\patchcmd{\@settitle}{\uppercasenonmath\@title}{}{}{}
\patchcmd{\@setauthors}{\MakeUppercase}{}{}{}
\patchcmd{\section}{\scshape}{}{}{}
\newtheorem{thm}{Theorem}[section]
\newtheorem{prop}{Proposition}[section]
\newtheorem{defi}{Definition}[section]
\newtheorem{lem}{Lemma}[section]
\newtheorem{rem}{Remark}[section]
\newtheorem*{notation}{Notation}
\theoremstyle{definition}
\newtheorem*{result}{Statement of main results}
\newtheorem*{structure}{Structure of the paper}
\newtheorem*{ack}{Acknowledgements}
\newcommand{\R}{\mathbb{R}}
\numberwithin{equation}{section}
\newcommand{\N}{\mathbb{N}}
\newcommand{\eps}{\varepsilon}
\newcommand{\leqnomode}{\tagsleft@true}
\newcommand{\reqnomode}{\tagsleft@false}
\begin{document}

\title[Normalized solutions to the CSS system]{ \Large Normalized solutions to the Chern-Simons-Schr\"odinger system}\thanks{*Corresponding author}

\author{
    {{\normalsize{Tianxiang Gou}}$^{a}$, {\normalsize{Zhitao Zhang}}$^{a, b,*}$}\bigskip \\
    $^{a}$ {\it HLM, CEMS, Academy of Mathematics and Systems Science, Chinese Academy of Science, \\
    Beijing 100190, China} \smallskip \\
    $^{b}$ {\it School of Mathematical Sciences, University of Chinese Academy of Sciences,\\
    Beijing 100049, China} \smallskip \\
    {\it E-mail addresses}: tianxiang.gou@amss.ac.cn (T. Gou), zzt@math.ac.cn (Z. Zhang)}

\maketitle
\begin{abstract}

\vspace{0.2cm}

In this paper, we study normalized solutions to the Chern-Simons-Schr\"odinger system, which is a gauge-covariant nonlinear Schr\"odinger system with a long-range electromagnetic field, arising in nonrelativistic quantum mechanics theory. The solutions correspond to critical points of the underlying energy functional subject to the $L^2$-norm constraint. Our research covers several aspects. Firstly, in the mass subcritical case, we establish the compactness of any minimizing sequence to the associated global minimization problem. As a by-product of the compactness of any minimizing sequence, orbital stability of the set of minimizers to the minimization problem is achieved. In addition, we discuss the radial symmetry and uniqueness of minimizer to the minimization problem. Secondly, in the mass critical case, we investigate the existence and nonexistence of normalized solutions. Finally, in the mass supercritical case, we prove the existence of ground state and infinitely many radially symmetric solutions. Moreover, orbital instability of ground states is explored. \medskip

{\it Keywords:} Chern-Simons-Schr\"odinger system; Normalized solutions; Standing waves; Orbital instability; Variational methods. \medskip

{\it MSC:}  35J20, 35J61,35B06, 35B35, 35B38, 35B40.

\end{abstract}

\section{Introduction}

\reqnomode

In this paper, we are concerned with the following time-dependent Chern-Simons-Schr\"odinger system in two spatial dimension,
\begin{align} \label{sys1}
\left\{
\begin{aligned} 
i D_t \varphi + \left(D_1 D_1 +D_2 D_2\right)\varphi &=-\lambda |\varphi|^{p-2}\varphi,\\ 
\partial_t A_1-\partial_1 A_0&=-\mbox{Im}\left(\bar{\varphi} \,D_2 \varphi\right), \\ 
\partial_t A_2-\partial_2 A_0&=\mbox{Im}\left(\bar{\varphi} \,D_1 \varphi\right), \\  
\partial_1 A_2-\partial_2 A_1&=- 1/2 \,|\varphi|^2,
\end{aligned}
\right.
\end{align}
where $i$ denotes the imaginary unit, $\varphi: \R \times \R^2 \to \mathbb{C}$ is complex scalar field, $\lambda >0$ is a coupling constant representing the strength of interaction potential, $D_t, D_{j}$ are covariant derivative operators defined by
$$
D_{t}:=\partial_{t} + i A_{0}, \quad D_{j}:=\partial_{j} + i A_{j},
$$
and $A_{0}, A_j : \R \times \R^2 \to \R$ are gauge fields for $j=1, 2$. The system \eqref{sys1} as a gauge-covariant nonlinear Schr\"odinger system is a nonrelativistic quantum model. It is used to describe the dynamics of a large number of particles in the plane interacting both directly and via a self-generated field. The system \eqref{sys1} plays an important role in the search of the high temperature superconductivity, fractional quantum Hall effect and Aharovnov-Bohm scattering, see \cite{EHI, EHI1, JaPi, JaPi1, JaPi2} for more physical information.

Observe that the system \eqref{sys1} is invariant under the following gauge transformation,
$$
\varphi \to \varphi e^{i \chi}, \quad A_0 \to A_0-\partial_t \chi, \quad A_{j} \to A_{j}-\partial_{j} \chi,
$$
where $\chi: \R \times \R^2 \to \R$ is a smooth function and $j=1, 2$. This means that if $(\varphi, A_0, A_1, A_2)$ is a solution to the system \eqref{sys1}, so is
$$
(\varphi e^{i \chi}, A_0-\partial_{t} \chi, A_1-\partial_{1} \chi, A_2-\partial_{2} \chi).
$$
For this reason, in order that the evolution of the system \eqref{sys1} is well-defined, it is required to fix the gauges. To do this, we shall impose the Coulomb gauge condition
\begin{align} \label{gauge}
\partial_1 A_1 + \partial_2 A_2 =0.
\end{align}
With the Coulomb gauge condition \eqref{gauge}, the system \eqref{sys1} leads to
\begin{align} \label{defA}
\left\{
\begin{aligned}
-\Delta A_1&=-1/2 \,\partial_2\left(|\varphi|^2 \right), \\
-\Delta A_2&=1/2 \, \partial_1\left(|\varphi|^2 \right), \\
-\Delta A_0&=\mbox{Im}\left(\partial_2 \bar{\varphi} \,\partial_1 \varphi- \partial_1 \bar{\varphi} \, \partial_2\varphi\right) -\partial_1\left(A_2|\varphi|^2 \right) +\partial_2\left(A_1|\varphi|^2 \right).
\end{aligned}
\right.
\end{align}
In Coulomb gauge, we see that $A_1, A_2, A_0$ are determined in terms of $\varphi$ by solving the elliptic system \eqref{defA}. This then implies that the problem under consideration admits a proper variational structure and variational methods come into play in our survey. Let us also mention another choice to fix the gauges, which consists in introducing the heat gauge condition
$$
A_0=\partial_1A_1 + \partial_2 A_2.
$$
In heat gauge, $A_1, A_2, A_0$ are determined in terms of $\varphi$ by solving heat equations, see \cite{LiSmTa}. This shows that it seems impossible to investigate \eqref{sys1} from variational perspectives. The research of the system \eqref{sys1} under the heat gauge condition is beyond the scope of this paper, we refer the readers to \cite{Dem} for more relevant information.

The intention of this paper is to consider standing waves to the system \eqref{sys1}, which are solutions of the form
$$
\varphi(t, x)= e^{i\alpha t} u(x),
$$
where the frequency $\alpha \in \R$ and $u$ is complex-valued. In light of \eqref{defA}, this ansatz then gives rise to the following stationary system satisfied by $u$,
\begin{align} \label{system1}
\left\{
\begin{aligned}
-\left(D_1D_1 +D_2D_2\right) u + A_0 u + \alpha u &=\lambda |u|^{p-2}u, \\
\partial_1 A_0&= \mbox{Im} \left( \overline{u} \,D_2 u \right),\\
\partial_2 A_0&=-\mbox{Im}\left(\overline{u} \,D_1 u\right), \\
\partial_1 A_2-\partial_2 A_1&=-1/2\, |u|^2.
\end{aligned}
\right.
\end{align}
In view of the definitions of $D_1$ and $D_2$, it follows from the system \eqref{defA} that
\begin{align} \label{AA} \tag{A}
\left\{
\begin{aligned}
A_1&:=A_1(u)= -\frac 12\,G_2 * |u|^2, \,\,\, A_2:=A_2(u)=\frac 12\,G_1 * |u|^2, \\
A_0&:=A_0(u)=-G_1* \left(\mbox{Im} \left( \overline{u} \,D_2 u \right)\right)+G_2* \left(\mbox{Im}\left(\overline{u} \,D_1 u\right)\right),
\end{aligned}
\right.
\end{align}
where the symbol $*$ represents the convolution of two functions in $\R^2$, and the convolution kernels $G_j$ are defined by
$$
G_j(x):=-\frac{1}{2 \pi} \frac{x_j}{|x|^2} \,\,\, \mbox{for} \, \, \, j=1,2.
$$

In order to explore solutions to the system \eqref{system1}, we would like to mention two substantially distinct options in terms of the frequency $\alpha$. The first one is to fix the frequency $\alpha \in \R$. In this situation, any solution to the system \eqref{system1} corresponds to a critical point of the energy function $J$ on $H^1(\R^2)$, where
$$
J(u):=\frac 12 \int_{\R^2} |D_1 u|^2 + |D_2 u|^2\, dx + \frac{\alpha}{2} \int_{\R^2}|u|^2 \, dx - \frac {\lambda}{p} \int_{\R^2}|u|^p \, dx.
$$
As far as we know, there already exist many works towards this direction. Assuming $\alpha >0$ and imposing the following gauge fields,
\begin{align} \label{gc} \tag{${A}_{r}$}
A_0=k(|x|), \quad A_1=\frac{x_2}{|x|^2} h(|x|), \quad  A_2=-\frac{x_1}{|x|^2} h(|x|),
\end{align}
where $k, h$ are real-valued, and $h(0)=0$, the authors in \cite{BHS} dealt with the existence and nonexistence of solutions to the system \eqref{system1} for any $p>2$. Successively, the author in \cite{huh} established the existence of infinitely many solutions to the system \eqref{system1} for any $p>6$, and the authors in \cite{PoRu} considered the existence and nonexistence of positive solutions for any $2<p<4$. In \cite{CdPS}, the system \eqref{system1} equipped with a general nonlinearity was investigated. For more related research, the readers can refer to \cite{BHS1, HuSe1, JPR, PoRu1, WaTa, WaTa1, WaTa2} and references therein.

Alternatively, it is of great interest to study solutions to the system \eqref{system1} having prescribed $L^2$-norm, namely, for any given $c>0$, to consider solutions to the system \eqref{system1} under the $L^2$-norm constraint
\begin{align} \label{mass}
S(c):=\{u \in H^1(\R^2): \int_{\R^2}|u|^2 \, dx =c\}.
\end{align}
Physically, such solutions are so-called normalized solutions to the system \eqref{system1}, which formally correspond to critical points of the energy functional $E$ restricted on $S(c)$, where
\begin{align*}
E(u):=\frac 12 \int_{\R^2} |D_1 u|^2 + |D_2 u|^2\, dx - \frac {\lambda}{p} \int_{\R^2}|u|^p \, dx.
\end{align*}
It is worth pointing out that, in this situation, the frequency $\alpha \in \R$ is an unknown part, which is determined as the Lagrange multiplier associated to the constraint $S(c)$.

From a physical point of view, it is quite meaningful to explore normalized solutions to the system \eqref{system1}. This is not only because $L^2$-norm of solution to the Cauchy problem of the system \eqref{sys1} is conserved along time, that is, for any $t \in \R$,
$$
\int_{\R^2} |\varphi(x, t)|^2 \, dx =\int_{\R^2} |\varphi(x, 0)|^2 \, dx,
$$
see Lemma \ref{wellposed}, but also because it can provide a good insight of the dynamical properties (orbital stability and instability) of solutions to the system \eqref{system1}. Consequently, the aim of the present paper is to consider normalized solutions to the system \eqref{system1}.

For further clarification, we agree that mass subcritical case, mass critical case and mass supercritical case mean that $2<p<4$, $p=4$ and $p>4$, respectively.

Regarding the study of normalized solutions to the system \eqref{system1}, to the best of our knowledge, there are quite few results. So far, we are only aware of two articles \cite{LiLuo,Yuan} in this direstion, where the authors principally concerned the existence of real-valued normalized solutions to the system \eqref{system1} in the mass supercritical case. Let us emphasise that both of the studies were carried out in the radially symmetric context and under the gauge fields \eqref{gc}. It seems that normalized solutions to the system \eqref{system1} are far from being well understood. Hence, in the present paper, we shall more completely and deeply investigate normalized solutions to the system \eqref{system1} in the mass subcritical case, in the mass critical case as well as in the mass supercritical case. Our research, which is performed in more general setting, covers several interesting aspects.

We now highlight a few features of our survey. Firstly, we consider complex-valued solutions to \eqref{system1}-\eqref{mass} under the more natural gauge fields \eqref{AA}, which is quite interesting in physics and lays a foundation to stduy dynamical behaviors of solutions to the Cauchy problem of the system \eqref{sys1}. In fact, when one is interested in real-valued solutions to \eqref{system1}-\eqref{mass}, then the underlying energy functional $E$ is reduced to the following version,
$$
E(u)=\frac 12 \int_{\R^2}|\nabla u|^2 + \left(A_1^2+A_2^2\right) |u|^2 \,dx - \frac {\lambda}{p} \int_{\R^2}|u|^p \, dx.
$$
In this context, the problem under consideration is somewhat simplified. Secondly, we explore the existence of ground states to \eqref{system1}-\eqref{mass} in $H^1(\R^2)$, instead of in the radially symmetric functions subspace $H_{rad}^1(\R^2)$. Note that the embedding $H^1(\R^2)\hookrightarrow L^t(\R^2)$ for any $t\geq 2$ is only continuous, which makes more hard to check the compactness of the associated sequences in our situation. To overcome the difficulty of lack of compactness, some fresh arguments are proposed in this paper. Thirdly, we also focus on the study of the radial symmetry, uniqueness and dynamical behaviors of solutions to \eqref{system1}-\eqref{mass}, which is new as far as we know.

\begin{rem}
Let us mention that some strategies we use to discuss normalized solutions to the system \eqref{system1} are robust, which may be applicable to consider normalized solutions to other Schr\"odinger-type problems.
\end{rem}

In contrast with the search of solutions to the system \eqref{system1} when the frequency $\alpha \in \R$ is a prior given, the search of normalized solutions becomes more complex and involved. In the normalized setting, when $p>4$, the boundedness of any Palais-Smale sequence in $H^1(\R^2)$ is not guaranteed. However, for the unconstrained issues, when $p>4$, the boundedness of any Palais-Smale sequence is easy to achieve. Furthermore, since the parameter $\alpha$ as the associated Lagrange multiplier is an unknown part, then we have to control the sign of $\alpha$ in order to derive the compactness of the associated Palais-Smale sequences, which is pivotal but delicate to handle. For the same reason, the approach of Nehari manifold, which plays a crucial role in treating unconstrained issues, is unavailable, thus we need to rely on the corresponding Pohozaev manifold to investigate normalized solutions in the mass supercritical case. As we shall see, the verification of the assertion that the Pohozaev manifold is a natural constraint is more complicated in our settting.

Before stating our main results, let us provide the readers with some references regarding the study of normalized solutions to Schr\"odinger-type equations and systems. In the mass subcritical, following the pioneer works \cite{Li1, Li2}, there exist numerous papers considering the compactness of the associated minimizing sequences, for instance \cite{AlBh, CCWei, CDSS, CaLi, TGou, GoJe, NW1, NW2, NW3, Shibata} and references therein. In the mass critical case, many researchers mainly concerned concentration of normalized solutions, for instance \cite{GLWZ1, GLWZ2, GS, GZZ1, GZZ2}. In the mass supercritical case, starting with the early work \cite{Jeanjean}, the study of normalized solutions has received much attention recently. For example, see \cite{AcWe, BaVa, BaJean, BeJeLu, BCGJ, CiJe, JeLuWa, So1,So2} for normalized solutions to equations in the whole space $\R^N$, see \cite{BaJe,BaJeSo, BaSo1, BaSo} for normalized solutions to systems set in $\R^N$, and see \cite{NoTaVe, NoTaVe1, NoTaVe2, PiVe} for normalized solutions to equations and systems confined to bounded domains in $\R^N$.

\begin{result}
Above all, for any $t>0$ and $u \in S(c)$, we introduce a scaling of $u$ as $u_t(x):=tu(tx)$ for $x \in \R^N$. Let us remind that such a scaling is rather useful, which will be frequently used throughout the paper. By simple calculations, see Lemma \ref{escaling}, then $u_t \in S(c)$ and
\begin{align} \label{scaling}
E(u_t)= \frac {t^2}{2} \int_{\R^2}|D_1 u|^2 + |D_2 u|^2 \, dx - \frac{\lambda t^{p-2}}{p} \int_{\R^2}|u|^p \, dx.
\end{align}

Firstly, we shall study solutions to \eqref{system1}-\eqref{mass} in the mass subcritical case. In this case, as a consequence of the Gagliardo-Nirenberg type inequality \eqref{MGN}, we have that the energy functional $E$ restricted on the constraint $S(c)$ is bounded from below. This then leads to the following global minimization problem,
\begin{align} \label{gmin}
m(c):=\inf_{u \in S(c)}E(u).
\end{align}
Since $2<p<4$, from \eqref{scaling}, then $E(u_t)<0$ for any $t>0$ small enough, this infers that $m(c)<0$ for any $c>0$. Moreover, it is clear that every minimizer to \eqref{gmin} is a solution to \eqref{system1}-\eqref{mass}. In this case, we first have the following compactness result.

\begin{thm}\label{compactness}
Assume $2 <p<4$, then there exists a constant $c_0>0$ such that, for any $0 < c <c_0$, every minimizing sequence to \eqref{gmin} is compact in $H^1(\R^2)$ up to translations. In particular, for any $0<c<c_0$, \eqref{system1}-\eqref{mass} admits a solution.
\end{thm}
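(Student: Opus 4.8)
The plan is to run a concentration--compactness argument on a minimizing sequence, exploiting that $m(c)<0$ (read off from the scaling \eqref{scaling} with $t$ small) and that $E$ is bounded below on $S(c)$ by the Gagliardo--Nirenberg type inequality. Fix a minimizing sequence $\{u_n\}\subset S(c)$ with $E(u_n)\to m(c)$. The argument divides into three stages: establish boundedness of $\{u_n\}$ in $H^1(\R^2)$; exclude vanishing and dichotomy of the mass densities $\rho_n:=|u_n|^2$; and finally upgrade the resulting weak limit to a strong limit and identify it as a minimizer, hence, through the Lagrange multiplier, a solution of \eqref{system1}--\eqref{mass}.

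For boundedness I would first use the diamagnetic inequality $|\nabla|u||\le(|D_1u|^2+|D_2u|^2)^{1/2}$ together with the planar Gagliardo--Nirenberg inequality to obtain $\int_{\R^2}|u_n|^p\,dx\le C\big(\int_{\R^2}|D_1u_n|^2+|D_2u_n|^2\,dx\big)^{(p-2)/2}c$. Since $2<p<4$ gives $(p-2)/2<1$, inserting this into $E(u_n)\ge\tfrac12\int_{\R^2}|D_1u_n|^2+|D_2u_n|^2\,dx-\tfrac{\la}{p}\int_{\R^2}|u_n|^p\,dx$ shows the magnetic kinetic energy $\int_{\R^2}|D_1u_n|^2+|D_2u_n|^2\,dx$ is bounded. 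To pass to a bound on $\|\nabla u_n\|_{L^2}$ I would expand $\int_{\R^2}|D_1u|^2+|D_2u|^2\,dx=\|\nabla u\|_{L^2}^2+\int_{\R^2}(A_1^2+A_2^2)|u|^2\,dx+2\int_{\R^2}\big(A_1\,\mbox{Im}(\bar u\pa_1 u)+A_2\,\mbox{Im}(\bar u\pa_2 u)\big)\,dx$ and treat the magnetic terms perturbatively: Hardy--Littlewood--Sobolev applied to the kernels $G_j$ together with Gagliardo--Nirenberg yields $\int_{\R^2}(A_1^2+A_2^2)|u|^2\,dx\le C\,c^{2}\,\|\nabla u\|_{L^2}^{2}$, and the cross term is then controlled by $C\,c\,\|\nabla u\|_{L^2}^2$. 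This is exactly where the threshold $c_0$ enters: for $c<c_0$ these contributions are absorbed, giving $\|\nabla u_n\|_{L^2}^2\le C\int_{\R^2}|D_1u_n|^2+|D_2u_n|^2\,dx$, so $\{u_n\}$ is bounded in $H^1(\R^2)$.

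With boundedness I would apply Lions' concentration--compactness lemma to $\rho_n$. Vanishing is excluded at once: it would force $u_n\to0$ in $L^p(\R^2)$, whence $\liminf_n E(u_n)\ge0>m(c)$, a contradiction. The genuine obstacle is dichotomy, and I expect it to be the main difficulty; I would rule it out by proving the strict subadditivity $m(c)<m(a)+m(c-a)$ for all $0<a<c$. The non-strict inequality follows by placing near-minimizers for $m(a)$ and $m(c-a)$ far apart; the delicate point here is that the gauge potentials $A_j=\pm\tfrac12 G_j*|u|^2$ are \emph{long range}, decaying only like $|x|^{-1}$, so one must verify that the interaction contributions to $\int|u|^p$ and, more seriously, to the nonlocal magnetic energy $\int(A_1^2+A_2^2)|u|^2\,dx$ and the cross term still vanish as the separation tends to infinity; this rests on the decay $|A_j(u)(x)|\lesssim\|u\|_{L^2}^2|x|^{-1}$ of the far field tested against the localized mass of the other bump. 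For the strict inequality I would first establish $m(\theta c)<\theta m(c)$ for $\theta>1$, from which strict subadditivity follows by the standard summation $m(a)+m(c-a)>\tfrac{a}{c}m(c)+\tfrac{c-a}{c}m(c)=m(c)$ (using $m(c)<0$). Since the magnetic energy is of higher (sixth) order in $u$ while the decisive balance in the mass subcritical regime is the quadratic/$L^p$ one, I expect $m(\theta c)<\theta m(c)$ to hold perturbatively once $c_0$ is chosen small enough, the super-linear growth of the magnetic energy under amplitude scaling being dominated by the strict gain produced by the subcritical nonlinearity.

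Having excluded vanishing and dichotomy, concentration holds: there are $y_n\in\R^2$ such that $u_n(\cdot-y_n)$ is tight, so along a subsequence $u_n(\cdot-y_n)\rightharpoonup u$ weakly in $H^1(\R^2)$ with $u\neq0$. A Brezis--Lieb type decomposition---which I would also need to prove for the nonlocal magnetic functional, again exploiting the $|x|^{-1}$ decay to split $\int(A_1^2+A_2^2)|u|^2\,dx$---combined with the strict subadditivity forces no loss of mass, so $\|u\|_{L^2}^2=c$ and $E(u)=m(c)$. Strong $L^p$ convergence (from tightness and the compact embedding on balls) together with $E(u)=m(c)=\lim_n E(u_n)$ then yields convergence of the magnetic kinetic energies, hence of $\|\nabla u_n\|_{L^2}$, which with weak convergence promotes the convergence to strong convergence in $H^1(\R^2)$. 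Being a minimizer of $E$ on $S(c)$, $u$ is a constrained critical point, so there exists a Lagrange multiplier $\al\in\R$ with $E'(u)=\al u$, i.e. $u$ solves \eqref{system1}--\eqref{mass}. I expect the two hardest points to be the strict subadditivity and the Brezis--Lieb splitting, both rendered delicate precisely by the long-range nature of the field and both controlled by restricting to the small-mass window $c<c_0$.
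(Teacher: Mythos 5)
Your overall architecture (boundedness, exclusion of vanishing via $m(c)<0$, exclusion of dichotomy via strict subadditivity, then upgrading weak to strong convergence and invoking a Lagrange multiplier) matches the paper's proof, and the boundedness and vanishing steps are fine (in fact boundedness holds for \emph{all} $c>0$ in the paper, via \eqref{bdd}--\eqref{bdd1}; smallness of $c$ is not needed there). The genuine gap is exactly at the step you yourself flag as the main difficulty: the strict subadditivity $m(c)<m(c_1)+m(c_2)$. Your proposed route --- prove $m(\theta c)<\theta m(c)$ for $\theta>1$ ``perturbatively'' for small $c$ and then sum --- does not go through, and the paper explicitly warns that scaling techniques fail here. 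Concretely: under the amplitude scaling $u\mapsto\sqrt{\theta}\,u$ one has $A_j(\sqrt{\theta}u)=\theta A_j(u)$, so the sextic term scales as $\theta^{3}\int(A_1^2+A_2^2)|u|^2\,dx$ and the cross term as $\theta^{2}\,\mbox{Im}\int(A_1\partial_1u+A_2\partial_2u)\overline{u}\,dx$, while the gain from the nonlinearity is only $\theta^{p/2}$ with $p/2<2$ since $p<4$. Thus the wrong-signed nonlocal contributions grow \emph{faster} than the term you are counting on to produce strictness; the ``strict gain produced by the subcritical nonlinearity'' does not dominate. If instead you try to treat the nonlocal terms as $O(c)$ perturbations (which is what the rescaling \eqref{sc} and the reduced problem \eqref{mini} make precise), a second obstruction appears: the strict gap you need degenerates as $\theta\to1^+$, i.e.\ precisely when one of the two masses in the splitting $c=c_1+c_2$ is small. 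For fixed $\theta>1$ one can indeed absorb the $O(c)$ errors for $c$ small depending on $\theta$, but excluding dichotomy requires the strict inequality for \emph{all} splittings, including $c_2\to0^+$ (equivalently $\xi\to1$ in the notation \eqref{strictineq}), and no single threshold $c_0$ produced by such a perturbative argument covers that endpoint regime; note also that the quantitative lower bound $\|u_n\|_p^p\gtrsim|m(c)|$ along minimizing sequences itself degenerates as $c\to0$.

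This endpoint degeneration is exactly why the paper's proof is structured as a non-perturbative contradiction argument (following \cite{CDSS}). Assuming equality $e^{c_n}(1)=e^{c_n}(\xi_n)+e^{c_n}(1-\xi_n)$ along $c_n\to0$, the case $\xi_n\to\xi_0<1$ is dispatched by passing to the limit problem $e^0$, where strict subadditivity \eqref{strict1} is classical; but the case $\xi_n\to1$ requires new information: one first shows the sub-problem at mass $\xi_n$ admits a minimizer $w_n$, proves $w_n\to w$ where $w$ solves the limit NLS equation \eqref{eq2} with Lagrange multiplier $\alpha>0$ (positivity coming from the Pohozaev identity), and then contradicts the difference-quotient identity \eqref{id}: its right-hand side is $o_n(1)$ by the scaling relation \eqref{scal1}, while the left-hand side is $\leq-\alpha/2+o_n(1)$ by \eqref{contr2}. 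In other words, what saves the argument near the endpoint is first-order (multiplier) information about the limiting minimizer, which no energy-smallness perturbation can see. Without this (or some substitute), your exclusion of dichotomy --- and hence the whole compactness claim --- is not established.
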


In order to achieve the compactness of every minimizing sequence to \eqref{gmin} in $H^1(\R^2)$, we shall apply the well-known Lions concentration compactness principle \cite{Li1, Li2}, it suffices to rule out vanishing and dichotomy. Note that $2<p<4$ and $m(c)<0$ for any $c>0$, hence vanishing can be easily excluded via the Lions concentration compactness Lemma \cite[Lemma I.1]{Li2}. So as to exclude dichotomy, we need to establish the following strict subadditivity inequality,
\begin{align} \label{strictsub1}
m(c) < m(c_1)+m(c_2)
\end{align}
for any $0<c_1, c_2<c$, and $c_1+c_2=c$. However, in our scenario, scaling techniques are no longer enough to verify \eqref{strictsub1}, because of the presence of the nonlocal terms $\int_{\R^2} \left(A_1^2+A_2^2\right) |u|^2 \,dx$ and $\mbox{Im} \int_{\R^2}\left( A_1 \partial_1 u + A_2 \partial_2 u\right)\overline{u}\,dx$. In such a situation, we shall adopt some ideas developed in \cite{CDSS} to prove that \eqref{strictsub1} is valid for any $c>0$ small. As we shall see, the proof does not rely on perturbative arguments and it is highly nontrivial.

\begin{rem}
Since the possibility of dichotomy is delicate to deal with, then, for any $c>0$ large, the compactness of every minimizing sequence to \eqref{gmin} in $H^1(\R^2)$ up to translations remains open.  In addition, for any $c>0$ large, the existence of minimizers to \eqref{gmin} is unknown so far.
\end{rem}

As a direct application of Theorem \ref{compactness}, by using the elements in \cite{CaLi}, we are able to derive orbital stability of the set of minimizers to \eqref{gmin}. For any $0<c<c_0$, let us define the set
$$
G(c):=\{u \in S(c): E(u)=m(c)\}.
$$
By Theorem \ref{compactness}, we know that $G(c) \neq \emptyset$. More precisely, we have the following result.

\begin{thm} \label{stable}
Assume $2 <p<4$, then, for any $0<c<c_0$, the set $G(c)$ is orbitally stable, namely, for any $\eps>0$, there exists a constant $\delta>0$ so that if $\varphi_0 \in H^1(\R^2)$ satisfies
\begin{equation*}
\inf_{u\in G(c)} \|\varphi_0 - u\|\leq \delta,
\end{equation*}
then
\begin{equation*}
\sup_{t \in [0, \infty)} \inf_{u\in G(c)}
\|\varphi(t)-u\| \leq \eps,
\end{equation*}
where $\varphi \in C([0, \infty); H^1(\R^2))$ is the solution to the Cauchy problem of the system \eqref{sys1} with $\varphi(0)=\varphi_0$, and $\|\cdot\|$ denotes the standard norm in $H^1(\R^2)$.
\end{thm}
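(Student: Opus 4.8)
The plan is to argue by contradiction using the now-classical strategy of Cazenave and Lions \cite{CaLi}, whose engine is precisely the compactness of minimizing sequences furnished by Theorem \ref{compactness}, together with the conservation of mass and energy along the flow of \eqref{sys1} recorded in Lemma \ref{wellposed}. Suppose $G(c)$ were not orbitally stable for some $0<c<c_0$. Then there would exist $\eps_0>0$, a sequence of initial data $\varphi_{0,n}\in H^1(\R^2)$ with $\inf_{u\in G(c)}\|\varphi_{0,n}-u\|\to 0$, and times $t_n\geq 0$ such that the corresponding global solutions $\varphi_n\in C([0,\infty);H^1(\R^2))$ satisfy $\inf_{u\in G(c)}\|\varphi_n(t_n)-u\|\geq\eps_0$. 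My goal would be to show that $(\varphi_n(t_n))$ is, after a harmless normalization, a minimizing sequence for \eqref{gmin}, whence Theorem \ref{compactness} forces it back into a neighbourhood of $G(c)$ and contradicts the lower bound $\eps_0$.

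First I would pin down the two endpoints. Pick $u_n\in G(c)$ with $\|\varphi_{0,n}-u_n\|\to 0$; since $u_n\in S(c)$ and $E(u_n)=m(c)$, continuity of the mass and of the energy functional $E$ on $H^1(\R^2)$ gives $\|\varphi_{0,n}\|_2^2\to c$ and $E(\varphi_{0,n})\to m(c)$. Conservation of mass and of energy under the flow then yields $\|\varphi_n(t_n)\|_2^2=\|\varphi_{0,n}\|_2^2\to c$ and $E(\varphi_n(t_n))=E(\varphi_{0,n})\to m(c)$. Using the Gagliardo-Nirenberg type inequality \eqref{MGN} exactly as in the proof that $m(c)$ is finite, the bound on $E(\varphi_n(t_n))$ together with the bounded mass makes $(\varphi_n(t_n))$ bounded in $H^1(\R^2)$. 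Setting $v_n:=(c/\|\varphi_n(t_n)\|_2^2)^{1/2}\varphi_n(t_n)$ puts each $v_n$ on $S(c)$, and because the rescaling factor tends to $1$ while $(\varphi_n(t_n))$ is $H^1$-bounded, $\|v_n-\varphi_n(t_n)\|\to 0$; continuity of $E$ on bounded sets then promotes $E(v_n)\to m(c)$, so $(v_n)$ is a minimizing sequence for \eqref{gmin}.

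At this point Theorem \ref{compactness} applies: along a subsequence there are translations $y_n\in\R^2$ and $u\in H^1(\R^2)$ with $v_n(\cdot-y_n)\to u$ in $H^1(\R^2)$, and continuity of $E$ and of the mass identify $u\in G(c)$. Since $G(c)$ is invariant under translations, $u(\cdot+y_n)\in G(c)$, and translation invariance of the norm gives $\inf_{w\in G(c)}\|v_n-w\|\leq\|v_n-u(\cdot+y_n)\|=\|v_n(\cdot-y_n)-u\|\to 0$. Combining this with $\|v_n-\varphi_n(t_n)\|\to 0$ produces $\inf_{w\in G(c)}\|\varphi_n(t_n)-w\|\to 0$, contradicting $\inf_{u\in G(c)}\|\varphi_n(t_n)-u\|\geq\eps_0$ and closing the argument.

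The genuinely delicate points are not in the contradiction scheme itself but in the inputs it consumes. The decisive one is the conservation of the covariant energy $E$ along the $H^1$-flow of \eqref{sys1}, which hinges on the global well-posedness and the Hamiltonian structure encoded in Lemma \ref{wellposed}; I would make sure the conserved energy appearing there is exactly the $E$ restricted to $S(c)$. The second is the continuity and coercivity of $E$ with respect to the $H^1$-norm despite the nonlocal, singular dependence of $A_1,A_2$ on $u$ through convolution with $G_1,G_2$: these are the same estimates already needed to make $m(c)$ well defined and to run Theorem \ref{compactness}, so I would simply invoke them. The remaining bookkeeping---the rescaling onto $S(c)$ and the translation argument transferring ``compact up to translations'' into ``close to the translation-invariant set $G(c)$''---is routine once those analytic facts are in hand.
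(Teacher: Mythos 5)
Your proposal is correct and is essentially the paper's own proof: the same Cazenave--Lions contradiction argument, the same renormalization of $\varphi_n(t_n)$ onto $S(c)$, the same use of the conservation laws from Lemma \ref{wellposed} to produce a minimizing sequence for \eqref{gmin}, and the same invocation of Theorem \ref{compactness} to contradict the assumed lower bound $\eps_0$. The only difference is that you spell out the bookkeeping (boundedness in $H^1$, convergence of the rescaling factor, the translation-invariance step) that the paper leaves implicit.
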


\begin{rem}
When $2<p<4$, the global well-posedness to the Cauchy problem of the system \eqref{sys1} in $H^1(\R^2)$ is addressed in Theorem \ref{globalexis}.
\end{rem}

Additionally, we are interested in the radial symmetry and uniqueness of minimizer to \eqref{gmin}. For this subject, we obtain the following statement.

\begin{thm} \label{symmetry}
Assume $2<p<4$, then there exists a constant $\tilde{c}>0$ with $0<\tilde{c} \leq c_0$ such that, for any $0<c < \tilde{c}$, every minimizer to \eqref{gmin} is radially symmetric up to translations, and minimizer to \eqref{gmin} is unique up to translations, where the constant $c_0$ is given by Theorem \ref{compactness}.
\end{thm}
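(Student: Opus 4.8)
The plan is to exploit the fact that, as the mass $c \to 0$, the nonlocal gauge contribution becomes a genuinely lower-order perturbation, so that \eqref{gmin} degenerates into the minimization problem for the pure Schr\"odinger functional, whose minimizer is the ground state of $-\Delta Q + Q = \lambda |Q|^{p-2}Q$ (after scaling). Since this ground state is radially symmetric, unique up to translation and, crucially, nondegenerate, I would transfer these properties to \eqref{gmin} for $c$ small by a rescaling-and-linearization argument. The logical backbone is: first establish local uniqueness (up to translation and a constant phase), and then deduce radial symmetry for free from that uniqueness together with the rotation invariance of $E$.

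First I would quantify the smallness of the gauge term. Writing $A_1 = -\tfrac12\partial_2 w$ and $A_2 = \tfrac12\partial_1 w$ with $-\Delta w = |u|^2$, and using the scaling \eqref{scaling} together with the profile of the limiting minimizer, one checks via the Hardy--Littlewood--Sobolev inequality that the gauge energy is $O(c^2)$ relative to the kinetic energy. Concretely, I would rescale a minimizer $u_c$ to $v_c(x) := \theta_c^{-1} u_c(\beta_c^{-1}x)$, with amplitude $\theta_c$ and width $\beta_c$ dictated by the limiting equation, so that $v_c$ has a fixed mass independent of $c$ and solves the Euler--Lagrange system of a $c$-perturbed Schr\"odinger equation. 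Feeding the compactness machinery already used for Theorem \ref{compactness} (exclusion of vanishing and dichotomy through the strict subadditivity \eqref{strictsub1}) into this rescaled picture, and using the $O(c^2)$ smallness to annihilate the perturbation in the limit, I would show that $v_c \to Q$ strongly in $H^1(\R^2)$ up to translation and phase, with the Lagrange multiplier converging to that of $Q$.

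Uniqueness then follows by a contradiction-plus-nondegeneracy argument, and this is where the real work lies. Assuming two minimizers $u_c^{(1)}, u_c^{(2)}$ distinct modulo translation and phase along a sequence $c_n \to 0$, I rescale both to $v_n^{(1)}, v_n^{(2)} \to Q$, normalize the difference $z_n := (v_n^{(1)}-v_n^{(2)})/\|v_n^{(1)}-v_n^{(2)}\|$, and pass to the limit in the equation it satisfies. The limit lies in the kernel of the linearization of the limiting Schr\"odinger equation at $Q$, which by the classical nondegeneracy of the ground state is spanned exactly by the symmetry directions $iQ$, $\partial_1 Q$, $\partial_2 Q$ (phase and translations); the orthogonality and centering normalizations I impose exclude all of these, forcing $z_n \to 0$ and contradicting $\|z_n\|=1$. \textbf{The main obstacle} is to prove that the nonlocal, sextic gauge term and its linearization form a \emph{uniformly} lower-order (compact) perturbation of the Schr\"odinger operator in the rescaled variables, so that they neither enlarge the limiting kernel nor spoil the uniform invertibility of the linearized operator on the orthogonal complement of its kernel; this demands HLS/Sobolev estimates on the convolution operators $G_j * (\cdot)$ from \eqref{AA} that are uniform as $c\to 0$, together with control of the convergence of the frequencies.

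Finally, radial symmetry is deduced from uniqueness. Since $E$ is invariant under rotations (the kernels $G_j$ transform as a vector, so the convolution structure in \eqref{AA} is $O(2)$-equivariant and the gauge energy is rotation invariant), for any rotation $R$ the function $u_c(R\,\cdot)$ is again a minimizer, hence equals $u_c$ up to translation and a constant phase by the uniqueness just obtained. Normalizing $u_c$ by its center of mass and fixing the phase, the translation is forced to vanish, so $u_c(R\,\cdot) = u_c$ for every $R \in O(2)$, i.e.\ $u_c$ is radially symmetric up to translation; radiality in turn renders the optimal phase constant, so the minimizer is a constant phase times a real positive radial function, consistent with the stated uniqueness up to translation. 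Setting $\tilde c := \min\{c_0, c_*\}$, where $c_*$ is the smallness threshold produced by the uniqueness argument and $c_0$ comes from Theorem \ref{compactness}, yields the claim.
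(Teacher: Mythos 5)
Your proposal is correct in outline but follows a genuinely different route from the paper. After the same preliminary rescaling (the paper's functional $\mathcal{E}^c$, in which the quartic gauge term carries a factor $c^2$ and the mixed term $c\,\mathrm{Im}\int_{\R^2}(A_1\partial_1u+A_2\partial_2u)\overline{u}\,dx$ carries a factor $c$ --- so the perturbation is $O(c)$, not $O(c^2)$, though this changes nothing), the paper never forms normalized differences: it runs a Lyapunov--Schmidt/implicit-function-theorem scheme in three steps --- a decomposition $u=Q_{\tau(u)}+R(u)$ with $R(u)\in T_{Q_{\tau(u)}}^{\bot}$ for $u$ near $Q$, the construction of a \emph{radial} solution branch $w(c,\alpha)\in H^1_{rad}(\R^2)$ bifurcating from $Q$, and a local uniqueness statement for the conditions $P(u)=Q$, $\pi_{T_Q^{\bot}}(\nabla_u\Gamma^c(u))=0$ --- and then obtains radial symmetry \emph{first}, by matching every minimizer with the radial branch, uniqueness falling out of the same local-uniqueness step. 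You reverse the order: uniqueness by compactness-contradiction plus nondegeneracy, then symmetry from the rotation invariance of $E$ (Lemma \ref{invariant}) acting on the orbit of the now-unique minimizer. Both routes hinge on the same two inputs (strong convergence of rescaled minimizers to $Q$, and nondegeneracy of $Q$), and you correctly identify the main technical burden, namely uniform compactness estimates for the nonlocal gauge terms as $c\to0$. A genuine merit of your formulation is that it treats the phase invariance honestly: since $E(e^{i\theta}u)=E(u)$, uniqueness can only hold modulo translations \emph{and} constant phases, which is what you prove; the paper's Steps 1 and 4 tacitly place minimizers in a ball $B_\eps(Q)$ after a translation only, with no phase adjustment.

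Two steps in your sketch need more than you give them. First, in the uniqueness argument the two minimizers carry different Lagrange multipliers, so the weak limit $z$ of $z_n$ does not lie in the kernel of the linearization: writing $L_+h=-\Delta h+\alpha_0h-\lambda(p-1)Q^{p-2}h$ and $L_-h=-\Delta h+\alpha_0h-\lambda Q^{p-2}h$, one gets $L_+(\mathrm{Re}\,z)=\mu Q$ and $L_-(\mathrm{Im}\,z)=0$, where $\mu$ is the limit of $(\alpha_n^{(1)}-\alpha_n^{(2)})/\|v_n^{(1)}-v_n^{(2)}\|$, whose boundedness must itself be proved. To kill $\mu$ one combines the equal-mass relation $\int_{\R^2}Q\,\mathrm{Re}\,z\,dx=0$ with the scaling identity $L_+(\Lambda Q)=-2\alpha_0Q$, $\Lambda Q:=\tfrac{2}{p-2}Q+x\cdot\nabla Q$, and the fact that $\int_{\R^2}Q\,\Lambda Q\,dx=\tfrac{4-p}{p-2}\|Q\|_2^2\neq0$; note this is precisely where $p<4$ enters. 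Second, in the symmetry step, centering kills the translation but not the phase: you only obtain $u_c(R_\phi\,\cdot)=e^{i\theta_\phi}u_c$ with $\phi\mapsto e^{i\theta_\phi}$ a continuous character of $SO(2)$, hence $\theta_\phi=k\phi$ for some $k\in\Z$; if $k\neq0$ then $u_c=f(r)e^{ik\vartheta}$ is a vortex, which is not radial even though $|u_c|$ is. You must exclude $k\neq0$, e.g.\ by noting that a vortex has vanishing angular average at every radius, so that $\inf_\theta\|u_c-e^{i\theta}Q\|_2\geq\|Q\|_2$ by Cauchy--Schwarz on circles, contradicting the convergence of $u_c$ to $Q$ modulo phase. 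With these two (standard, fixable) repairs your argument closes and yields the theorem in its corrected form, i.e.\ uniqueness up to translations and phases.
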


Inspired by \cite{GPV}, the proof of this theorem relies in an essential way on the implicit function theorem.

\begin{rem}
Motivated by \cite{BeGh}, for any $c>0$ large, we conjecture that a symmetry breaking phenomenon to \eqref{gmin} may occur provided that \eqref{gmin} admits a minimizer. To our knowledge, this is a tough problem, even if in the framework of real-valued functions Sobolev space.
\end{rem}

Secondly, we shall consider solutions to \eqref{system1}-\eqref{mass} in the mass critical case. In this case, the regime of the parameter $\lambda$ plays an important role in the existence of solutions to \eqref{system1}-\eqref{mass}. Our main result reads as follows.

\begin{thm}\label{nonexistence}
Assume $p=4$.
\begin{itemize}
\item [(i)] If $\lambda<1$, then $m(c)=0$ for any $c >0$, and $m(c)$ cannot be attained for any $c >0$. Furthermore, \eqref{system1}-\eqref{mass} has no solution for any $c >0$.
\item [(ii)] If $\lambda =1$, then $m(c)=0$ for any $c >0$, and $m(c)$ is attained only for $c=8 \pi$ and every minimizer $u$ has the form
\begin{align} \label{u}
u(x)=\frac{4 \sqrt{2} \mu}{4 + \mu^2|x-x_0|^2},
\end{align}
where $\mu>0$ and $ x_0 \in \R^2$. Furthermore, \eqref{system1}-\eqref{mass} admits a solution if and only if $c=8\pi$.
\item [(iii)] If $\lambda >1$, then there exists a constant $c^* >0$ such that $m(c)=0$ for any $0<c \leq c^*$, and $m(c)$ cannot be attained for any $0<c<c^*$, it is attained for $c=c^*$, where the constant $c^*$ is defined by
\begin{align} \label{mins}
c^*:= \inf_{u \in \mathcal{P}} \int_{\R^2} |u|^2 \, dx,  \,\,\, \mbox{and}\,\,\, \mathcal{P}:=\{u \in H^1(\R^2)\backslash\{0\} : E(u)=0\}.
\end{align}
Furthermore, \eqref{system1}-\eqref{mass} has no solution for any $0<c < c^*$, and it has a solution for $c=c^*$.
\end{itemize}
\end{thm}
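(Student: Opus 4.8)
The plan is to reduce everything to two structural facts. The first is the mass-critical scaling: for $p=4$ the identity \eqref{scaling} gives $E(u_t)=t^2E(u)$, so the sign of $E$ is invariant along $u_t\in S(c)$; consequently $m(c)=0$ whenever $E\ge0$ on $S(c)$ (let $t\to0^+$) and $m(c)=-\infty$ otherwise, and every constrained critical point $u$ obeys the Pohozaev identity $0=\frac{d}{dt}E(u_t)\big|_{t=1}=2E(u)$, i.e. $E(u)=0$. The second is a Bogomol'nyi-type factorization: using the field equation $\partial_1A_2-\partial_2A_1=-\tfrac12|u|^2$ and the integration by parts $\int_{\R^2}2\,\mathrm{Im}(D_1u\,\overline{D_2u})\,dx=\int_{\R^2}(\partial_1A_2-\partial_2A_1)|u|^2\,dx$, one gets
\[
\int_{\R^2}|D_1u|^2+|D_2u|^2\,dx=\int_{\R^2}|(D_1+iD_2)u|^2\,dx+\frac12\int_{\R^2}|u|^4\,dx,
\]
hence for $p=4$
\[
E(u)=\frac12\int_{\R^2}|(D_1+iD_2)u|^2\,dx+\frac{1-\lambda}{4}\int_{\R^2}|u|^4\,dx.
\]
For part (i), $\lambda<1$ makes the coefficient $\frac{1-\lambda}{4}>0$, so this factorization forces $E(u)>0$ for all $u\neq0$; thus $m(c)\ge0$, the scaling gives $m(c)=0$, and it cannot be attained on $S(c)$ since $E>0$ there. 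Any solution would be a constrained critical point with $E(u)=0$, impossible for $u\neq0$, so there is no solution.

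For part (ii), $\lambda=1$ reduces the factorization to $E(u)=\frac12\int_{\R^2}|(D_1+iD_2)u|^2\,dx\ge0$, whence $m(c)=0$ for all $c>0$, and $E(u)=0$ is equivalent to the self-dual equation $(D_1+iD_2)u=0$. I would then show that, in the Coulomb gauge with $A_1,A_2$ as in \eqref{AA}, this equation forces $v:=|u|^2$ to solve the Liouville equation $-\Delta\log v=v$; the classification of finite-mass solutions yields exactly $v(x)=32\mu^2/(4+\mu^2|x-x_0|^2)^2$, which integrates to $8\pi$ and, after fixing the gauge/phase, recovers the profile \eqref{u}. Therefore $m(c)$ is attained only at $c=8\pi$. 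Since any solution is a constrained critical point with $E(u)=0$, hence self-dual, \eqref{system1}-\eqref{mass} is solvable precisely when $c=8\pi$.

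For part (iii), $\lambda>1$ makes $E$ sign-indefinite and the threshold is governed by $\mathcal{P}$ and $c^*$ as in \eqref{mins}. First I would locate $c^*\in(0,8\pi)$: for $u\in\mathcal{P}$ one has $\int_{\R^2}|u|^4\,dx=\frac2\lambda\int_{\R^2}|D_1u|^2+|D_2u|^2\,dx$, so \eqref{MGN} gives $\|u\|_2^2\ge 8\pi/\lambda>0$, whence $c^*>0$; while the self-dual profile $Q$ of part (ii) satisfies $E(Q)<0$, and along the mass dilation $Q^\tau(x):=Q(x/\tau)$ one computes $E(Q^\tau)\to\frac12\int_{\R^2}|\nabla Q|^2\,dx>0$ as $\tau\to0^+$, so $E(Q^{\tau_0})=0$ for some $\tau_0<1$, giving $\mathcal{P}\neq\emptyset$ and $c^*<8\pi$. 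The same dilation proves $m(c)=0$ for $0<c\le c^*$: if some $u\in S(c)$ with $c\le c^*$ had $E(u)<0$, then $E(u^\tau)$ passes continuously from positive (as $\tau\to0^+$) to negative (at $\tau=1$), producing an element of $\mathcal{P}$ of mass $\tau_*^2c<c^*$, contradicting the definition of $c^*$; combined with the scaling this yields $m(c)=0$. Non-attainment of $m(c)$ and non-existence of solutions for $0<c<c^*$ then both follow, since a minimizer, respectively a solution (via Pohozaev $E(u)=0$), would lie in $\mathcal{P}$ with mass below $c^*$.

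It remains to prove attainment at $c=c^*$, and this is the hard part. I would take a minimizing sequence for \eqref{mins} and show it converges, up to translation and a subsequence, to some $u^*\in\mathcal{P}$ with $\|u^*\|_2^2=c^*$; since $m(c^*)=0$, such $u^*$ minimizes $E$ on $S(c^*)$ and is hence a solution. The obstacle is compactness: because $H^1(\R^2)\hookrightarrow L^4(\R^2)$ is only continuous and the potentials enter through the nonlocal maps $A_j=A_j(u)$, ruling out vanishing and dichotomy and passing to the limit inside the quartic and the covariant kinetic terms is delicate and needs the refined arguments announced in the introduction. Two secondary technical points also have to be settled: establishing the sharp inequality \eqref{MGN} with optimal constant realized by the self-dual profiles, and carrying out the Liouville reduction of part (ii) rigorously, including the treatment of the phase and of the possible zeros of $u$.
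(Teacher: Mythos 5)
Your overall architecture matches the paper's: the self-dual factorization \eqref{xx} settles (i) and reduces (ii) to the equation $(D_1+iD_2)u=0$, the Pohozaev identity of Lemma \ref{Ph} (for $p=4$, $Q(u)=2E(u)$) converts ``solution'' into ``$E(u)=0$'', and (iii) hinges on the minimization \eqref{mins}. Your soft arguments for (iii) are correct and in places more explicit than the paper: the dilation $u^\tau(x)=u(x/\tau)$, under which $E(u^\tau)\to\frac12\|\nabla u\|_2^2>0$ as $\tau\to0^+$, cleanly yields $\mathcal{P}\neq\emptyset$, $c^*<8\pi$, $m(c)=0$ for $c\le c^*$, and non-attainment/non-existence below $c^*$, all of which the paper compresses into ``from the definition of $c^*$'' (the paper instead produces elements of $\mathcal{P}$ by the multiplication $\theta u$, $\theta\in(0,1]$; both devices work). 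One inaccuracy: the bound $\|u\|_2^2\ge 8\pi/\lambda$ for $u\in\mathcal{P}$ would need a sharp-constant version of \eqref{MGN} (whose optimizers are not known to be the self-dual profiles); this is neither available nor needed, since the non-sharp \eqref{MGN} already gives $\|u\|_2^2\ge C/\lambda>0$, which is all that positivity of $c^*$ requires.

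The genuine gap is that the two hard cores of the theorem are announced rather than proved. First and foremost, attainment at $c=c^*$: this is where the paper spends essentially all of its effort. Concretely, the paper (a) renormalizes a minimizing sequence $\{v_n\}\subset\mathcal{P}$ by $u_n(x):=\eps_n v_n(\eps_n x)$ with $\eps_n^2=\|\nabla v_n\|_2^{-2}$, which stays in $\mathcal{P}$ because $E(u_t)=t^2E(u)$ when $p=4$, and fixes $\|\nabla u_n\|_2=1$; (b) shows $\liminf_{n}\|u_n\|_4^4>0$, since otherwise the nonlocal terms vanish by Lemma \ref{Aineq} and $E(u_n)=0$ would contradict $\|\nabla u_n\|_2=1$; (c) excludes vanishing by Lions' lemma; and (d) excludes dichotomy by a cut-off decomposition $u_n=u_{1,n}+u_{2,n}$, the splitting $E(u_n)\ge E(u_{1,n})+E(u_{2,n})+o_n(1)$ of the nonlocal energies (estimates of the type in Lemma \ref{subadd}), and then the key contradiction: whichever piece has $\limsup E\le 0$ can be rescaled by some $\theta_n\in(0,1]$ into $\mathcal{P}$ with mass $\le c_0+o_n(1)<c^*$, violating minimality; the same rescaling also finishes the argument after passing to the weak limit. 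You actually hold this rescaling tool in hand (it is your argument for $m(c)=0$ for $c\le c^*$), but you never deploy it inside the compactness proof, and the splitting of the covariant and nonlocal terms, which is the technically delicate point you flag as an ``obstacle'', is left entirely open. Second, in (ii) the classification is also deferred: the paper must show that $(D_1+iD_2)u=0$ forces $u$ to be real-valued, upgrade regularity by Br\'ezis--Kato, use the maximum principle to show $u$ has no zeros, and only then invoke the Chen--Li classification of $-\Delta v=e^v$ to obtain \eqref{u} and the value $c=8\pi$; without these steps the ``only if'' half of (ii) is not established. As written, your proposal fully proves (i), but only the easy halves of (ii) and (iii).
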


In this case, the authors in \cite{LiLuo} only considered the existence of minimizers to \eqref{gmin} for $\lambda=1$ by restricting in the real-valued and radially symmetric Sobolev space. As we have seen, in our paper, we systematically and deeply investigate solutions to \eqref{system1}-\eqref{mass} in the more general Sobolev space.

When $p=4$, $\lambda=1$, if $E(u)=0$, one shall find that $u$ enjoys the following first order equation called self-dual equation,
\begin{align} \label{self}
D_1 u+i\, D_2 u=0.
\end{align}
The equation \eqref{self} can be transformed into the Loiuville equation, an integrable equation whose solutions are explicitly given.

When $p=4$, $\lambda>1$, in order to solve the minimization problem \eqref{mins}, we shall use a slight variant of the classical Lions concentration compactness principle from \cite{Li1, Li2}.

\begin{rem}
Note that Theorem \ref{nonexistence}, when $p=4$, $\lambda>1$, the problem has not been completely handled yet. In this case, it turns out that there is a constant $c_1 \geq c^*$ such that $m(c)=-\infty$ for any $c>c_1$, see Remark \ref{massinfty}, however, it is unclear if $c_1=c^*$, we conjecture this is the case. In addition, it is an open question to prove that \eqref{system1}-\eqref{mass} admits no solution for any $c >c^*$.
\end{rem}

Thirdly, we shall study solutions to \eqref{system1}-\eqref{mass} in the mass supercritical case. In this case, the energy functional $E$ restricted on $S(c)$ becomes unbounded from below for any $c>0$. Indeed, since $p>4$, one then easily gets from \eqref{scaling} that $E(u_t) \to -\infty$ as $t \to \infty$, which infers that $E$ restricted on $S(c)$ is unbounded from below. For this reason, it is unlikely to catch a solution to \eqref{system1}-\eqref{mass} by developing a global minimization problem, which causes our research more complex. In this situation, in order to seek for solutions to \eqref{system1}-\eqref{mass}, we shall introduce the following minimization problem,
\begin{align} \label{min}
\gamma(c):=\min_{u \in \mathcal{M}(c)} E(u),
\end{align}
where the constraint $\mathcal{M}(c)$ is defined by
\begin{align*}
\mathcal{M}(c):=\{u \in S(c): Q(u)=0\},
\end{align*}
and the functional $Q$ is defined by
\begin{align}  \label{defq}
Q(u):= \int_{\R^2} |D_1 u|^2 + |D_2 u|^2\, dx -\frac { \lambda(p-2)}{p} \int_{\R^2}|u|^p \, dx.
\end{align}
Indeed, the identity $Q(u)=0$ is the Pohozaev identity to the system \eqref{system1}, see Lemma \ref{Ph}. The constraint $\mathcal{M}(c)$ is the so-called Pohozaev manifold related to \eqref{system1}-\eqref{mass}. From Lemma \ref{ps}, one can find that $\mathcal{M}(c)$ is a natural constraint, which suggests that any minimizer to \eqref{min} is a ground state to \eqref{system1}-\eqref{mass} in the sense that it is a solution to \eqref{system1}-\eqref{mass} and minimizes the energy functional $E$ among all solutions with the same $L^2$-norm. \medskip

In this case, we first establish the existence of ground state to \eqref{system1}-\eqref{mass}.

\begin{thm} \label{existence}
Assume $p>4$, then there exists a constant $\hat{c}>0$ such that, for any $0<c< \hat{c}$, \eqref{system1}-\eqref{mass} possesses a ground state $u_c \in S(c)$ with $E(u_c)=\gamma(c)$.
\end{thm}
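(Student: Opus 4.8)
The plan is to realize $\gamma(c)$ as a constrained minimum over the Pohozaev manifold and to extract a ground state from a Palais--Smale sequence by a concentration--compactness analysis, the whole argument being conditioned on $c$ being small enough to control the Lagrange multiplier. The starting point is the fiber map $t\mapsto E(u_t)$ for fixed $u\in S(c)$. Writing $A(u):=\int_{\R^2}|D_1u|^2+|D_2u|^2\,dx$ and $B(u):=\int_{\R^2}|u|^p\,dx$, formula \eqref{scaling} gives $E(u_t)=\tfrac{t^2}{2}A(u)-\tfrac{\lambda t^{p-2}}{p}B(u)$; since $p>4$ this is positive and increasing near $t=0$, tends to $-\infty$ as $t\to\infty$, and possesses a unique critical point $t(u)>0$, which is its strict global maximum. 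Because $\tfrac{d}{dt}E(u_t)\big|_{t=1}=Q(u)$ with $Q$ as in \eqref{defq}, membership in $\mathcal{M}(c)$ is precisely the condition $t(u)=1$; hence $u\mapsto u_{t(u)}$ sends $S(c)$ onto $\mathcal{M}(c)$ (so $\mathcal{M}(c)\neq\emptyset$) and produces the minimax description $\gamma(c)=\inf_{u\in S(c)}\max_{t>0}E(u_t)$. On $\mathcal{M}(c)$ the relation $Q(u)=0$ yields $E(u)=\tfrac{p-4}{2(p-2)}A(u)$, while inserting $Q(u)=0$ into the Gagliardo--Nirenberg type inequality \eqref{MGN} gives a uniform lower bound $A(u)\geq\delta(c)>0$; together these show $\gamma(c)>0$.

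Next I would produce a convenient minimizing sequence. Since $\mathcal{M}(c)$ is a natural constraint (Lemma \ref{ps}), Ekeland's variational principle on $\mathcal{M}(c)$---or, equivalently, the minimax scheme applied to $(u,t)\mapsto E(u_t)$ in the spirit of Jeanjean---yields $\{u_n\}\subset\mathcal{M}(c)$ with $E(u_n)\to\gamma(c)$, $Q(u_n)\to0$ and $(E|_{S(c)})'(u_n)\to0$; the last property provides Lagrange multipliers $\alpha_n\in\R$ with $-(D_1D_1+D_2D_2)u_n+A_0(u_n)u_n+\alpha_nu_n-\lambda|u_n|^{p-2}u_n\to0$ in $H^{-1}(\R^2)$. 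The identity $E(u_n)=\tfrac{p-4}{2(p-2)}A(u_n)+o(1)$ bounds $A(u_n)$, and combining this with the mass constraint, the diamagnetic inequality and the standard estimates for the potentials $A_1(u),A_2(u)$ in \eqref{AA} gives boundedness of $\{u_n\}$ in $H^1(\R^2)$.

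The crux is the sign of the multiplier together with compactness in the non-radial space. Testing the approximate equation against $u_n$ and eliminating $B(u_n)$ through $Q(u_n)\to0$ leads to
\[
\alpha_n\,c=\frac{2\lambda}{p}B(u_n)-\int_{\R^2}A_0(u_n)|u_n|^2\,dx+o(1),
\]
and for $c<\hat{c}$ small the gauge term on the right is dominated by $\tfrac{2\lambda}{p}B(u_n)$ (which is bounded below, since $B(u_n)=\tfrac{p}{\lambda(p-2)}A(u_n)\geq\tfrac{p}{\lambda(p-2)}\delta(c)$), forcing $\alpha_n\to\alpha>0$. The lower bound on $B(u_n)$ rules out vanishing via the Lions lemma, so after a translation $u_n\rightharpoonup u\neq0$ weakly in $H^1(\R^2)$; passing to the limit in the equation shows that $u$ solves \eqref{system1} with frequency $\alpha>0$ and satisfies $Q(u)=0$. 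Finally, using $\alpha>0$ and a suitable monotonicity or strict-inequality property of $c\mapsto\gamma(c)$ to preclude any loss of mass, one obtains $\|u\|_{L^2}^2=c$ and $u_n\to u$ strongly in $H^1(\R^2)$ with $E(u)=\gamma(c)$; the natural constraint property then upgrades $u$ to a ground state.

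The step I expect to be hardest is this last compactness argument in the full space $H^1(\R^2)$, where no compact Sobolev embedding is available and the nonlocal quantities $\int_{\R^2}(A_1^2+A_2^2)|u|^2\,dx$ and $\mathrm{Im}\int_{\R^2}(A_1\partial_1u+A_2\partial_2u)\overline{u}\,dx$ must be shown to converge without leaking mass; the companion difficulty is pinning down the sign of $\alpha$, which is exactly what dictates the smallness restriction $c<\hat{c}$.
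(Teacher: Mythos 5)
Your proposal is correct and follows essentially the same route as the paper: the fiber map and Pohozaev manifold giving $\gamma(c)=\inf_{u\in S(c)}\max_{t>0}E(u_t)$ with $\gamma(c)>0$ and coercivity (Lemmas \ref{unique}--\ref{coercive}), a Palais--Smale sequence in $\mathcal{M}(c)$ from the minimax scheme (Lemma \ref{ps}), a nontrivial weak limit solving the equation with multiplier $\alpha_c>0$ for small $c$ (Lemmas \ref{pre} and \ref{c}), and the nonincreasing property of $c\mapsto\gamma(c)$ together with the Brezis--Lieb type splitting to rule out loss of mass and recover strong $H^1$ convergence (Lemmas \ref{nonincreasing} and \ref{lpconv}). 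The only point to tighten is your justification of the sign of $\alpha_n$: the domination of $\int_{\R^2}A_0(u_n)|u_n|^2\,dx$ by $\tfrac{2\lambda}{p}B(u_n)$ does not follow from the lower bound $B(u_n)\geq \tfrac{p}{\lambda(p-2)}\delta(c)$ alone, since the gauge term can be large when $\|\nabla u_n\|_2$ is; the correct argument, as in Lemma \ref{c}, rewrites $\int_{\R^2}A_0(u_n)|u_n|^2\,dx$ via the identity \eqref{Aid} and bounds it by $C(\eps+c^2)\|\nabla u_n\|_2^2$ using \eqref{csmall} and \eqref{bdd1}, so that both sides of the comparison scale like $\|\nabla u_n\|_2^2$ and smallness of $c$ alone closes the estimate.
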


Let us briefly sketch the proof of Theorem \ref{existence}. To begin with, according to Lemma \ref{ps}, we can obtain a Palais-Smale sequence $\{u_n\} \subset \mathcal{M}(c)$ for the energy functional $E$ restricted on $S(c)$ at the level $\gamma(c)$. Later, by Lemma \ref{pre}, there exist a constant $\alpha_c \in \R$ and a nontrivial $u_c \in H^1(\R^2)$ as the weak limit of $\{u_n\}$ in $H^1(\R^2)$ satisfying the equation
$$
-\left(D_1D_1 + D_2D_2\right) u_c +  A_0 u_c +\alpha_c u_c = \lambda |u_c|^{p-2} u_c.
$$
Finally, since the function $c \mapsto \gamma(c)$ is nonincreasing on $(0, \infty)$, see Lemma \ref{nonincreasing}, we then conclude from Lemma \ref{lpconv} that $\gamma(c)=E(u_c)$ and $u_n \to u_c$ in $L^p(\R^2)$ as $n \to \infty$. This together with the fact that $\alpha_c >0$ for any $0<c< \hat{c}$, see Lemma \ref{alpha}, shows that $\|u_n-u_c\|=o_n(1)$ and $u_c \in S(c)$ is a ground state to \eqref{system1}-\eqref{mass}. Thus the proof is completed.

\begin{rem}
As we have already noticed, in Theorem \ref{existence}, it is required that the mass $c>0$ is small, which is only to ensure that the Lagrange multiplier $\alpha_c>0$, see Lemma \ref{c}. This is sufficient to achieve the compactness of the associated Palais-Smale sequence. As matter of fact, $\alpha_c \geq 0$ is always the case for any $c >0$, see Proposition \ref{nonnegative}. Thereby an interesting but challengeable issue is to prove the existence of solution to \eqref{system1}-\eqref{mass} for any $c>0$ large. To do this, one needs to study the limit equation
\begin{align} \label{limitequ}
-\left(D_1D_1 + D_2D_2\right) u +  A_0 u= \lambda |u|^{p-2} u.
\end{align}
If one can show that the equation \eqref{limitequ} has no solution in $H^1(\R^2)$, which in turn results that $\alpha_c > 0$ for any $c>0$ large, then the issue is successfully solved. Indeed, we expect that the equation \eqref{limitequ} has no solutions in $H^1(\R^2)$, however, the verification of this assertion is far from easy.
\end{rem}

By means of genus theory due to M.A. Krasnosel'skii, see \cite{AmMa}, we obtain the existence of infinitely many radially symmetric solutions to \eqref{system1}-\eqref{mass}. Here we define $\mathcal{M}_{rad}(c):=\mathcal{M}(c) \cap H^1_{rad}(\R^2)$, and $S_{rad}(c):=S(c) \cap H^1_{rad}(\R^2)$.

\begin{thm} \label{infsolutions}
Assume $p>4$, then, for any $0<c< \hat{c}$, \eqref{system1}-\eqref{mass} admits infinitely many radially symmetric solutions, where the constant $\hat{c}$ is given in Theorem \ref{existence}.
\end{thm}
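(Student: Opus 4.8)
The plan is to work entirely in the radial subspace $H^1_{rad}(\R^2)$, where the embedding $H^1_{rad}(\R^2)\hookrightarrow L^p(\R^2)$ is compact for every $p>2$, and to exploit the $\Z_2$-symmetry $u\mapsto -u$. Since the gauge potentials $A_0,A_1,A_2$ in \eqref{AA} depend on $u$ only through $|u|^2$ and its derivatives, one checks that $E$, $Q$ and the constraints $S(c)$, $\mathcal{M}(c)$ are all invariant under $u\mapsto-u$ and under the $O(2)$-action on $\R^2$; in particular $E$ is even. By the principle of symmetric criticality, any critical point of $E|_{S_{rad}(c)}$ is a critical point of $E|_{S(c)}$ and hence, together with the associated gauge fields, a solution to \eqref{system1}-\eqref{mass}. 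Because $\mathcal{M}(c)$ is a natural constraint (Lemma \ref{ps}), it suffices to produce infinitely many critical points of $E$ restricted to $\mathcal{M}_{rad}(c)$. On $\mathcal{M}(c)$ the identity $Q(u)=0$ forces $E(u)=\frac{p-4}{2(p-2)}\big(\|D_1u\|_2^2+\|D_2u\|_2^2\big)$, so $E$ is nonnegative and coercive with respect to the covariant gradient on $\mathcal{M}_{rad}(c)$, which is the source of the boundedness we will need.

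Next I would set up the $\Z_2$-genus minimax. For $u\in S_{rad}(c)$ the map $t\mapsto E(u_t)$ attains a unique strict maximum at some $t=t(u)>0$, characterized by $Q(u_{t(u)})=0$; in the mass supercritical regime this $t(u)$ solves $t^{p-4}=p\,(\|D_1u\|_2^2+\|D_2u\|_2^2)/(\lambda(p-2)\|u\|_p^p)$ and depends continuously and evenly on $u$. Hence $\phi(u):=u_{t(u)}$ defines a continuous odd map $\phi:S_{rad}(c)\to\mathcal{M}_{rad}(c)$ which is an odd homeomorphism onto $\mathcal{M}_{rad}(c)$ and therefore preserves the genus. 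Letting $\Gamma_k$ denote the family of compact symmetric sets $A=-A\subset\mathcal{M}_{rad}(c)$ with genus at least $k$, I would define
\begin{align*}
c_k:=\inf_{A\in\Gamma_k}\ \max_{u\in A} E(u).
\end{align*}
Nonemptiness of $\Gamma_k$ follows by taking a $k$-dimensional subspace $W_k\subset H^1_{rad}(\R^2)$, so that $W_k\cap S_{rad}(c)$ has genus $k$, and pushing it forward by the odd map $\phi$; genus monotonicity then gives $\Gamma_{k+1}\subset\Gamma_k$, whence $\gamma(c)\le c_k\le c_{k+1}$. Using the coercivity noted above, high genus forces a large covariant gradient somewhere on each admissible $A$, so that $c_k\to\infty$ as $k\to\infty$.

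The main obstacle is the Palais-Smale compactness at each level $c_k$, and this is where the hypothesis $c<\hat c$ enters. Following the scheme used for Theorem \ref{existence}, I would produce, via a minimax/deformation argument on the natural constraint $\mathcal{M}_{rad}(c)$ (equivalently on $S_{rad}(c)\times\R$ through the scaling $H(u,s)=u_{e^s}$, which converts constrained stationarity into the extra condition $Q\to0$), a bounded Palais-Smale sequence $\{u_n\}\subset\mathcal{M}_{rad}(c)$ for $E|_{S_{rad}(c)}$ at level $c_k$ with associated Lagrange multipliers $\{\alpha_n\}$. Boundedness comes from the coercivity of $E$ on $\mathcal{M}(c)$; passing to a weak limit $u_c$, the compact radial embedding forces $\|u_n\|_p\to\|u_c\|_p$ and makes the nonlocal gauge terms, which are controlled by subcritical $L^q$-norms, converge strongly, so $u_c\not\equiv0$. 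The delicate point, exactly as in Theorem \ref{existence}, is to control the sign of the multiplier: one must rule out loss of mass and show $\alpha_n\to\alpha_c>0$, which guarantees $u_c\in S(c)$ and upgrades weak to strong convergence of the covariant gradients. This is precisely the step that requires $c<\hat c$, through Lemma \ref{alpha} and Lemma \ref{c}, and it is the technical heart of the argument.

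Granting the Palais-Smale condition, the conclusion is standard genus theory \cite{AmMa}. The even functional $E$ restricted to the symmetric natural constraint $\mathcal{M}_{rad}(c)$ admits, along the deformation flow that decreases $E$ while keeping $Q=0$, the usual quantitative deformation lemma; hence each $c_k$ is a critical value of $E|_{\mathcal{M}_{rad}(c)}$. If $c_k=c_{k+1}=\cdots=c_{k+j}$ for some $j\ge1$, then the $\Z_2$-genus of the critical set at that common level is at least $j+1$, so that set is infinite. Combined with $c_k\to\infty$, this yields infinitely many distinct critical points of $E|_{\mathcal{M}_{rad}(c)}$, each of which, by symmetric criticality, gives a radially symmetric solution to \eqref{system1}-\eqref{mass}. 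This proves the theorem for every $0<c<\hat c$.
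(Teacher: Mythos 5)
Your proposal follows essentially the same route as the paper: a Krasnosel'skii genus minimax over compact symmetric subsets of $\mathcal{M}_{rad}(c)$, with nonemptiness of the genus-$k$ classes obtained from $k$-dimensional subspaces of $H^1_{rad}(\R^2)$ pushed forward by the odd continuous projection $u \mapsto u_{t_u}$, Palais-Smale sequences produced by an equivariant minimax principle (the paper invokes Ghoussoub's theorem for $\sigma$-homotopy stable families in Lemma \ref{ps1}), and compactness restored exactly as you describe, via Lemma \ref{pre}, the compact embedding $H^1_{rad}(\R^2)\hookrightarrow L^p(\R^2)$, and positivity of the Lagrange multiplier for $0<c<\hat{c}$ from Lemma \ref{c}. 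One small correction: your map $\phi(u)=u_{t(u)}$ is a continuous odd retraction onto $\mathcal{M}_{rad}(c)$, not a homeomorphism (it collapses each scaling orbit $\{u_t : t>0\}$ to a single point), but genus monotonicity only requires continuity and oddness, so the argument is unaffected.
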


When $p>4$, a similar result has been established  in \cite{LiLuo, Yuan}, where the authors discussed the existence of normalized solutions in the real-valued Sobolev space. Our Theorem \ref{infsolutions} is established in the complex-valued Sobolev space. As we shall see, in our situation, the proof of Theorem \ref{infsolutions} becomes complex, due to the presence of an additional term $\mbox{Im} \int_{\R^2}\left( A_1 \partial_1 u + A_2 \partial_2 u\right)\overline{u}\,dx$ in the energy functional.

The key argument to establish this theorem lies in verifying the compactness of the Palais-Smale sequence $\{u_n\} \subset \mathcal{M}_{rad}(c)$ for the energy functional $E$ restricted on $S_{rad}(c)$. The existence of such a Palais-Smale sequence $\{u_n\} \subset \mathcal{M}_{rad}(c)$ is insured by Lemma \ref{ps1}. At this point, one can follow the outline to prove Theorem \ref{existence} to complete the proof. We only need to point out that, in the present setting, the convergence of the sequence $\{u_n\}$ in $L^p(\R^2)$ is given for free, because the embedding $H_{rad}(\R^2) \hookrightarrow L^p(\R^2)$ is compact.

\begin{rem}
When $p>4$, it is also interesting to analyze the profile of the function $c \mapsto \gamma(c)$ for any $c>0$. The monotonicity of the function has already been revealed by Lemmas \ref{nonincreasing}-\ref{c}, and Proposition \ref{nonnegative}. One can check that the function $c \mapsto \gamma(c)$ is continuous for any $c>0$, and $\lim_{c \to 0^+} \gamma(c)=\infty$. In order to consider the properties of the function $c \mapsto \gamma(c)$ as $c$ goes to infinity, one needs to explore the existence of solution to the equation \eqref{limitequ} in some suitable Sobolev space. We would like to leave the research of this theme to the interested readers.
\end{rem}

Lastly, we shall study dynamical behaviors of ground states to \eqref{system1}-\eqref{mass}. To this end, we need to investigate the Cauchy problem of the system \eqref{sys1}.  For the case $p=4$, it seems first proved in \cite{BBS} that the Cauchy problem of the system \eqref{sys1} is locally well-posed in $H^2(\R^2)$. In the same paper, they also established global existence of solutions in $H^1(\R^2)$ for initial data having sufficiently small $L^2$ norm. However, the uniqueness of solutions in $H^1(\R^2)$ was unknown. Subsequently, unconditional uniqueness in $L^{\infty}_t H^1$ of solutions was obtained in \cite{huh2}. Recently, in \cite{Lim}, the author further demonstrated the local well-posedness of solutions with large initial data in $H^s(\R^2)$ for any $s \geq 1$. In this case, we also refer the readers to \cite{LiSm, LiSmTa} for the global well-posedness of solutions and scatter of solutions in $L^2(\R^2)$. For the case $p>2$, the local well-posedness of solutions in $H^1(\R^2)$ is given in forthcoming Lemma \ref{wellposed}, by which we are able to deduce the global well-posedness of solutions in $H^1(\R^2)$. Namely,

\begin{thm} \label{globalexis}
Let $\varphi_0 \in H^1(\R^2)$, either $2<p<4$, or $p=4$ and $\|\varphi_0\|_2$ is small enough, or else $p>4$ and $ \varphi_0 \in \mathcal{O}_c:=\{u \in S(c): E(u)<\gamma(c), Q(u)>0\}$, then the solution $\varphi \in C([0, T); H^1(\R^2))$ to the Cauchy problem of the system \eqref{sys1} with $\varphi(0)=\varphi_0$ exists globally in time.
\end{thm}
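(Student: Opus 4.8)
The plan is to combine the local well-posedness of Lemma~\ref{wellposed} with the two conserved quantities of the flow---the mass $\|\varphi(t)\|_2^2$ and the energy $E(\varphi(t))$---to produce an a priori bound on $\|\varphi(t)\|$ over the whole maximal interval of existence $[0,T_{\max})$. Lemma~\ref{wellposed} comes with the usual blow-up alternative: if $T_{\max}<\infty$, then $\|\varphi(t)\|\to\infty$ as $t\to T_{\max}^-$. Hence it suffices to prove $\sup_{t\in[0,T_{\max})}\|\varphi(t)\|<\infty$, which by mass conservation reduces to bounding $\|\nabla\varphi(t)\|_2$. I would route everything through the covariant kinetic energy $K(\varphi):=\int_{\R^2}|D_1\varphi|^2+|D_2\varphi|^2\,dx$.

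First I would bound $K(\varphi(t))$ uniformly in $t$, treating the three regimes separately. In the subcritical case $2<p<4$, the inequality \eqref{MGN} together with the diamagnetic inequality $|\nabla|\varphi||\le(|D_1\varphi|^2+|D_2\varphi|^2)^{1/2}$ gives $\|\varphi\|_p^p\le C\,K(\varphi)^{(p-2)/2}\|\varphi\|_2^2$ with exponent $(p-2)/2<1$; inserting this in $E(\varphi)=\tfrac12 K(\varphi)-\tfrac\lambda p\|\varphi\|_p^p$ and using Young's inequality shows that the sublevel sets of $E$ on $S(c)$ are bounded in $K$, so energy conservation forces $K(\varphi(t))\le K_0$ for a constant $K_0=K_0(E(\varphi_0),c)$. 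In the critical case $p=4$ the estimate is scaling-invariant, $\|\varphi\|_4^4\le C\,K(\varphi)\|\varphi\|_2^2$; when $\|\varphi_0\|_2^2$ is small enough that $\tfrac{\lambda C}{2}\|\varphi_0\|_2^2\le\tfrac12$, the nonlinear term is absorbed and $E(\varphi)\ge\tfrac14 K(\varphi)$, whence $K(\varphi(t))\le 4E(\varphi_0)$. In the supercritical case $p>4$ I would instead exploit $\mathcal{O}_c$, claiming it is flow-invariant. Since $E$ and the mass are conserved, the conditions $\varphi(t)\in S(c)$ and $E(\varphi(t))<\gamma(c)$ persist; and if $Q(\varphi(t))$ were to vanish at a first time $t_0$, then $\varphi(t_0)\in\mathcal{M}(c)$ would give $E(\varphi(t_0))\ge\gamma(c)$ by Lemma~\ref{ps}, contradicting $E(\varphi(t_0))=E(\varphi_0)<\gamma(c)$; hence $Q(\varphi(t))>0$ for all $t$. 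On $\mathcal{O}_c$ the relation $Q>0$ reads $\tfrac\lambda p\|\varphi\|_p^p<\tfrac1{p-2}K(\varphi)$, and inserting it in $E$ gives $E(\varphi)>\tfrac{p-4}{2(p-2)}K(\varphi)$, so $K(\varphi(t))\le\tfrac{2(p-2)}{p-4}E(\varphi_0)$, again a uniform bound.

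Next I would convert the bound on $K$ into a bound on the Euclidean gradient. Writing $\partial_j\varphi=D_j\varphi-iA_j\varphi$ gives $\|\nabla\varphi\|_2\le K(\varphi)^{1/2}+\big(\int_{\R^2}(A_1^2+A_2^2)|\varphi|^2\,dx\big)^{1/2}$, so it remains to control $N(\varphi):=\int_{\R^2}(A_1^2+A_2^2)|\varphi|^2\,dx$. Recalling from \eqref{AA} that $A_j=\pm\tfrac12 G_k*|\varphi|^2$ with $G_k$ homogeneous of degree $-1$, the Hardy--Littlewood--Sobolev inequality bounds $\|A_j\|_{L^r}$ by $\||\varphi|^2\|_{L^q}$ with $\tfrac1r=\tfrac1q-\tfrac12$; taking $q=4/3$ so that $A_j\in L^4$, then applying H\"older, the diamagnetic inequality and \eqref{MGN}, I expect the scaling-consistent estimate $N(\varphi)\le C\|\varphi\|_2^4\,K(\varphi)$. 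Combined with the previous step and mass conservation this yields $\|\nabla\varphi(t)\|_2\le K_0^{1/2}\big(1+\sqrt{C}\,c\big)$, a uniform $H^1$ bound, and the blow-up alternative then forces $T_{\max}=\infty$ in all three cases.

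I expect the main obstacle to be twofold. The genuinely delicate point is the supercritical case: proving that $\mathcal{O}_c$ is flow-invariant requires the continuity of $t\mapsto Q(\varphi(t))$ on $H^1$ and, crucially, the fact from Lemma~\ref{ps} that $\gamma(c)$ is the minimum of $E$ over the Pohozaev manifold $\mathcal{M}(c)$, so that the flow starting below $\gamma(c)$ cannot reach $\{Q=0\}$. The other technical core is the gauge-potential estimate $N(\varphi)\le C\|\varphi\|_2^4 K(\varphi)$, which is what makes the passage from the covariant to the Euclidean gradient uniform in $t$; note that the critical case is exactly the regime where the corresponding control is available only for small mass, which explains the hypothesis $\|\varphi_0\|_2$ small there, whereas coercivity of $E$ (subcritical) and the sign $Q>0$ (supercritical) supply the needed bound unconditionally.
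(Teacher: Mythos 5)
Your proposal is correct and follows essentially the same route as the paper: conservation of mass and energy combined with the magnetic Gagliardo--Nirenberg inequality \eqref{MGN} gives the uniform bound on $\|D_1\varphi(t)\|_2^2+\|D_2\varphi(t)\|_2^2$ when $2<p<4$ or $p=4$ with small mass, and for $p>4$ the identity $E(\varphi)-\frac{1}{p-2}Q(\varphi)=\frac{p-4}{2(p-2)}\left(\|D_1\varphi\|_2^2+\|D_2\varphi\|_2^2\right)$ together with $\gamma(c)=\inf_{\mathcal{M}(c)}E$ shows that $Q(\varphi(t))$ can never vanish below the level $\gamma(c)$, which is exactly the paper's contradiction argument (your citation of Lemma \ref{ps} for $E\geq\gamma(c)$ on $\mathcal{M}(c)$ should just be the definition \eqref{min}). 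Your final step converting the covariant bound into a Euclidean $H^1$ bound via $\int_{\R^2}(A_1^2+A_2^2)|\varphi|^2\,dx\leq C\|\varphi\|_2^4\left(\|D_1\varphi\|_2^2+\|D_2\varphi\|_2^2\right)$ is correct but unnecessary here, since the blow-up alternative in Lemma \ref{wellposed} is stated in terms of $\|D_1\varphi(t)\|_2+\|D_2\varphi(t)\|_2$ rather than the full $H^1$ norm.
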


\begin{rem}
For the sake of simplicity, in Theorem \ref{globalexis}, when $p=4$, we only show that if the $L^2$-norm of initial datum $\varphi_0$ is small enough, then the solution $\varphi$ exists globally in time. In fact, when $p=4$, $\lambda \geq 1$, we can find a sharp threshold for the global existence of solutions to the Cauchy problem of the system \eqref{sys1}. In other words, we have that if initial datum $\varphi_0\in H^1(\R^2)$ with $\|\varphi_0\|_2 <c^*$, then the solution $\varphi$ to the problem exists globally in time, while if initial datum $\varphi_0\in H^1(\R^2)$ with $\|\varphi_0\|_2 \geq c^*$, then the finite time blowup of the solution $\varphi$ to the problem may occur, where the constant $c^*$ is defined by \eqref{mins}. It is noticed that $c^*=8 \pi$ for $\lambda=1$. The proof of this argument shall be presented in future publication.
\end{rem}

When $2<p<4$, Theorem \ref{stable} indicates that the set of minimizers to \eqref{gmin} is orbitally stable. While $p > 4$, we shall prove that ground states to \eqref{system1}-\eqref{mass} are strongly unstable in the following sense.

\begin{defi}
We say that $u \in H^1(\R^2)$ is strongly unstable, if, for any $\eps >0$, there is $\varphi_0 \in H^1(\R^2)$ with $\|\varphi_0-u\| \leq \eps$ such that the solution $\varphi \in C([0, T); H^1(\R^2))$ to the Cauchy problem of the system \eqref{sys1} with $\varphi(0)=\varphi_0$ blows up in finite time.
\end{defi}

When $p >4$, we have the following result.

\begin{thm} \label{unstable}
Assume $p>4$. Let $u_c \in S(c)$ be a ground state to \eqref{system1}-\eqref{mass} obtained in Theorem \ref{existence}. If either $u_c$ belongs to $\Sigma:=\left\{ u \in H^1(\R^2): |xu| \in L^2(\R^2)\right\}$ or $u_c$ is radially symmetric and $p \leq 6$, then it is strongly unstable.
\end{thm}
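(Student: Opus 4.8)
The plan is to adapt the classical Berestycki--Cazenave instability mechanism to the normalized setting. The idea is to produce, arbitrarily close to $u_c$ in $H^1(\R^2)$, initial data lying in the set
\begin{align*}
\mathcal{B}(c):=\{u \in S(c): E(u)<\gamma(c),\ Q(u)<0\},
\end{align*}
and then to show that every solution of \eqref{sys1} issuing from $\mathcal{B}(c)$ blows up in finite time. For the perturbation I would use the scaling $u \mapsto u_t$ from \eqref{scaling}. Since $u_c$ is a ground state, Theorem \ref{existence} gives $Q(u_c)=0$ and $E(u_c)=\gamma(c)$; writing $g(t):=E((u_c)_t)$, one has from \eqref{scaling} that $g'(t)=\tfrac1t\,Q((u_c)_t)$, so that (using $p>4$) $g$ attains its unique strict maximum at $t=1$, with $g(t)<\gamma(c)$ for $t\neq1$ and $Q((u_c)_t)<0$ for $t>1$. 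As $t\to1^{+}$ one has $(u_c)_t\to u_c$ in $H^1(\R^2)$, hence $(u_c)_t\in\mathcal{B}(c)$ with $\|(u_c)_t-u_c\|\to0$; this furnishes the required family of perturbed data.

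Next I would prove that $\mathcal{B}(c)$ is invariant under the flow. Mass and energy are conserved (Lemma \ref{wellposed}), so membership in $S(c)$ and the condition $E<\gamma(c)$ persist. If $Q(\varphi(t))$ were to change sign, there would be a first time $t_\ast$ with $Q(\varphi(t_\ast))=0$; but then $\varphi(t_\ast)\in\mathcal{M}(c)$, whence $E(\varphi(t_\ast))\ge\gamma(c)$, contradicting $E(\varphi(t_\ast))=E(\varphi_0)<\gamma(c)$. Thus $Q(\varphi(t))<0$ throughout the existence interval. To turn this into blow-up I need a \emph{uniform} negative bound on $Q$: for any $u\in S(c)$ with $Q(u)<0$ there is a unique $t_0\in(0,1)$ with $Q(u_{t_0})=0$, so $u_{t_0}\in\mathcal{M}(c)$ and $E(u_{t_0})\ge\gamma(c)$; a direct computation with the explicit form of $g$ then yields a constant $C_0>0$, depending only on $p$, such that
\begin{align*}
Q(u)\le C_0\big(E(u)-\gamma(c)\big)\qquad\text{whenever } Q(u)<0.
\end{align*}
Along the flow $E(\varphi(t))\equiv E(\varphi_0)<\gamma(c)$, so this gives $Q(\varphi(t))\le C_0\big(E(\varphi_0)-\gamma(c)\big)=:-\delta<0$ uniformly in $t$.

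The final ingredient is the virial identity. When $u_c\in\Sigma$, setting $I(t):=\int_{\R^2}|x|^2|\varphi(t,x)|^2\,dx$, I would establish
\begin{align*}
I''(t)=8\,Q(\varphi(t)),
\end{align*}
the key point being that the gauge potentials in \eqref{AA}, built from $|\varphi|^2$, respect the scaling so that their contributions assemble exactly into $Q$. Together with the previous step this gives $I''(t)\le-8\delta<0$, forcing $I$ to become negative in finite time, which is impossible; hence the maximal existence time is finite and $\varphi$ blows up. When $u_c$ is merely radial and $2<p\le6$, finite variance is unavailable, so I would instead run the Ogawa--Tsutsumi localized virial argument: replace $|x|^2$ by a radial cutoff $\psi_R(r)=R^2\psi(r/R)$ and differentiate $I_R(t):=\int\psi_R|\varphi|^2\,dx$ twice, obtaining $I_R''(t)=8\,Q(\varphi(t))+\mathcal{E}_R(t)$, where $\mathcal{E}_R$ collects boundary errors from the nonlinearity and the gauge terms. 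For radial functions these are controlled via the Strauss estimate $\|u\|_{L^\infty(|x|\ge R)}\lesssim R^{-1/2}\|u\|_{H^1}$, and the restriction $p\le6$ is precisely what makes them absorbable, yielding $I_R''(t)\le-4\delta$ for $R$ large and again forcing finite-time blow-up.

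The main obstacle is the rigorous virial analysis in the presence of the long-range self-generated field. In the finite-variance case one must justify that $t\mapsto I(t)$ is twice differentiable and that the nonlocal terms $\int(A_1^2+A_2^2)|\varphi|^2\,dx$ and $\mathrm{Im}\int(A_1\partial_1\varphi+A_2\partial_2\varphi)\overline{\varphi}\,dx$ combine exactly to complete $8Q$; this typically requires a regularization of the data (or working first in $H^2(\R^2)$ and passing to the limit). In the radial case the delicate point is the estimate of $\mathcal{E}_R(t)$: the self-generated field produces nonlocal error terms whose decay is genuinely worse than for the pure power nonlinearity, and it is exactly the threshold $p\le6$ that permits them to be dominated. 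Everything else---the scaling monotonicity of $g$, the invariance of $\mathcal{B}(c)$, and the coercivity inequality for $Q$---is comparatively routine given the results already established, in particular Lemma \ref{ps} and Theorem \ref{existence}.
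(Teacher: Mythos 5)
Your proposal is correct and follows the same overall Berestycki--Cazenave strategy as the paper: perturb $u_c$ by the scaling $(u_c)_\tau$ with $\tau>1$, show the set $\{E<\gamma(c),\ Q<0\}\cap S(c)$ is invariant under the flow via conservation laws and the variational characterization $\gamma(c)=\inf_{\mathcal{M}(c)}E$, derive a uniform negative bound on $Q(\varphi(t))$, and conclude by a virial argument (exact virial $I''=8Q$ when $u_c\in\Sigma$, localized virial plus the Strauss estimate in the radial case). Two execution details differ. First, for the uniform bound on $Q$ you use the inequality $Q(u)\le C_0\bigl(E(u)-\gamma(c)\bigr)$ obtained by rescaling to the Pohozaev manifold; this is valid (e.g.\ from $E-\tfrac12 Q=\tfrac{\lambda(p-4)}{2p}\|u\|_p^p$ and $\|u_{t_0}\|_p^p=t_0^{p-2}\|u\|_p^p\le\|u\|_p^p$ one gets $C_0=2$), whereas the paper reaches the same bound $Q(\varphi)\le-\beta$ through the concavity of $\tau\mapsto E(\varphi_\tau)$ on $[\tau^*,1]$; the two are interchangeable. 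Second, in the radial case you differentiate the second-order quantity $I_R(t)=\int\psi_R|\varphi|^2\,dx$ twice and conclude from $I_R\ge 0$, while the paper works with the first-order momentum $V_{\chi_R}[\varphi(t)]$ of \eqref{vloc}, proves $\tfrac{d}{dt}V_{\chi_R}\le-\delta\,(\|D_1\varphi\|_2^2+\|D_2\varphi\|_2^2)$, and then needs the Riccati-type argument $V\le-\mu\int|V|^2$ to force blow-up; your route, if carried out, actually gives a cleaner contradiction since $I_R$ is nonnegative.

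One step of your sketch is compressed and deserves care: the claim that the errors are ``absorbable, yielding $I_R''(t)\le-4\delta$ for $R$ large.'' You cannot obtain this from the uniform bound $Q(\varphi)\le-\delta$ alone, because the error term scales like $R^{-(p-2)/2}\bigl(\|D_1\varphi\|_2^{(p-2)/2}+\|D_2\varphi\|_2^{(p-2)/2}\bigr)$ and the kinetic quantity is not a priori bounded along the flow, so no fixed choice of $R$ makes the error uniformly small. The fix --- which is exactly the paper's two-regime argument in the proof of Theorem \ref{unstable} --- is to interpolate between the two expressions for $Q$: when $\|D_1\varphi\|_2^2+\|D_2\varphi\|_2^2$ stays below a fixed multiple of $E(\varphi_{0,\tau})$, use $Q\le-\beta$ and note the error is then bounded and killed by taking $R$ large; when it is large, use instead $Q=(p-2)E(\varphi_0)-\tfrac{p-4}{2}\bigl(\|D_1\varphi\|_2^2+\|D_2\varphi\|_2^2\bigr)$ so that the surviving term $-(p-4)\|D\varphi\|_2^2$ absorbs the error precisely because $p\le 6$ means $(p-2)/2\le 2$. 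With this splitting your uniform negativity of $I_R''$ (and hence the blow-up conclusion) goes through.
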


If $u_c \in \Sigma$, the instability can be established by analyzing the evolution of the virial quantity $I(t)$ given by
$$
I(t):=\int_{\R^2}|x|^2|\varphi|^2 \, dx,
$$
where $\varphi \in C([0, T); H^1(\R^2)$ is a solution to the Cauchy problem of the system \eqref{sys1}. For the evolution of $I(t)$, see Lemma \ref{virial1}. We would like to mention that the quantity $I(t)$ was introduced in \cite{BBS} to consider the finite time blowup of solutions to the problem in the mass critical case.

To make matter worse, it is severe difficult to guarantee that any ground state to \eqref{system1}-\eqref{mass} belongs to $\Sigma$, because the electromagnetic interaction is long-range in the sense that the gauge fields $A_1, A_2$ decay less quickly than $|x|^{-1}$ as $|x|$ goes to infinity. In such a situation, if $u_c$ is radially symmetric, the instability can be proved by analyzing the evolution of a localized virial quantity $V_{\chi_R}[\varphi(t)]$ given by
$$
V_{\chi_R}[\varphi(t)]:=\mbox{Im} \int_{\R^2} \overline{\varphi} \left(D_1 \varphi \, \partial_1 \chi_R + D_2 \varphi \, \partial_2 \chi_R \right) \, dx,
$$
where $\chi_R$ defined by \eqref{defchi} is a radially symmetric cut-off function. For the evolution of $V_{\chi_R}[\varphi(t)]$, see Lemma \ref{vlem}.

It is worth mentioning that our paper seems the first occasion to employ the localized quantity $V_{\chi_R}[\varphi(t)]$ to investigate the finite time blow of solutions to the Cauchy problem of the system \eqref{sys1}.

\begin{rem}
Let us point out that Lemma \ref{vlem} is flexible. It is not only useful to discuss the instability of ground state to \eqref{system1}-\eqref{mass}, it is also applicable to study the finite time blowup of radially symmetric solutions to the Cauchy problem of the system \eqref{sys1} for any $p \geq 4$. To our knowledge, the finite time blowup of solutions to the problem was considered only for $p=4$.
\end{rem}

\begin{rem}
In Theorem \ref{unstable}, when $u_c$ is radially symmetric, the assumption that $p \leq 6$ is technical. If $u_c$ is radially symmetric, we conjecture that $u_c$ is strongly unstable for any $p >4$. In addition, it is an important open question to show that any ground state (without radial symmetry restriction) to \eqref{system1}-\eqref{mass} is strongly unstable.
\end{rem}

When $p=4$, we address the following remark with respect to the instability of ground states to \eqref{system1}-\eqref{mass}.

\begin{rem} \label{blowupcritical}
When $p=4$, $\lambda=1$, then any solution to \eqref{system1}-\eqref{mass} is strongly unstable. Indeed, in this case, one can closely follow the approach in \cite{huh1} to achieve the instability by constructing a finite time blowup solution to the Cauchy problem of the system \eqref{sys1}. We shall leave the proof to the interested readers. While $p=4$, $\lambda>1$, in order to demonstrate the instability of ground states to \eqref{system1}-\eqref{mass}, we need a refinement of Lemma \ref{vlem}. This shall be also treated in future publication.
\end{rem}

\end{result}

\begin{structure}
The rest of this paper is laid out as follows. In Section \ref{preliminaries}, we present some preliminary results. In Section \ref{masssub}, we consider solutions to \eqref{system1}-\eqref{mass} in the mass subcritical case, and we establish Theorem \ref{compactness} and Theorem \ref{symmetry}. In Section \ref{masscri}, we study solutions to \eqref{system1}-\eqref{mass} in the mass critical case, and we prove Theorem \ref{nonexistence}. In Section \ref{masssup}, we investigate solutions to \eqref{system1}-\eqref{mass} in the mass supercritical case, and we demonstrate Theorems \ref{existence}-\ref{infsolutions}. Section \ref{dynamical} is devoted to the study of the dynamical behaviors of ground states to \eqref{system1}-\eqref{mass}, which contains the proofs of Theorem \ref{stable} and Theorems \ref{globalexis}-\ref{unstable}. Finally, in Appendix, we show the proof of Lemma \ref{virial} which serves to discuss the evolution of the virial type quantities.
\end{structure}

\begin{notation}
Throughout the paper, for any $1 < t \leq \infty$, we denote by $L^t(\R^2)$ the Lebesgue space consisting of complex-valued functions, equipped with the norm  $\|\cdot\|_t$. We denote by $H^1(\R^2)$ the usual Sobolev space consisting of complex-valued functions, equipped with the standard norm $\|\cdot\|$. In addition, $H^1_{rad}(\R^2)$ stands for the subspace of $H^1(\R^2)$, which consists of the radially symmetric functions in $H^1(\R^2)$. The symbol $\overline{u}$ represents the conjugate function of $u$. We use the letter $C$ for a generic positive constant, whose value may change from line to line. The convergence of sequence in associated spaces is understood in the sense of subsequence. We use the notation $o_n(1)$ for any quantity which tends to zero as $n \to \infty$. For convenience of notations, we shall omit the dependence of some quantities on the constant $\lambda>0$.
\end{notation}

\section{Preliminary Results} \label{preliminaries}

In this section, we shall present some preliminary results used to establish our main results. To begin with, we give few observations. Note that the Coulomb gauge condition \eqref{gauge}, by straightforward manipulations, then
$$
\left(D_1D_1+D_2D_2\right) u = \Delta u -\left(A_1^2 + A_2^2 \right) u+2\,i \left(A_1 \partial_1 u + A_1 \partial_2 u\right).
$$
This suggests that the first equation in the system \eqref{system1} is equivalent to
$$
-\Delta u +\left(A_1^2 + A_2^2 \right) u-2\,i \left(A_1 \partial_1 u + A_1 \partial_2 u\right) + A_0 u + \alpha u =\lambda |u|^{p-2}u.
$$
Besides, there holds that
\begin{align} \label{idd}
|D_1 u|^2 + |D_2 u|^2=|\nabla u|^2+\left(A_1^2 + A_2^2 \right) |u|^2 + 2\,\mbox{Im} \left(A_1 \partial_1 u + A_2\partial_2 u\right)\overline{u}.
\end{align}
Thus we may rewrite the energy functional $E(u)$ as
\begin{align*}
E(u)=\frac 12 \int_{\R^2} |\nabla u|^2 + \left(A_1^2+A_2^2\right)|u|^2 + 2\, \mbox{Im} \left(A_1 \partial_{1} u + A_2 \partial_{2} u \right)\,\overline{u} \, dx-\frac {\lambda}{p} \int_{\R^2}|u|^p \,dx.
\end{align*}
By using the equations satisfied by $A_{\mu}$ in the system \eqref{system1} for $\mu=0, 1, 2$, then
\begin{align}\label{Aid}
\begin{split}
\int_{\R^2}A_0|u|^2&=-2\int_{\R^2}A_0\left(\partial_1 A_2-\partial_2 A_1\right) \,dx =2\int_{\R^2}A_2 \partial_1 A_0-A_1\partial_2 A_0\, dx \\
&=2\int_{\R^2} \left(A_1^2 + A_2^2\right) |u|^2 \, dx + 2\,\mbox{Im} \int_{\R^2}\left(A_1 \partial_1 u + A_2 \partial_2 u\right) \overline{u} \, dx.
\end{split}
\end{align}
 
\begin{lem} \label{escaling}
For any $t >0$ and $u \in S(c)$, there holds that $u_t \in S(c)$ and
\begin{align} \label{eut}
E(u_t)= \frac {t^2}{2} \int_{\R^2}|D_1 u|^2 + |D_2 u|^2 \, dx - \frac{\lambda t^{p-2}}{p} \int_{\R^2}|u|^p \, dx.
\end{align}
\begin{proof}
Since $u_t(x):=tu(tx)$ for $x \in \R^2$, then 
$$
\int_{\R^2} |u_t|^2 \, dx =\int_{\R^2} t^2 |u(tx)|^2 \, dx=\int_{\R^2} |u|^2 \, dx.
$$
It follows that $u_t \in S(c)$ if $u \in S(c)$. Note that
$$
\int_{\R^2} |\nabla u_t|^2 \, dx = \int_{\R^2} t^4 |\nabla u(tx)|^2 \, dx =t^2 \int_{\R^2} |\nabla u|^2 \, dx.
$$
In view of the definition of $A_1$, then
\begin{align*}
\int_{\R^2} A_1^2(u_t) |u_t|^2 \, dx&=\frac{1}{16 \pi^2}\int_{\R^2} \left(\frac{x_2-y_2}{|x-y|^2} |u_t(y)|^2\, dy\right)^2 |u_t(x)|^2 \, dx \\
&=\frac{t^2}{16 \pi^2}\int_{\R^2} \left(\frac{x_2-y_2}{|x-y|^2} |u(y)|^2\, dy\right)^2 |u(x)|^2 \, dx\\
&=t^2\int_{\R^2} A_1^2 |u|^2 \, dx
\end{align*}
and
\begin{align*}
\int_{\R^2} A_1(u_t) \partial_1 u_t \,\overline{u_t} \, dx&=\frac{1}{4\pi}\int_{\R^2} \frac{x_2-y_2}{|x-y|^2} |u_t(y)|^2\, dy \, \partial_1 u_t(x)  \overline{u_t}(x)  \, dx \\
&=\frac{t^2}{4\pi}\int_{\R^2} \frac{x_2-y_2}{|x-y|^2} |u(y)|^2\, dy \, \partial_1 u(x)  \overline{u}(x)  \, dx \\
&=t^2\int_{\R^2} A_1 \partial_1 u \, \overline{u} \, dx.
\end{align*}
By a similar way, we can obtain that
$$
\int_{\R^2} A_2^2(u_t) |u_t|^2 \, dx = t^2\int_{\R^2} A_2^2 |u|^2 \, dx, \quad \int_{\R^2} A_2(u_t) \partial_2 u_t \,\overline{u_t} \, dx=t^2\int_{\R^2} A_2 \partial_2 u \, \overline{u} \, dx.
$$
Furthermore, we find that
$$
\int_{\R^2} |u_t|^p \,dx =\int_{\R^2} t^p |u(tx)|^p \, dx =t^{p-2} \int_{\R^2} |u|^p \, dx.
$$
Taking into account \eqref{idd}, we then get \eqref{eut}. Thus the proof is completed.
\end{proof}
\end{lem}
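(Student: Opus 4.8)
The plan is to compute how the functional $E$ transforms under the map $u \mapsto u_t$ by tracking each of its constituent terms separately, exploiting the decomposition of the covariant kinetic energy provided by \eqref{idd}. First I would dispose of the two elementary pieces. For the mass, the substitution $y = tx$ in $\int_{\R^2} |u_t|^2\,dx = t^2\int_{\R^2}|u(tx)|^2\,dx$ immediately yields $\|u_t\|_2^2 = \|u\|_2^2$, so that $u_t \in S(c)$ whenever $u \in S(c)$. For the Dirichlet term, $\nabla u_t(x) = t^2 \nabla u(tx)$ gives, after the same change of variables, $\int_{\R^2}|\nabla u_t|^2\,dx = t^2\int_{\R^2}|\nabla u|^2\,dx$; likewise the nonlinearity scales as $\int_{\R^2}|u_t|^p\,dx = t^{p-2}\int_{\R^2}|u|^p\,dx$.

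The heart of the argument, and the step I expect to be the only nonroutine one, is the scaling behaviour of the nonlocal gauge potentials $A_j(u_t)$. The key observation is that the convolution kernels $G_j(x) = -\tfrac{1}{2\pi}\, x_j/|x|^2$ are positively homogeneous of degree $-1$, i.e. $G_j(\sigma x) = \sigma^{-1} G_j(x)$ for $\sigma > 0$. Plugging $|u_t(y)|^2 = t^2|u(ty)|^2$ into the convolution formula \eqref{AA} and substituting $z = ty$, the two factors of $t^{\pm 2}$ cancel and the homogeneity of $G_2$ produces exactly one extra factor of $t$, so that $A_1(u_t)(x) = t\,A_1(u)(tx)$, and identically $A_2(u_t)(x) = t\,A_2(u)(tx)$. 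With this in hand, both nonlocal contributions in \eqref{idd} scale by $t^2$: in $\int_{\R^2} A_j^2(u_t)|u_t|^2\,dx$ the integrand carries a factor $t^4$ which is reduced to $t^2$ by the Jacobian $t^{-2}$ of $y = tx$, and in the cross term $\mathrm{Im}\int_{\R^2} A_j(u_t)\,\partial_j u_t\,\overline{u_t}\,dx$ the factors $t\cdot t^2\cdot t$ again combine with the Jacobian to give $t^2$.

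Finally I would reassemble the pieces. Since each of the three summands in the representation \eqref{idd} of $\int_{\R^2}|D_1 u|^2 + |D_2 u|^2\,dx$ scales by the common factor $t^2$, the whole covariant kinetic energy satisfies $\int_{\R^2}|D_1 u_t|^2 + |D_2 u_t|^2\,dx = t^2\int_{\R^2}|D_1 u|^2 + |D_2 u|^2\,dx$. Combining this with the $t^{p-2}$ scaling of the potential term in the definition of $E$ yields \eqref{eut}. No compactness or regularity input is needed; the entire content is the homogeneity of $G_j$ together with the bookkeeping of Jacobians, so the computation of $A_j(u_t)$ is really the whole game.
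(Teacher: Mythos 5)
Your proposal is correct and follows essentially the same route as the paper: both verify the $t^2$ scaling of each term in the decomposition \eqref{idd} by a change of variables and then reassemble, together with the elementary $t^{p-2}$ scaling of the nonlinear term. The only cosmetic difference is that you isolate the gauge-covariance identity $A_j(u_t)(x) = t\,A_j(u)(tx)$ (coming from the degree $-1$ homogeneity of $G_j$) as an explicit intermediate step, whereas the paper performs the same substitution directly inside each integral.
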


As a consequence of the Hardy-Littlewood-Sobolev inequality, see \cite[Chapter 4]{Li}, we know the following estimates to the gauge fields $A_{\mu}$ for $\mu=0, 1, 2$.

\begin{lem} \cite[Propositions 4.2-4.3]{huh} \label{Aineq}
Assume $1 <s <2$, $\frac 1s - \frac 1q =\frac 12$, then
\begin{align*}
\|A_j\|_q \leq C \|u\|^2_{2s} \, \, \, \text{for} \,\, j=1,2, \quad \|A_0\|_q \leq C \|u\|^2_{2s} \|u\|_4^2.
\end{align*}
\end{lem}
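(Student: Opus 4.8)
The plan is to derive both families of bounds from the Hardy--Littlewood--Sobolev inequality, once one records that the convolution kernels are comparable to the Riesz kernel of order one. Since $G_j(x)=-\frac{1}{2\pi}x_j/|x|^2$, we have $|G_j(x)|=\frac{1}{2\pi}|x|^{-1}$ on $\R^2$, so convolution against $G_j$ is dominated in absolute value by the Riesz potential $I_1 f:=|\cdot|^{-1}*|f|$ of order $\alpha=1$. The two-dimensional instance of Hardy--Littlewood--Sobolev then reads $\|I_1 f\|_q\leq C\|f\|_s$ whenever $\frac1s-\frac1q=\frac12$ with $1<s<q<\infty$; here the constraint $q<\infty$ is exactly $s<2$ and $s>1$ is the other endpoint restriction, which is precisely the admissible range $1<s<2$ assumed in the statement.

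First I would treat $A_j$ for $j=1,2$. From \eqref{AA} one has $|A_j|=\frac12\,|G_{j'}*|u|^2|\leq \frac{1}{4\pi}\,I_1(|u|^2)$ pointwise, so applying the inequality above with $f=|u|^2$ gives $\|A_j\|_q\leq C\||u|^2\|_s=C\|u\|_{2s}^2$, which is the first assertion; the exponent relation $\frac1s-\frac1q=\frac12$ is exactly the one built into the hypotheses.

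The main work, and the step I expect to be the chief obstacle, is the bound on $A_0$, because its source is itself assembled from the gauge potentials $A_j$, so the estimate is genuinely nonlinear and must be bootstrapped off the bound just proved rather than being a direct application. Starting from the representation $A_0=G_2*\mbox{Im}(\bar u\,D_1u)-G_1*\mbox{Im}(\bar u\,D_2u)$ in \eqref{AA} and the pointwise identity $\mbox{Im}(\bar u\,D_ku)=\mbox{Im}(\bar u\,\partial_k u)+A_k|u|^2$, the estimate $|A_0|\leq \frac{1}{2\pi}\,I_1\big(\sum_k|\mbox{Im}(\bar u\,D_ku)|\big)$ together with Hardy--Littlewood--Sobolev reduces the problem to controlling the current in $L^s$. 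The gauge-field contribution $A_k|u|^2$, which is the term producing the displayed quartic bound, is then handled by H\"older: since the exponent relation gives $\frac1s=\frac1q+\frac12$, one splits
\[
\|A_k|u|^2\|_s\leq \|A_k\|_q\,\||u|^2\|_2=\|A_k\|_q\,\|u\|_4^2,
\]
and feeding in the bound $\|A_k\|_q\leq C\|u\|_{2s}^2$ established in the previous step yields exactly $\|A_0\|_q\leq C\|u\|_{2s}^2\|u\|_4^2$.

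Thus the crux is purely one of bookkeeping: arranging the single exponent relation $\frac1s-\frac1q=\frac12$ so that it simultaneously makes $I_1$ bounded from $L^s$ to $L^q$ and forces the H\"older pairing $\frac1s=\frac1q+\frac12$ that matches $\|A_k\|_q$ against $\||u|^2\|_2=\|u\|_4^2$. Once these exponents are aligned, both inequalities follow at once from Hardy--Littlewood--Sobolev and H\"older; the only genuinely nontrivial feature is the nested dependence of $A_0$ on the potentials $A_k$, which is why the second estimate is obtained by first proving, and then reinserting, the first.
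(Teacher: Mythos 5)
Your treatment of $A_1,A_2$ is correct and is exactly the argument behind the paper's citation of \cite[Propositions 4.2--4.3]{huh}: since $|G_j(x)|\leq \frac{1}{2\pi}|x|^{-1}$ (note this is an inequality, not the equality you wrote, since $|x_j|\leq |x|$), the two-dimensional Hardy--Littlewood--Sobolev inequality with $\frac 1s-\frac 1q=\frac 12$, $1<s<q<\infty$, gives $\|A_j\|_q\leq C\,\||u|^2\|_s=C\|u\|_{2s}^2$, and the admissible range is precisely $1<s<2$. The paper itself offers no proof beyond this citation, so for this half your reconstruction is complete and matches the intended route.

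The gap is in the $A_0$ estimate. After invoking the decomposition $\mbox{Im}(\overline{u}\,D_ku)=\mbox{Im}(\overline{u}\,\partial_k u)+A_k|u|^2$, you estimate only the second piece and assert that this ``yields exactly'' the stated bound; the free current $\mbox{Im}(\overline{u}\,\partial_k u)$ is silently discarded. For the complex-valued $u$ this paper works with, that term does not vanish, and its contribution to $\|A_0\|_q$ cannot be dominated by $C\|u\|_{2s}^2\|u\|_4^2$: H\"older only gives $\|\mbox{Im}(\overline{u}\,\partial_ku)\|_s\leq\|u\|_q\|\nabla u\|_2$, which is quadratic in $u$ and carries a derivative, whereas the claimed right-hand side is quartic and derivative-free. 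The mismatch is not merely formal. Take $u=e^{iNx_1}\phi$ with $\phi$ a fixed real bump: then $|u|^2=\phi^2$, so $A_1,A_2$ and the quantity $\|u\|_{2s}^2\|u\|_4^2$ are independent of $N$, while $\mbox{Im}(\overline{u}\,D_1u)=N\phi^2+A_1\phi^2$, so that $A_0$ contains the term $N\,G_2*\phi^2=-2N\,A_1(\phi)$ and $\|A_0\|_q$ grows linearly in $N$. Hence no constant can close your argument for general $u\in H^1(\R^2)$ with the definition \eqref{AA}. What your computation actually proves is the bound for the purely quartic part $G_2*(A_1|u|^2)-G_1*(A_2|u|^2)$, i.e.\ the lemma in the case $\mbox{Im}(\overline{u}\,\partial_ku)=0$ (in particular for real-valued $u$), which is the setting of Huh's propositions being cited. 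A proof of an estimate valid for all complex $u$ must either keep the extra term, e.g.\ $\|A_0\|_q\leq C\|u\|_{2s}^2\|u\|_4^2+C\|u\|_q\|\nabla u\|_2$ (this is what your decomposition really yields), or restrict the class of $u$; as written, the step from the quartic piece to the full $A_0$ fails.
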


With Lemma \ref{Aineq} in hand, we are able to immediately deduce that
\begin{align} \label{bdd}
\|A_j u\|_2 \leq \|A_j\|_q \|u\|_{s'} \leq C\|u\|_{2s}^2\|u\|_{s'},
\end{align}
where $q=\frac{2s}{2-s}$, and $s'=\frac{s}{s-1}$. In light of the Young inequality with $\eps>0$, we have that
\begin{align} \label{bdd1}
\left|\mbox{Im} \int_{\R^2}\left(A_1 \partial_1 u + A_2 \partial_2 u\right) \overline{u} \,dx\right|
\leq \eps \int_{\R^2}|\nabla u|^2 \, dx + C_{\eps} \int_{\R^2}\left(A_1^2 + A_2^2\right) |u|^2 \, dx.
\end{align}
Since $H^1(\R^2)$ is continuously embedded into $L^t(\R^2)$ for any $t \geq 2$, then \eqref{bdd}-\eqref{bdd1} readily indicates that the energy functional $E$ is well-defined in $H^1(\R^2)$.  From \cite[Lemma 3.2]{huh}, we can deduce that the energy functional $E$ is of class $C^1$, and for any $v \in H^1(\R^2)$,
\begin{align*}
E'(u)v&= \int_{\R^2}\nabla u \cdot\nabla \overline{v} + \left(A_1^2 + A_2^2 \right) u \, \overline{v}+ A_0 u \,\overline{v} \, dx  \\
&\quad -2i\int_{\R^2}\left(A_1 \partial_1 u + A_2\partial_2 u\right) \overline{v}\, dx
-\lambda  \int_{\R^2} |u|^{p-2}u \, \overline{v} \, dx.
\end{align*}
This shows that any critical point of the energy functional $E$ restricted on $S(c)$ corresponds to a solution to \eqref{system1}-\eqref{mass}.

For any $2 \leq t < \infty$, let us recall the well-known Gagliardo-Nirenberg inequality in $H^1(\R^2)$,
\begin{align} \label{GN}
\|u\|_t\leq C \|\nabla u\|_2^{\alpha} \|u\|_2^{1- \alpha} \quad \mbox{for} \, \, \alpha=1-\frac 2t.
\end{align}
Let us also recall the following diamagnetic inequality in $\R^n$.

\begin{lem} \cite[Chapter 7]{Li}\label{diaineq}
Let $A: \R^n \to \R^n$ be in $L_{loc}^2(\R^n)$ and let $f$ be in $H^1_A:=\left\{f \in L^2(\R^n): \|\left(\nabla +i A\right) f \|_2 < \infty \right\}$, then $|f| \in H^1(\R^n)$, and the diamagnetic inequality
$$
|\nabla |f|(x)| \leq |(\nabla + i A) f(x)|
$$
holds pointwise for almost every $x \in \R^n$.
\end{lem}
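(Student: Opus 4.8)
The plan is to establish the pointwise inequality first on the set $\{f \neq 0\}$, where $|f|$ is manifestly differentiable and an explicit formula is available, and then to control $|f|$ on the zero set $\{f = 0\}$ by means of a regularization that removes the singularity of $t \mapsto |t|$ at the origin. The only structural input beyond elementary calculus is that the gauge potential $A$ is \emph{real-valued}: this is exactly what makes the contribution of $iAf$ disappear upon taking real parts, which is the mechanism behind the inequality.

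First I would note that $f \in H^1_A$ already forces $f \in W^{1,1}_{loc}(\R^n)$, so that the chain rule for Sobolev functions is legitimate. Indeed, since $A \in L^2_{loc}(\R^n)$ and $f \in L^2(\R^n)$, the Cauchy-Schwarz inequality gives $Af \in L^1_{loc}(\R^n)$, and as $(\nabla + iA)f \in L^2(\R^n) \subset L^1_{loc}(\R^n)$ by hypothesis, the distributional gradient $\nabla f = (\nabla + iA)f - iAf$ lies in $L^1_{loc}(\R^n)$. For $\eps > 0$ set $f_\eps := (|f|^2 + \eps^2)^{1/2} - \eps$; then $0 \leq f_\eps \leq |f|$, so $f_\eps \in L^2(\R^n)$ and $f_\eps \to |f|$ in $L^2(\R^n)$ by dominated convergence, while $f_\eps$ is a smooth function of $f$ with no singularity at $f = 0$. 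Differentiating $(f_\eps + \eps)^2 = f\overline{f} + \eps^2$ and using that $A_j$ is real (so that $\mbox{Re}\left(iA_j|f|^2\right) = 0$) gives, componentwise and almost everywhere,
\begin{align*}
\pa_j f_\eps = \frac{\mbox{Re}\left(\overline{f}\,\pa_j f\right)}{(|f|^2 + \eps^2)^{1/2}} = \frac{\mbox{Re}\left(\overline{f}\,(\pa_j + iA_j)f\right)}{(|f|^2 + \eps^2)^{1/2}}.
\end{align*}
Since $(|f|^2 + \eps^2)^{1/2} \geq |f|$, the elementary bound $\left|\mbox{Re}(\overline{f}\,w)\right| \leq |f|\,|w|$ applied with $w = (\pa_j + iA_j)f$ yields the pointwise estimate $|\pa_j f_\eps| \leq |(\pa_j + iA_j)f|$, and summing over $j$ gives $|\nabla f_\eps| \leq |(\nabla + iA)f|$ almost everywhere.

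It then remains to pass to the limit $\eps \to 0^+$. We already have $f_\eps \to |f|$ in $L^2(\R^n)$, and the estimate just obtained shows that $\{\nabla f_\eps\}$ is bounded in $L^2(\R^n)$ by $\|(\nabla + iA)f\|_2$. Weak compactness in $L^2$ together with the convergence $f_\eps \to |f|$ in the sense of distributions identifies the weak limit of $\nabla f_\eps$ as $\nabla|f|$, whence $|f| \in H^1(\R^n)$, and weak lower semicontinuity of the $L^2$-norm already delivers the integrated inequality. For the pointwise statement I would split into two cases: on $\{f \neq 0\}$ the explicit formula $\nabla|f| = \mbox{Re}\left(\overline{f}\,(\nabla + iA)f\right)/|f|$ holds and gives $|\nabla|f|| \leq |(\nabla + iA)f|$ directly; on $\{f = 0\}$ one invokes the standard fact that the gradient of a Sobolev function vanishes almost everywhere on any of its level sets, so that $\nabla|f| = 0$ there and the claimed inequality is trivial.

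The main obstacle is precisely this behaviour on the zero set. Without regularization one cannot differentiate $|f|$ where $f$ vanishes, and the formal expression $\mbox{Re}\left(\overline{f}\,(\nabla + iA)f\right)/|f|$ is meaningless there; the substitution $f_\eps = (|f|^2 + \eps^2)^{1/2} - \eps$ is what makes every manipulation legitimate on all of $\R^n$ simultaneously. The passage to the limit, combined with the vanishing-on-level-sets property, is then what upgrades the integrated inequality to the pointwise bound asserted in the lemma.
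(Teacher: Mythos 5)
This lemma is not proved in the paper at all --- it is quoted verbatim from Lieb--Loss, \emph{Analysis}, Chapter 7 (Theorem 7.21 there) --- and your argument is exactly that standard textbook proof: the reduction $\nabla f = (\nabla+iA)f - iAf \in L^1_{loc}$, the regularization $f_\eps=(|f|^2+\eps^2)^{1/2}-\eps$, the cancellation $\mbox{Re}\left(iA_j|f|^2\right)=0$ for real-valued $A_j$, the uniform bound $|\nabla f_\eps|\le|(\nabla+iA)f|$, and the passage to the limit $\eps \to 0^+$. Your proof is correct; the only terse spot is the identification of $\nabla|f|$ via the explicit formula on $\{f\neq 0\}$, but your own setup already supplies it, since $\nabla f_\eps$ converges pointwise a.e.\ (vanishing identically on $\{f=0\}$) with dominating function $|(\nabla+iA)f|\in L^2(\R^n)$, hence converges strongly in $L^2(\R^n)$ to its weak limit $\nabla|f|$, which gives the pointwise inequality everywhere at once.
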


As an easy result of the Gagliardo-Nirenberg inequality \eqref{GN} and Lemma \ref{diaineq}, then
\begin{align} \label{MGN}
\|u\|_t \leq C \|\nabla |u|\|_2^{\frac{t-2}{t}} \|u\|_2^{\frac 2 t} \leq C \left(\|D_1 u\|_2^{\frac{t-2}{t}} + \|D_2 u\|_2^{\frac{t-2}{t}}\right)\|u\|_2^{\frac 2 t}.
\end{align}
The inequality \eqref{MGN} can be regarded as a Gagliardo-Nirenberg type inequality with respect to the covariant derivative operators $D_1, D_2$.

\begin{lem} \label{BL}
Assume $\{u_n\} \subset H^1(\R^2)$, and $u_n \rightharpoonup u$ in $H^1(\R^2)$ as $n \to \infty$, then
\begin{enumerate}
  \item [(i)] $\int_{\R^2}{A_{j}^2(u_n)} u_n \overline{v}  \,dx =\int_{\R^2}A_{j}^2(u) u \,\overline{v} \, dx +o_n(1)$ for any $v \in H^1(\R^2),$ and $j=1,2$;
  \item [(ii)] $\int_{\R^2}{A_{0}(u_n)} u_n \overline{v}  \,dx =\int_{\R^2}A_{0}(u) u \,\overline{v} \, dx +o_n(1)$ for any $v \in H^1(\R^2)$;
  \item [(iii)] $\int_{\R^2}A_j^2(u_n-u)|u_n-u|^2 \, dx + \int_{\R^2}A_j^2(u)|u|^2 \, dx =\int_{\R^2}A_j^2(u_n)|u_n|^2 \, dx + o_n(1)$ for $j=1,2$.
\end{enumerate}
\end{lem}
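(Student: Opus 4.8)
The plan is to reduce all three statements to two ingredients: the Hardy--Littlewood--Sobolev estimates of Lemma \ref{Aineq} together with \eqref{bdd}, and the compact embedding $H^1(\R^2)\hookrightarrow L^t_{loc}(\R^2)$, valid for every $t\in[1,\infty)$. Since $u_n\rightharpoonup u$ in $H^1(\R^2)$, the sequence is bounded in each $L^t(\R^2)$ with $t\ge2$ and, up to a subsequence, $u_n\to u$ strongly in $L^t_{loc}(\R^2)$ and almost everywhere; writing $w_n:=u_n-u\rightharpoonup 0$, the same holds for $\{w_n\}$ with limit $0$. Throughout I would fix $s\in(1,2)$ close enough to $2$ that every Hölder exponent appearing below is admissible, and set $q=\tfrac{2s}{2-s}>2$, so that, by Lemma \ref{Aineq}, $A_j(u)\in L^r(\R^2)$ for all $r\in(2,\infty)$ and $A_j^2(u),\,|u|^2\in L^r(\R^2)$ for all $r\in(1,\infty)$.

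The technical heart, which I would isolate first, is a local strong convergence statement for the gauge potentials: \emph{if $v_n\rightharpoonup v$ in $H^1(\R^2)$, then $A_j(v_n)\to A_j(v)$ strongly in $L^q_{loc}(\R^2)$ and is bounded in $L^q(\R^2)$}. To prove it I would write $A_j(v_n)-A_j(v)=cG_j*(|v_n|^2-|v|^2)$ and split the kernel as $G_j=G_j\mathbf 1_{B_M}+G_j\mathbf 1_{B_M^c}$. Since $|v_n|^2-|v|^2\to0$ in $L^s_{loc}(\R^2)$, the near contribution $G_j*\bigl(\mathbf 1_{B_M}(|v_n|^2-|v|^2)\bigr)$ tends to $0$ in $L^q(\R^2)$ by boundedness of $G_j*\colon L^s\to L^q$. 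For the far contribution on a ball $B_R$ with $M\gg R$, the bound $|G_j(x-y)|\le\tfrac{1}{2\pi|x-y|}$ gives $\|G_j(x-\cdot)\mathbf 1_{B_M^c}\|_{s'}\le CM^{(2-s')/s'}\to0$, \emph{precisely because $s<2$ forces $s'>2$}; hence this part is uniformly small on $B_R$. This long-range tail estimate is the crucial point and the main obstacle, since $G_j\sim|x|^{-1}$ is not globally integrable and so the potentials cannot be controlled by naive convolution bounds. The same argument, applied to the cross density and exploiting that $u$ is fixed with small tails, shows moreover that $B_n:=cG_j*\bigl(2\,\mathrm{Re}(\bar u w_n)\bigr)$ tends to $0$ strongly in $L^q(\R^2)$, because $\|\bar u w_n\|_s\to0$.

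With this lemma, part (i) follows by splitting $\R^2=B_R\cup B_R^c$: on $B_R$ one combines $A_j^2(u_n)\to A_j^2(u)$ in $L^{q/2}(B_R)$ and $u_n\to u$ in $L^t(B_R)$ via Hölder's inequality, while on $B_R^c$ the integral is uniformly small because $v\in L^t(\R^2)$ has small tails and $A_j^2(u_n)u_n$ is uniformly bounded in a suitable $L^r$ by \eqref{bdd}. For part (ii) the decisive observation is that $A_0$ enters \emph{linearly}, so only weak convergence $A_0(u_n)\rightharpoonup A_0(u)$ in $L^q(\R^2)$ is needed, paired against $u_n\bar v\to u\bar v$ strongly in $L^{q'}_{loc}(\R^2)$. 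I would obtain this weak convergence from the weak--weak continuity of $G_1*,G_2*$, after checking that the current density $\mathrm{Im}(\bar u_n D_2 u_n)=\mathrm{Im}(\bar u_n\partial_2 u_n)+A_2(u_n)|u_n|^2$ converges weakly in the relevant $L^{\tilde s}$: the summand with $\partial_2 u_n$ is handled by testing against $\phi\in C_c^\infty(\R^2)$, using $\bar u_n\phi\to\bar u\phi$ strongly in $L^2$ and $\partial_2 u_n\rightharpoonup\partial_2 u$ in $L^2$, and the summand $A_2(u_n)|u_n|^2$ converges by the strong local convergence just established.

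Finally, part (iii) is a Brezis--Lieb type expansion. Writing $A_j(u_n)=A_j(u)+A_j(w_n)+B_n$ and $|u_n|^2=|u|^2+|w_n|^2+2\,\mathrm{Re}(\bar u w_n)$, I would expand $A_j^2(u_n)|u_n|^2$ and show that every term mixing the fixed objects $A_j(u),|u|^2$ with the vanishing ones tends to $0$. The terms carrying $B_n$ die because $B_n\to0$ in $L^q(\R^2)$; the terms linear in $A_j(w_n)$ die by weak convergence $A_j(w_n)\rightharpoonup0$ paired with fixed $L^{q'}$ factors; the term $A_j^2(w_n)|u|^2$ dies because $A_j(w_n)\to0$ strongly in $L^q_{loc}(\R^2)$ while $|u|^2$ has small tails; and the terms with cross density $2\,\mathrm{Re}(\bar u w_n)$ die since that density tends to $0$ strongly in $L^s(\R^2)$. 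What survives is exactly $A_j^2(u)|u|^2+A_j^2(w_n)|w_n|^2$, which is the claimed identity. The bookkeeping of the many cross terms is lengthy but routine once the local convergence lemma and the global vanishing of $B_n$ are in place.
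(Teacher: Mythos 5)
Your proof is correct, but it takes a genuinely different --- and considerably more self-contained --- route than the paper's. The paper disposes of (iii) by citing it outright from \cite[Proposition 2.2]{WaTa}, and proves (i)--(ii) in a few lines: the same citation gives the pointwise convergence $A_j(u_n)(x)\to A_j(u)(x)$ a.e., Hölder together with Lemma \ref{Aineq} gives a uniform $L^2(\R^2)$ bound on $A_j^2(u_n)u_n$ (this is \eqref{Al2}), and then the standard fact that an $L^2$-bounded sequence converging a.e.\ converges weakly in $L^2$ (\cite[Corollary 3.8]{Brezis}) allows one to pair against $v\in L^2(\R^2)$; assertion (ii) is declared analogous. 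You instead extract all the convergence of the gauge potentials directly from the kernel: your near/far splitting of the convolution, with the long-range tail controlled by $\|G_j(x-\cdot)\mathbf 1_{B_M^c}\|_{s'}\le CM^{(2-s')/s'}$ (valid precisely because $s'>2$), upgrades the a.e.\ convergence the paper imports from \cite{WaTa} to strong $L^q_{loc}$ convergence plus global $L^q$ bounds. From that you get (i) by a ball/tail decomposition, (ii) by weak--weak continuity of the convolution operator applied to the current density, and (iii) by a genuine Brezis--Lieb expansion --- the one assertion the paper does not prove at all. What the paper's route buys is brevity, at the price of outsourcing both the potential convergence and the whole of (iii); what yours buys is a stronger intermediate lemma and a complete, self-contained argument. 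Two small bookkeeping remarks on your part (iii): the mixed terms $A_j^2(u)|w_n|^2$ and $A_j(u)A_j(w_n)|w_n|^2$ do not appear in your explicit list, but both die by the mechanisms you already state (weak convergence $|w_n|^2\rightharpoonup 0$ against the fixed factor $A_j^2(u)$, respectively strong local convergence of $A_j(w_n)$ combined with the small tails of the fixed factor $A_j(u)$), so your claim that the remaining bookkeeping is routine is justified.
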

\begin{proof}
The assertion $(iii)$ is from \cite[Proposition 2.2]{WaTa}, it remains to prove that the assertions $(i)$-$(ii)$ hold true. In what follows, we only show the proof of the assertion $(i)$, and the assertion $(ii)$ can be proved by an analogous way. Since $u_n \rightharpoonup u$ in $H^1(\R^2)$ as $n \to \infty$, we then know that $u_n(x) \to u(x)$ a.e. $x \in \R^2$ as $n \to \infty$. It follows from \cite[Proposition 2.2]{WaTa} that $A_j(u_n)(x) \to A_j(u)(x)$ a.e. $x \in \R^2$ as $n \to \infty$. Therefore, $A^2_j(u_n)u_n(x) \to A_j^2(u)u(x)$ a.e. $x \in \R^2$ as $n \to \infty$. In addition, by the H\"older inequality and Lemma \ref{Aineq},
\begin{align} \label{Al2}
\begin{split}
\int_{\R^2}|A_j^2(u_n) u_n|^2 \, dx &\leq \left(\int_{\R^2}|A_j^2(u_n)|^q\right)^{\frac 2q} \left(\int_{\R^2}|u_n|^{\frac{2q}{q-2}}\right)^{\frac{q-2}{q}} \\
&\leq C \left(\int_{\R^2}|u_n|^{2s}\right)^{\frac 4s} \left(\int_{\R^2}|u_n|^{\frac{2q}{q-2}}\right)^{\frac{q-2}{q}},
\end{split}
\end{align}
where $s=\frac{2q}{q+1}$ and $2<q<\infty$. Thus \eqref{Al2} indicates that $\{A_j^2(u_n) u_n\}$ is bounded in $L^2(\R^2)$, because $\{u_n\}$ is bounded in $H^1(\R^2)$ and  $H^1(\R^2)$ is continuously embedded into $L^t(\R^2)$ for any $t \geq 2$. From \cite[Corollary 3.8]{Brezis}, then $A_j^2(u_n)u_n \rightharpoonup A_j^2(u)u$ in $L^2(\R^2)$ as $n \to \infty$, hence the assertion $(i)$ follows, and the proof is completed.
\end{proof}

Correspondingly, we have the following statement.

\begin{lem} \label{BL1}
Assume $\{u_n\} \subset H^1(\R^2)$, and $u_n \rightharpoonup u$ in $H^1(\R^2)$ as $n \to \infty$, then
\begin{enumerate}
  \item [(i)] $ \mbox{Im} \int_{\R^2}{A_{j}(u_n)} \partial_j{u_n}  \overline{v}  \,dx =\mbox{Im}\int_{\R^2}A_{j}(u) \partial_j u \, \overline{v} \, dx +o_n(1)$ for any $v \in H^1(\R^2)$, $j=1,2$;
  \item [(ii)] $\mbox{Im} \int_{\R^2}A_j(u_n-u) \partial_j(u_n-u)\left(\overline{u}_n-\overline{u}\right)\, dx+ \mbox{Im} \int_{\R^2}A_j(u) \partial_j u_j  \overline{u} \, dx \\ =\mbox{Im} \int_{\R^2}{A_j(u_n)}\partial_j u_n \overline{u}_n \,dx+o_n(1)$ for $j=1,2$.
\end{enumerate}
\end{lem}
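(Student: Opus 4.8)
The plan is to prove both assertions in close parallel with Lemma \ref{BL}, the only genuinely new feature being the first-order factor $\partial_j u_n$, which converges merely weakly in $L^2(\R^2)$. Throughout I would rely on the same three ingredients used there: that $u_n\to u$ and, by \cite[Proposition 2.2]{WaTa}, $A_j(u_n)\to A_j(u)$ almost everywhere on $\R^2$; that $\{A_j(u_n)\}$ is bounded in $L^q(\R^2)$ for every $q\in(2,\infty)$ by Lemma \ref{Aineq}; and that $\partial_j u_n\rightharpoonup\partial_j u$ in $L^2(\R^2)$. The embedding $H^1(\R^2)\hookrightarrow L^t(\R^2)$ for all $t\ge2$ supplies all the Lebesgue exponents needed below.

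For $(i)$ I would split
\[
\mbox{Im}\!\int_{\R^2}\! A_j(u_n)\partial_j u_n\,\overline v\,dx-\mbox{Im}\!\int_{\R^2}\! A_j(u)\partial_j u\,\overline v\,dx=\mbox{Im}\!\int_{\R^2}\!\big(A_j(u_n)-A_j(u)\big)\partial_j u_n\,\overline v\,dx+\mbox{Im}\!\int_{\R^2}\! A_j(u)\big(\partial_j u_n-\partial_j u\big)\overline v\,dx.
\]
The second term tends to $0$ because $A_j(u)\,\overline v\in L^2(\R^2)$ (H\"older, since $A_j(u)\in L^q$ and $v$ lies in the dual Lebesgue space) is a \emph{fixed} test function against which $\partial_j u_n\rightharpoonup\partial_j u$ weakly in $L^2$. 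For the first term, H\"older and the uniform bound on $\|\partial_j u_n\|_2$ reduce matters to $\|(A_j(u_n)-A_j(u))\overline v\|_2\to0$. Setting $g_n:=|A_j(u_n)-A_j(u)|^2|v|^2$, one has $g_n\to0$ a.e., while for every measurable $F\subset\R^2$,
\[
\int_F g_n\,dx\le\|A_j(u_n)-A_j(u)\|_q^2\Big(\int_F|v|^{\frac{2q}{q-2}}\,dx\Big)^{\frac{q-2}{q}}.
\]
Since the $L^q$-norms are uniformly bounded and $v\in L^{2q/(q-2)}(\R^2)$, the right-hand side is uniformly small both as $|F|\to0$ and as $F=\{|x|>R\}$ with $R\to\infty$; thus $\{g_n\}$ is uniformly integrable and tight, and Vitali's theorem gives $\int_{\R^2}g_n\,dx\to0$, proving $(i)$.

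For $(ii)$ I would run a Brezis--Lieb expansion. Put $v_n:=u_n-u\rightharpoonup0$ in $H^1(\R^2)$ and use the quadratic structure of the gauge field. Writing $|u_n|^2=|v_n|^2+|u|^2+2\mbox{Re}(v_n\overline u)$ and feeding this through the convolution defining $A_j$ in \eqref{AA},
\[
A_j(u_n)=A_j(v_n)+A_j(u)+\Phi_n,\qquad\partial_j u_n=\partial_j v_n+\partial_j u,\qquad\overline{u_n}=\overline{v_n}+\overline u,
\]
where $\Phi_n$ is the bilinear cross term, i.e. $2\mbox{Re}(v_n\overline u)$ convolved with the same kernel. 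Multiplying out $\mbox{Im}\,A_j(u_n)\partial_j u_n\overline{u_n}$ produces two diagonal contributions, $\mbox{Im}\,A_j(v_n)\partial_j v_n\overline{v_n}$ and $\mbox{Im}\,A_j(u)\partial_j u\,\overline u$, which reproduce exactly the two terms on the left-hand side, plus ten mixed terms, each carrying at least one $v_n$-factor and at least one $u$-factor (or $\Phi_n$ in the gauge slot). Applying \cite[Proposition 2.2]{WaTa} to the null sequence gives $A_j(v_n)\to A_j(0)=0$ a.e., and the analogous Hardy--Littlewood--Sobolev estimate (Lemma \ref{Aineq}) applied to the bilinear kernel shows $\Phi_n\to0$ a.e. with $\{A_j(v_n)\},\{\Phi_n\}$ bounded in every $L^q$, $q\in(2,\infty)$; by \cite[Corollary 3.8]{Brezis} both converge weakly to $0$ in $L^q$.

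It then remains to show every mixed term integrates to $o_n(1)$, which I would organize by how many factors are ``moving.'' The single-moving-factor terms are immediate pairings of a weak-null sequence against a fixed function: $\mbox{Im}\int A_j(u)\partial_j v_n\overline u$ and $\mbox{Im}\int\Phi_n\partial_j u\,\overline u$ vanish against fixed $L^2$, resp. $L^{q'}$, functions, and $\mbox{Im}\int A_j(v_n)\partial_j u\,\overline u$ likewise by weak $L^q$-convergence. The \textbf{main obstacle} is the two-moving-factor terms, such as $\mbox{Im}\int A_j(v_n)\partial_j v_n\overline u$, $\mbox{Im}\int A_j(v_n)\partial_j u\,\overline{v_n}$, $\mbox{Im}\int A_j(u)\partial_j v_n\overline{v_n}$ and their $\Phi_n$-analogues: here a naive weak$\times$weak pairing fails, so one factor must first be \emph{upgraded} to strong convergence. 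For instance $A_j(v_n)\overline u\to0$ strongly in $L^2$ (a.e. convergence plus the uniform-integrability argument of $(i)$), after which pairing with the $L^2$-bounded $\partial_j v_n$ yields $o_n(1)$; symmetrically, $A_j(v_n)\partial_j u\to0$ strongly in the appropriate $L^\rho$, then paired with the bounded $\{v_n\}$ in $L^{\rho'}$. Verifying that $\Phi_n$ inherits exactly these a.e./strong-convergence properties—a small extension of \cite[Proposition 2.2]{WaTa} to the bilinear kernel—is the one point requiring genuine care. With all ten mixed terms shown to vanish, the Brezis--Lieb identity $(ii)$ follows.
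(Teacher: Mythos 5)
Your proposal is correct. For assertion (i) it is essentially the paper's own argument: the same splitting of the difference into $\mbox{Im}\int(A_j(u_n)-A_j(u))\partial_j u_n\overline v\,dx$ plus $\mbox{Im}\int A_j(u)(\partial_j u_n-\partial_j u)\overline v\,dx$, with the second piece handled exactly as in the paper by pairing the weak $L^2$-limit $\partial_j u_n\rightharpoonup\partial_j u$ against the fixed function $A_j(u)\overline v\in L^2(\R^2)$. The only divergence is in how the key strong convergence $\|(A_j(u_n)-A_j(u))v\|_2\to 0$ is obtained: the paper expands the square and combines the weak $L^q$-convergence $A_j(u_n)\rightharpoonup A_j(u)$ (tested against $A_j(u)|v|^2\in L^{q'}$) with $\int A_j^2(u_n)|v|^2\,dx\to\int A_j^2(u)|v|^2\,dx$, whereas you use a.e.\ convergence plus uniform integrability and tightness via Vitali; both routes are sound and of comparable length. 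For assertion (ii) the paper gives no proof at all beyond ``adapting the similar arguments,'' so your Br\'ezis--Lieb bookkeeping (twelve terms, two diagonal, ten mixed, with one factor upgraded to strong convergence in each two-moving-factor term) is genuinely more explicit than the text, and it is the right way to carry out what the paper leaves implicit. One simplification at the point you flag as requiring ``genuine care'': since $v_n:=u_n-u\rightharpoonup 0$ in $H^1(\R^2)$ and $u$ is fixed, one has $v_n\overline u\to 0$ \emph{strongly} in $L^s(\R^2)$ for every $s\in(1,2)$ (Rellich on balls for the local part, and the uniform tail bound $\|v_n\overline u\|_{L^s(|x|>R)}\le \|v_n\|_{2s}\|u\|_{L^{2s}(|x|>R)}\to 0$ as $R\to\infty$); the Hardy--Littlewood--Sobolev estimate underlying Lemma \ref{Aineq} then yields $\Phi_n\to 0$ strongly in $L^q(\R^2)$ for all $q\in(2,\infty)$, which disposes of every term containing $\Phi_n$ at once and avoids any a.e.-convergence extension of \cite[Proposition 2.2]{WaTa} to the bilinear kernel.
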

\begin{proof}
Here we only give the proof of the assertion $(i)$, and the assertion $(ii)$ can be deduced by adapting the similar arguments. In view of Lemma \ref{Aineq}, we get that $A_{j}(u) v \in L^2(\R^2)$. Since $u_n \rightharpoonup u$ in $H^1(\R^2)$ as $n \to \infty$, then
\begin{align}\label{convergence}
\mbox{Im} \int_{\R^2}{A_j(u)} \partial_j u_n \,\overline{v}  \,dx =\mbox{Im} \int_{\R^2}{A_j(u)} \partial_j u \,\overline{v}  \,dx+o_n(1).
\end{align}
It follows from Lemma \ref{Aineq} that $\{A_j(u_n)\}$ is bounded in $L^q(\R^2)$ for any $2<q<\infty$. On the other hand, we know that $A_j(u_n)(x) \to A_j(u)(x)$ a.e. $x \in \R^2$ as $n \to \infty$. Therefore, it holds that $A_j(u_n)\rightharpoonup A_j(u)$ in $L^q(\R^2)$ as $n \to \infty$. This together with the fact that $A_{j}(u)\, |v|^2 \in L^{q'}(\R^2)$ leads to
\begin{align} \label{AA1}
\int_{\R^2}{A_j(u_n)}{A_j(u)}|{v}|^2 \,dx =\int_{\R^2} A_j^2(u) \,|{v}|^2 \, dx +o_n(1).
\end{align}
Moreover, by using the weak convergence of $\{u_n\}$ in $H^1(\R^2)$, we can infer that
\begin{align} \label{AA2}
\int_{\R^2}{A_j^2(u_n)}|{v}|^2 \,dx =\int_{\R^2}{A_j^2(u)}|{v}|^2 \,dx +o_n(1).
\end{align}
We now apply \eqref{AA1}-\eqref{AA2} to conclude that
\begin{align} \label{Aj}
\int_{\R^2}\left|A_j(u_n)-A_j(u)\right|^2|v|^2\, dx =o_n(1).
\end{align}
As a consequence of the H\"older inequality and \eqref{Aj}, we then get that
\begin{align} \label{aim}
\mbox{Im}\int_{\R^2}{A_j(u_n)} \partial_j u_n \overline{v}  \,dx =\mbox{Im} \int_{\R^2}{A_j(u)} \partial_j u_n \overline{v}  \,dx+o_n(1).
\end{align}
Hence \eqref{convergence} and \eqref{aim} readily indicate that the assertion $(i)$ holds, and we then finish the proof.
\end{proof}

As a direct application of \cite[Proposition 2.1]{BHS1}(see also \cite{HuSe}), we have the following.

\begin{lem} \label{Ph}
Let $u \in H^1(\R^2)$ be a solution to the system \eqref{system1}, then $Q(u)=0$, where the functional $Q$ is defined by \eqref{defq}.
\end{lem}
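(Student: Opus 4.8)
The plan is to read off $Q(u)=0$ from the scaling behaviour of the energy recorded in Lemma \ref{escaling}, using that the dilation $u_t(x)=tu(tx)$ preserves the $L^2$-norm. First I would note that, since $u$ solves \eqref{system1}, the formula for $E'$ displayed in the preliminaries identifies $u$ as a critical point of the unconstrained functional $J(v):=E(v)+\frac{\alpha}{2}\int_{\R^2}|v|^2\,dx$; that is, $J'(u)=0$ in the dual of $H^1(\R^2)$, where $\alpha\in\R$ is the frequency in the first equation of \eqref{system1}. Indeed, testing that equation against $\overline v$ and integrating the $-\Delta u$ term by parts reproduces exactly the expression for $E'(u)v$ plus the term $\alpha\,\mbox{Re}\int_{\R^2}u\overline v\,dx$.

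Next I would study the fibering map $g(t):=E(u_t)$ for $t>0$. By Lemma \ref{escaling}, $u_t\in S(\|u\|_2^2)$ for every $t$ and
$$
g(t)=\frac{t^2}{2}\int_{\R^2}|D_1u|^2+|D_2u|^2\,dx-\frac{\lambda t^{p-2}}{p}\int_{\R^2}|u|^p\,dx,
$$
so differentiating directly gives $g'(1)=\int_{\R^2}|D_1u|^2+|D_2u|^2\,dx-\frac{\lambda(p-2)}{p}\int_{\R^2}|u|^p\,dx=Q(u)$; this step is unconditional. Since the dilation preserves the mass, $J(u_t)=g(t)+\frac{\alpha}{2}\|u\|_2^2$, whence $\frac{d}{dt}J(u_t)\big|_{t=1}=g'(1)=Q(u)$. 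On the other hand, by the chain rule $\frac{d}{dt}J(u_t)\big|_{t=1}=J'(u)[w]$ with $w=\partial_t u_t\big|_{t=1}=u+x\cdot\nabla u$, and this vanishes because $J'(u)=0$. Comparing the two expressions yields $Q(u)=0$. Equivalently, keeping the constrained viewpoint, the curve $t\mapsto u_t$ lies entirely on $S(c)$, so $w$ is tangent to $S(c)$ at $u$, i.e. $\mbox{Re}\int_{\R^2}u\overline w\,dx=\frac12\frac{d}{dt}\|u_t\|_2^2\big|_{t=1}=0$, and the Lagrange-multiplier contribution $\alpha\,\mbox{Re}\int_{\R^2}u\overline w\,dx$ drops out of $g'(1)=E'(u)[w]$.

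The delicate point, and the main obstacle, is the chain-rule identity $\frac{d}{dt}J(u_t)\big|_{t=1}=J'(u)[w]$: it presupposes that $t\mapsto u_t$ is differentiable in $H^1(\R^2)$ at $t=1$, equivalently that the generator $w=u+x\cdot\nabla u$ again belongs to $H^1(\R^2)$, which is not automatic for a general element of $H^1(\R^2)$. I would dispatch this by first upgrading the regularity and decay of $u$: the elliptic equation for $u$ has a right-hand side controlled through Lemma \ref{Aineq} and \eqref{bdd}, so a bootstrap yields enough smoothness and integrability to place $x\cdot\nabla u$ in $H^1(\R^2)$ and to legitimise every integration by parts. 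Note that the scaling of the nonlocal, long-range gauge terms — the $\int_{\R^2}(A_1^2+A_2^2)|u|^2\,dx$ and $\mbox{Im}\int_{\R^2}(A_1\partial_1u+A_2\partial_2u)\overline u\,dx$ contributions, and the $A_0$ term via \eqref{Aid} — is already delivered by Lemma \ref{escaling}, so no separate cancellation of these terms is needed; the only genuine difficulty is the regularity input, which is precisely the content packaged in \cite[Proposition 2.1]{BHS1} and may be invoked directly.
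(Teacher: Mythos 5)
Your reductions are fine as far as they go: $g'(1)=Q(u)$ is an unconditional computation from Lemma \ref{escaling}, and $J'(u)=0$ follows from the displayed formula for $E'$ in Section \ref{preliminaries}. But the step you yourself flag as delicate --- the chain rule $\frac{d}{dt}J(u_t)\big|_{t=1}=J'(u)[u+x\cdot\nabla u]$ --- is where the entire mathematical content of a Pohozaev identity lives, and neither of your two patches closes it. The identity requires the curve $t\mapsto u_t$ to be differentiable \emph{in} $H^1(\R^2)$, i.e.\ the difference quotients must converge in $H^1$ to $u+x\cdot\nabla u$; this is a statement about spatial \emph{decay} of $\nabla u$ (and of $\nabla^2 u$), not about smoothness. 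Elliptic bootstrap yields $u\in W^{2,t}_{loc}$ and local smoothness but says nothing at infinity: in $\R^2$ a smooth function with $|\nabla u|\sim |x|^{-2}$ lies in $H^1$ while $x\cdot\nabla u\notin L^2$. Obtaining actual decay for solutions of \eqref{system1} would require sign information on the frequency $\alpha$ (the lemma is claimed for every $H^1$ solution, with $\alpha\in\R$ arbitrary) and control of the nonlocal gauge terms, which are long-range --- the paper itself stresses that $A_1,A_2$ decay more slowly than $|x|^{-1}$ and that even placing ground states in $\Sigma=\{u:|x|u\in L^2\}$ is ``severely difficult''. So the assertion that ``a bootstrap yields enough smoothness and integrability to place $x\cdot\nabla u$ in $H^1$'' is precisely the unproved claim.

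Your fallback --- that the missing input ``is precisely the content packaged in \cite[Proposition 2.1]{BHS1} and may be invoked directly'' --- is circular. That proposition is not a regularity or decay lemma; it \emph{is} the Pohozaev-type identity for the Chern--Simons--Schr\"odinger system, and the paper's entire proof of Lemma \ref{Ph} consists of citing it (together with \cite{HuSe}). If you invoke it, you are done and the whole fibering argument is superfluous; if you do not, your argument has a hole exactly where such proofs always do. A genuinely self-contained proof would follow the standard route behind that citation: multiply the equation by $\chi_R\,(x\cdot\nabla\overline{u})$ for a cutoff $\chi_R$, integrate by parts over balls, handle the nonlocal terms via \eqref{AA} and \eqref{Aid}, and let $R\to\infty$ using only $u\in H^1$ and local $W^{2,t}$ regularity --- this avoids ever needing $x\cdot\nabla u\in L^2$ globally, which is the point your scaling argument cannot get around.
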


\begin{lem} \label{invariant}
Assume $2<p <\infty$, then the energy functional $E$ is invariant under any orthogonal transformation in $\R^2$.
\end{lem}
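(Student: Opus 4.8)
The plan is to treat the two ingredients of $E$ separately under the action $u\mapsto u_O$, where $u_O(x):=u(Ox)$ for $O\in O(2)$, and to show each is preserved. The potential term is immediate: since $|u_O(x)|^2=|u(Ox)|^2$ and $|\det O|=1$, the substitution $y=Ox$ gives $\int_{\R^2}|u_O|^p\,dx=\int_{\R^2}|u|^p\,dx$, and the same computation shows $u_O\in S(c)$ whenever $u\in S(c)$. Hence everything reduces to the covariant Dirichlet term $\int_{\R^2}|D_1u|^2+|D_2u|^2\,dx$.

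The key step is the transformation law of the gauge potential $\mathbf A(u):=(A_1(u),A_2(u))$. Recall from \eqref{defA} that $\mathbf A(u)$ is the unique vector field decaying at infinity with $\partial_1A_1+\partial_2A_2=0$ and $\partial_1A_2-\partial_2A_1=-\tfrac12|u|^2$, the explicit solution being \eqref{AA}. Performing the substitution $y=Ox$ directly in the convolution formula \eqref{AA} (exactly as in the proof of Lemma \ref{escaling}), or equivalently checking that the candidate field solves the above div--curl system for $u_O$ and invoking uniqueness, I would obtain
$$
\mathbf A(u_O)(x)=\det(O)\,O^{-1}\mathbf A(u)(Ox).
$$
Here $O^{-1}=O^{\mathsf T}$ is the usual pullback of a one-form, while the extra factor $\det(O)$ is forced by the curl equation: the curl of the pulled-back field carries a factor $\det(O)$, which must be cancelled so that $\partial_1A_2-\partial_2A_1$ again equals the genuine scalar $-\tfrac12|u_O|^2$.

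Combining this with $\nabla u_O(x)=O^{-1}(\nabla u)(Ox)$ and the orthogonality of $O$, for a rotation $O\in SO(2)$ (so $\det O=1$) one finds
$$
(D_1u_O,D_2u_O)(x)=O^{-1}\big[(\nabla u)(Ox)+i\,\mathbf A(u)(Ox)u(Ox)\big]=O^{-1}(D_1u,D_2u)(Ox),
$$
so that $|D_1u_O|^2+|D_2u_O|^2=(|D_1u|^2+|D_2u|^2)(Ox)$ pointwise; integrating and using $|\det O|=1$ gives the invariance of the covariant Dirichlet term, hence $E(u_O)=E(u)$ for every rotation.

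The main obstacle is the orientation-reversing case, which is a genuine feature of the Chern--Simons structure rather than a technicality. When $\det O=-1$ the sign $\det(O)=-1$ turns $\nabla+i\mathbf A$ into its conjugate $\nabla-i\mathbf A$, so the covariant Dirichlet term instead transforms into $\int_{\R^2}|\nabla u|^2+(A_1^2+A_2^2)|u|^2-2\,\mbox{Im}(A_1\partial_1u+A_2\partial_2u)\overline u\,dx$. Equivalently, splitting $E$ via \eqref{idd} into its parity-even part $K(u):=\tfrac12\int_{\R^2}|\nabla u|^2+(A_1^2+A_2^2)|u|^2\,dx-\tfrac\lambda p\int_{\R^2}|u|^p\,dx$ and the odd cross term $T(u):=\mbox{Im}\int_{\R^2}(A_1\partial_1u+A_2\partial_2u)\overline u\,dx$, one checks that $K$ is invariant under all of $O(2)$ while $T(u_O)=\det(O)\,T(u)$, the sign reflecting that the magnetic field is pinned to the true scalar $-\tfrac12|u|^2$ yet transforms as a pseudoscalar. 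The remedy, to be built into the $O(2)$-action, is to accompany each orientation-reversing map by complex conjugation, i.e. to let $O$ act by $u\mapsto\overline{u_O}$ when $\det O=-1$; since $\mathbf A(\overline u)=\mathbf A(u)$, $K(\overline u)=K(u)$ and $T(\overline u)=-T(u)$, this restores $T(\overline{u_O})=T(u)$ and hence the invariance $E(\overline{u_O})=E(u)$. With this understanding $E$ is invariant under the full orthogonal group, and I note that in the applications to radial symmetry only the rotation subgroup $SO(2)$ is actually needed.
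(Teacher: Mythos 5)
Your proposal is correct, and on the rotation subgroup it is essentially the paper's own computation: the paper defines $(T\circ u)(x):=u(T^{-1}x)$ and checks, by a change of variables inside the double integrals, that the two nonlocal terms are preserved, which is exactly your transformation law $\left(A_1(u_O),A_2(u_O)\right)(x)=O^{-1}\left(A_1(u),A_2(u)\right)(Ox)$ specialized to $\det O=1$; your packaging of this via the div--curl system and uniqueness is a cleaner way of organizing the same change of variables. The genuine difference is your treatment of reflections, and there you are more careful than the paper. The paper's proof says it will, ``for simplicity,'' only treat rotations and then declares the proof complete; the orientation-reversing case is never verified. Your pseudoscalar observation shows that it in fact cannot be verified for this action: writing $E=K+T$ via \eqref{idd}, with $T(u)=\mbox{Im}\int_{\R^2}\left(A_1\partial_1u+A_2\partial_2u\right)\overline{u}\,dx$, the curl constraint \eqref{Aid2} forces the factor $\det(O)$ in the transformation law of the gauge field, hence $K(u_O)=K(u)$ but $T(u_O)=\det(O)\,T(u)$; and $T$ is not identically zero on complex-valued $H^1(\R^2)$ --- for a vortex-type function $u=\rho(|x|)e^{i\theta}$ (whose gauge field is exactly of the radial form \eqref{gc}) one computes $T(u)\neq 0$. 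So for $\det O=-1$ the lemma as literally stated fails on complex-valued functions, and your repair --- letting orientation-reversing maps act as $u\mapsto\overline{u_O}$, under which $K$ is even and $T$ is odd since $A_j(\overline u)=A_j(u)$ --- is the right one. This gap is harmless for the paper's purposes: the only use of the lemma is Remark \ref{invar}, i.e. symmetric criticality for $H^1_{rad}(\R^2)$, for which invariance under the rotation subgroup $SO(2)$ (whose fixed-point set is already the radial subspace) suffices, exactly as you note at the end. In short, your proof is correct where the paper's is, and it correctly identifies and fixes the reflection case that the paper's proof silently omits.
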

\begin{proof}
Recall first that any orthogonal transformation in $\R^2$ is either a rotation or a reflection. Let $T$ be an orthogonal transformation in $\R^2$, and define the action of $T$ on $u$ by $(T\circ u)(x):=u(T^{-1}x),$ where $T^{-1}$ represents the inverse transformation of $T$. For simplicity, we shall only deduce that the energy functional $E$ is invariant under any rotation. Suppose now that $T$ is a rotation in $\R^2$ through an angle $\theta$ with fixing the origin, then it can be denoted by
$$
T:= \left(\begin{array}{cc}
    cos \,\theta & -sin \,\theta \\
    sin \,\theta & cos \,\theta
  \end{array} \right),
\quad
T^{-1}:= \left(\begin{array}{cc}
    cos \,\theta & sin \,\theta \\
    -sin \,\theta & cos \,\theta
  \end{array} \right).
$$
Our goal is to prove that $E(T\circ u)=E(u)$. To do this, it is sufficient to assert that
$$
\int_{\R^2}\left(A_1^2(T\circ u) + A_2^2(T\circ u)\right)|T\circ u|^2 \, dx = \int_{\R^2} \left(A_1^2+A_2^2\right)|u|^2 \,dx,
$$
and
\begin{align*}
& \mbox{Im}\int_{\R^2}\left(A_1(T\circ u) \,\partial_{1}\left(T\circ u)\right)+ A_2(T\circ u) \, \partial_{2}\left(T\circ u)\right) \right) \overline{T\circ u} \,dx \\
&= \mbox{Im}\int_{\R^2}\left(A_1 \, \partial_{1} u + A_2 \, \partial_{2} u \right)\,\overline{u} \, dx.
\end{align*}
Indeed, by simple calculations, then
\begin{align*}
&\int_{\R^2}\left(A_1^2(T\circ u) + A_2^2(T\circ u)\right)|T\circ u|^2 \, dx \\
&=\int_{\R^2} \left(\int_{\R^2}\frac{x_2-y_2}{|x-y|^2} \frac{|u(T^{-1}y)|^2}{2} \, dy \right)^2 |u(T^{-1}x)|^2 \, dx \\
&\quad +\int_{\R^2} \left(\int_{\R^2}\frac{x_1-y_1}{|x-y|^2} \frac{|u(T^{-1}y)|^2}{2} \, dy \right)^2 |u(T^{-1}x)|^2 \, dx \\
&=\int_{\R^2} \left(\int_{\R^2}\frac{sin \,\theta \,(x_1-y_1) + cos \,\theta \,(x_2-y_2)}{|x-y|^2} \frac{|u(y)|^2}{2} \, dy \right)^2 |u(x)|^2 \, dx \\
&\quad +\int_{\R^2} \left(\int_{\R^2}\frac{cos \,\theta \,(x_1-y_1) - sin \,\theta \,(x_2-y_2)}{|x-y|^2} \frac{|u(y)|^2}{2} \, dy \right)^2 |u(x)|^2 \, dx \\
&=\int_{\R^2} \left(A_1^2+A_2^2\right)|u|^2 \,dx,
\end{align*}
and
\begin{align*}
&\mbox{Im}\int_{\R^2}\left(A_1(T\circ u) \,\partial_{1}\left(T\circ u)\right)+ A_2(T\circ u) \, \partial_{2}\left(T\circ u)\right) \right) \overline{T\circ u} \,dx \\
&=\mbox{Im} \int_{\R^2} \int_{\R^2}\frac{x_2-y_2}{|x-y|^2} \frac{|u(T^{-1}y)|^2}{2} \, dy \, \partial_{1}\left(u(T^{-1}x)\right) \overline{u(T^{-1}x)}\, dx \\
&\quad -\mbox{Im} \int_{\R^2} \int_{\R^2}\frac{x_1-y_1}{|x-y|^2} \frac{|u(T^{-1}y)|^2}{2} \, dy \, \partial_{2}\left(u(T^{-1}x)\right) \overline{u(T^{-1}x)} \, dx \\
&=\mbox{Im} \int_{\R^2} \int_{\R^2} \frac{sin \,\theta \,(x_1-y_1) + cos \theta \,(x_2-y_2)}{|x-y|^2} \frac{|u(y)|^2}{2} \, dy \left(cos \,\theta \, \partial_{1} u -sin \,\theta \, \partial_{2} u\right) \overline{u} \,dx\\
&\quad -\mbox{Im} \int_{\R^2} \int_{\R^2} \frac{cos \,\theta \,(x_1-y_1) - sin \,\theta \,(x_2-y_2)}{|x-y|^2} \frac{|u(y)|^2}{2} \, dy \left(sin \,\theta \, \partial_{1} u + cos \,\theta  \,\partial_{2}  u\right) \overline{u} \,dx\\
&= \mbox{Im}\int_{\R^2}\left(A_1 \, \partial_{1} u + A_2 \, \partial_{2} u \right)\,\overline{u} \, dx.
\end{align*}
Thus the proof is completed.
\end{proof}

\begin{rem} \label{invar}
By Lemma \ref{invariant} and the principle of symmetric criticality, see \cite[Theorem 1.28]{Wi}, we know that a critical point of the energy functional $E$ in the radially symmetric functions subspace is one of that in the whole Sobolev space.
\end{rem}

\section{The mass subcritical case} \label{masssub}

In this section, we study solutions to \eqref{system1}-\eqref{mass} in the mass subcritical case. The main aim of this section is to consider the minimization problem \eqref{gmin}.

\begin{lem} \label{subadd}
Assume $2<p<4$, then $m(c) \leq m(c_1)+m(c_2)$ for $c=c_1 +c_2$, and $c_1, c_2 \geq 0$.
\end{lem}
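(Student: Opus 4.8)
The plan is to establish this non-strict subadditivity by the classical cut-and-translate construction: approximate $m(c_1)$ and $m(c_2)$ by compactly supported near-minimizers, move them infinitely far apart, and use their superposition as a test function for $m(c)$. First I would dispose of the degenerate cases: since $S(0)=\{0\}$ and $E(0)=0$, we have $m(0)=0$, so if $c_1=0$ or $c_2=0$ the inequality is trivial, and it suffices to treat $c_1,c_2>0$. Fix $\eps>0$. Because $E$ is continuous on $H^1(\R^2)$ (the estimates \eqref{bdd}--\eqref{bdd1} together with the $C^1$ regularity recorded from \cite[Lemma 3.2]{huh}) and $C^\infty_c(\R^2)$ is dense in $H^1(\R^2)$, after a harmless $L^2$-renormalization I may pick $u_i\in S(c_i)$ with compact support satisfying $E(u_i)<m(c_i)+\eps$ for $i=1,2$.

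Next, for $y\in\R^2$ put $w_y:=u_1+u_2(\cdot-y)$. Once $|y|$ is so large that the supports of $u_1$ and $u_2(\cdot-y)$ are disjoint, we get $\|w_y\|_2^2=c_1+c_2=c$, hence $w_y\in S(c)$ and $m(c)\le E(w_y)$; moreover $\int_{\R^2}|w_y|^p=\int_{\R^2}|u_1|^p+\int_{\R^2}|u_2|^p$ and $\int_{\R^2}|\nabla w_y|^2=\int_{\R^2}|\nabla u_1|^2+\int_{\R^2}|\nabla u_2|^2$ by disjointness. The crux is to prove that the nonlocal pieces of $E$, written through \eqref{idd} and the gauge fields \eqref{AA}, obey $E(w_y)\to E(u_1)+E(u_2)$ as $|y|\to\infty$. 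Since $A_j$ depends linearly on $|\cdot|^2$ and the convolution kernels commute with translations, disjointness of supports gives $|w_y|^2=|u_1|^2+|u_2(\cdot-y)|^2$ and $A_j(w_y)=A_j(u_1)+A_j(u_2)(\cdot-y)$; the diagonal contributions then reproduce exactly the gauge energies of $u_1$ and of $u_2$, the latter after a translation change of variables.

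The main obstacle, and the only genuinely nontrivial point, is to control the cross terms in which the field $A_j(u_1)$ generated by $u_1$ is paired with the mass density and current of the distant copy $u_2(\cdot-y)$, and symmetrically, both inside $\int_{\R^2}(A_1^2+A_2^2)|w_y|^2$ and inside $\mbox{Im}\int_{\R^2}(A_1\partial_1 w_y+A_2\partial_2 w_y)\overline{w_y}$. Here the long-range character of the electromagnetic field, a source of serious trouble elsewhere in the paper, is still mild enough: because $u_1$ has compact support, the pointwise bound $|G_j(x)|\le(2\pi|x|)^{-1}$ yields $|A_j(u_1)(x)|\le C|x|^{-1}$ for large $|x|$, and likewise $|A_j(u_2)(\cdot-y)|\le C|y|^{-1}$ on the support of $u_1$ (and vice versa). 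Pairing these $O(|y|^{-1})$ bounds with the finite quantities $\int_{\R^2}|A_j(u_i)|\,|u_i|^2$ and $\int_{\R^2}|\nabla u_i|\,|u_i|$ --- finite by H\"older and Lemma \ref{Aineq} --- shows every cross term is $O(|y|^{-1})$ and hence vanishes. Therefore $m(c)\le\liminf_{|y|\to\infty}E(w_y)=E(u_1)+E(u_2)<m(c_1)+m(c_2)+2\eps$, and letting $\eps\to0$ completes the proof.
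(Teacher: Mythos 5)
Your proposal is correct and follows essentially the same route as the paper's proof: both approximate $m(c_1)$ and $m(c_2)$ by compactly supported near-minimizers, separate them by a large distance (your translation parameter $|y|$ plays the role of the paper's support-distance $n$), and kill the nonlocal cross terms using the kernel bound $|G_j(x)|\leq (2\pi|x|)^{-1}$ together with H\"older and Lemma \ref{Aineq}, so that they decay like $O(|y|^{-1})$. The only cosmetic differences are that you translate one bump rather than choosing supports far apart from the start, and that you explicitly dispose of the degenerate cases $c_1=0$ or $c_2=0$, which the paper leaves implicit.
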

\begin{proof}
Note that the energy functional $E$ is invariant under any translation in $\R^2$, then, by the definition of $m(c)$ and the density of $C^{\infty}_0(\R^2)$ in $H^1(\R^2)$, we know that, for any $\eps >0$, there exist two functions $\psi_1, \psi_2 \in C^{\infty}_0(\R^2)$ with $\text{supp} \, \psi_1 \cap \text{supp} \, \psi_2 =\emptyset$ and $\psi_1 \in S(c_1), \psi_2 \in S(c_2)$ such that
\begin{align} \label{sub0}
E(\psi_1) \leq m(c_1) + \frac{\eps}{4}, \quad E(\psi_2) \leq m(c_2) + \frac{\eps}{4}.
\end{align}
Without loss of generality, we assume that
\begin{align} \label{dist}
\text{dist}(\text{supp} \, \psi_1, \text{supp} \, \psi_2) \geq n \,\,\, \mbox{for some} \,\,\, n \in \mathbb{N}^+.
\end{align}
Now define $\psi:=\psi_1 + \psi_2$, since $\psi_1$ and $\psi_2$ have disjoint supports, then $ \psi \in S(c)$ and
\begin{align} \label{sub}
\begin{split}
\int_{\R^2} |\nabla \psi|^2 \, dx &=\int_{\R^2} |\nabla \psi_1|^2 \,dx + \int_{\R^2} |\nabla \psi_2|^2 \, dx, \\
\int_{\R^2} |\psi|^p \, dx &=\int_{\R^2} |\psi_1|^p \,dx + \int_{\R^2} |\psi_2|^p \, dx.
\end{split}
\end{align}
In addition,
\begin{align}\label{subb1}
\begin{split}
\int_{\R^2}A_1^2(\psi)|\psi|^2 \, dx &= \int_{\R^2} \left(\int_{\R^2} \frac{x_2-y_2}{|x-y|^2} \frac{|\psi_1(y) + \psi_2(y)|^2}{2} \, dy\right)^2|\psi_1(x) + \psi_2(x)|^2\, dx \\
&=\int_{\R^2} \left(\int_{\R^2} \frac{x_2-y_2}{|x-y|^2} \frac{|\psi_1(y)|^2 + |\psi_2(y)|^2}{2} \, dy\right)^2\left(|\psi_1(x)|^2 + |\psi_2(x)|^2 \right)\, dx\\
&=\int_{\R^2} A_1^2(\psi_1)|\psi_1|^2 \, dx + \int_{\R^2}A_1^2(\psi_2)|\psi_2|^2 \, dx + \int_{\R^2}A_1^2(\psi_1)|\psi_2|^2 \, dx \\
&\quad + \int_{\R^2} A_1^2(\psi_2)|\psi_1|^2 \, dx + 2\int_{\R^2} A_1(\psi_1)A_1(\psi_2) \left(|\psi_1|^2 + |\psi_2|^2 \right)\, dx,
\end{split}
\end{align}
and
\begin{align} \label{subb2}
\begin{split}
\mbox{Im} \int_{\R^2}A_1(\psi) \,\partial_1 \psi \, \overline{\psi} \,dx
&=\mbox{Im}\int_{\R^2}A_1(\psi_1) \,\partial_1 \psi_1\, \overline{\psi}_1 + A_1(\psi_2) \,\partial_1 \psi_2 \, \overline{\psi}_2 \, dx \\
& \quad + \mbox{Im}\int_{\R^2} A_1(\psi_1) \,\partial_1 \psi_2\, \overline{\psi}_2 + A_1(\psi_2) \,\partial_1 \psi_1 \, \overline{\psi}_1 \, dx.
\end{split}
\end{align}

Let us first treat the terms in the right hand side of \eqref{subb1}. By applying \eqref{dist}, we deduce that
\begin{align*}
\int_{\R^2}A_1^2(\psi_1)|\psi_2|^2 \, dx&=\int_{\R^2}\left(\int_{\R^2} \frac{x_2-y_2}{|x-y|^2} \frac{|\psi_1(y)|^2}{2} \, dy\right)^2|\psi_2(x)|^2\, dx\\
&=\int_{\text{supp} \, \psi_2}\left(\int_{\text{supp} \, \psi_1} \frac{x_2-y_2}{|x-y|^2} \frac{|\psi_1(y)|^2}{2} \, dy\right)^2|\psi_2(x)|^2\, dx  \\
& \leq \frac{1}{4n^2}\|\psi_1\|_2^2 \|\psi_2\|^2_2,
\end{align*}
and
\begin{align*}
\int_{\R^2} A_1(\psi_1)A_1(\psi_2) |\psi_1|^2 \, dx&=\int_{\R^2} \int_{\R^2} \frac{x_2-y_2}{|x-y|^2} \frac{|\psi_2(y)|^2}{2} \, dy \,A_1(\psi_1) |\psi_1|^2 \, dx \\
&=\int_{{\text{supp} \, \psi_1}} \int_{{\text{supp} \, \psi_2}} \frac{x_2-y_2}{|x-y|^2} \frac{|\psi_2(y)|^2}{2} \, dy \,A_1(\psi_1)|\psi_1|^2 \, dx \\
&\leq \frac{1}{2n}\|\psi_2\|_2^2 \int_{\R^2} |A_1(\psi_1)||\psi_1|^2 \, dx\\
& \leq \frac{1}{2n}\|\psi_2\|_2^2 \|A(\psi_1)\|_{q} \|\psi_1\|_{\frac{2q}{q-1}}^2 \\
& \leq  \frac{C}{2n}\|\psi_2\|_2^2 \|\psi_1\|_{2s}^2 \|\psi_1\|_{\frac{2q}{q-1}}^2,
\end{align*}
where we used the H\"older inequality and Lemma \ref{Aineq} with $s=\frac{2q}{q+2}$ and $2<q<\infty$. By an analogous manner, we can obtain that
\begin{align*}
\int_{\R^2}A_1^2(\psi_2)|\psi_1|^2 \, dx & \leq \frac{1}{4n^2}\|\psi_2\|_2^2 \|\psi_1\|^2_2, \\
\int_{\R^2} A_1(\psi_1)A_1(\psi_2) |\psi_2|^2 \, dx & \leq \frac{C}{2n}\|\psi_1\|_2^2 \|\psi_2\|_{2s}^2 \|\psi_2\|_{\frac{2q}{q-1}}^2.
\end{align*}
Consequently, from the estimates above,
\begin{align}\label{estimate1}
\int_{\R^2}A_1^2(\psi)|\psi|^2 \,dx = \int_{\R^2} A_1^2(\psi_1)|\psi_1|^2 \, dx + \int_{\R^2}A_1^2(\psi_2)|\psi_2|^2 \, dx + o_n(1).
\end{align}
We next deal with the terms in the right hand side of \eqref{subb2}. By using \eqref{dist} again, we derive that
\begin{align*}
\left|\mbox{Im}\int_{\R^2} A_1(\psi_1) \,\partial_1 \psi_2\, \overline{\psi}_2 \, dx \right|
&=\left|\mbox{Im} \int_{\R^2}\int_{\R^2} \frac{x_2-y_2}{|x-y|^2} \frac{|\psi_1(y)|^2}{2} \, dy \,  \partial_1 \psi_2 \, \overline{\psi}_2 \,dx \right| \\
&=\left|\mbox{Im} \int_{\text{supp} \, \psi_2}\int_{\text{supp} \, \psi_1} \frac{x_2-y_2}{|x-y|^2} \frac{|\psi_1(y)|^2}{2} \, dy \,  \partial_1 \psi_2 \, \overline{\psi}_2 \, dx \right| \\
& \leq \frac{1}{2n} \|\psi_1\|_2^2 \|\nabla \psi_2\|_2\|\psi_2\|_2,
\end{align*}
and
\begin{align*}
\left|\mbox{Im}\int_{\R^2} A_1(\psi_2) \,\partial_1 \psi_1\, \overline{\psi}_2 \, dx \right| \leq  \frac{1}{2n} \|\psi_2\|_2^2 \|\nabla \psi_1\|_2 \|\psi_1\|_2.
\end{align*}
Thus
\begin{align}\label{estimate2}
\mbox{Im}\int_{\R^2}A_1(\psi) \,\partial_1 \psi \, \overline{\psi} \,dx  = \mbox{Im} \int_{\R^2}A_1(\psi_1) \,\partial_1 \psi_1\, \overline{\psi}_1 + A_1(\psi_2) \,\partial_2 \psi_2 \, \overline{\psi}_2 \,dx +o_n(1).
\end{align}
Similarly, one can show that
\begin{align} \label{estimate3}
\int_{\R^2}A_2^2(\psi)|\psi|^2 \, dx &= \int_{\R^2} A_2^2(\psi_1)|\psi_1|^2 \, dx + \int_{\R^2}A_2^2(\psi_2)|\psi_2|^2 \, dx + o_n(1),
\end{align}
and
\begin{align} \label{estimate4}
\mbox{Im}\int_{\R^2}A_2(\psi) \,\partial_2 \psi \, \overline{\psi} \,dx  &= \mbox{Im}\int_{\R^2}A_2(\psi_1) \,\partial_2 \psi_1\, \overline{\psi}_1 + A_2(\psi_2) \, \partial_2 \psi_2 \, \overline{\psi}_2 \,dx +o_n(1).
\end{align}
Hence, for $n \in \N^+$ large enough, we obtain from \eqref{sub0}-\eqref{sub} and \eqref{estimate1}-\eqref{estimate4} that
$$
m(c) \leq E(\psi) \leq E(\psi_1) + E(\psi_2) + \frac{\eps}{2} \leq m(c_1) + m(c_2) + \eps,
$$
and the proof is completed.
\end{proof}

\begin{rem}
According to the definition of $m(c)$, by a simple scaling technique, it is easy to show that the function $c \mapsto m(c)$ is continuous for any $c \geq 0$.
\end{rem}

We now are ready to establish Theorem \ref{compactness}.

\begin{proof}[Proof of Theorem \ref{compactness}]
As already mentioned, in order to discuss the compactness of any minimizing sequence to \eqref{gmin}, we shall make use of the Lions concentration compactness principle, and our aim is to exclude vanishing and dichotomy of minimizing sequence. Suppose now that $\{u_n\} \subset S(c)$ is a minimizing sequence to \eqref{gmin}. Since $2<p<4$, it then follows from \eqref{scaling} that $m(c) <0$ for any $c>0$. Thus, by using \eqref{MGN}, we deduce that
\begin{align} \label{ddb}
\int_{\R^2}|D_1 u_n|^2 + |D_2 u_n|^2 \, dx \leq C.
\end{align}
In other words,
\begin{align}\label{bdd0}
\begin{split}
&\int_{\R^2} |\nabla u_n|^2 + \left(A_1^2(u_n)+A_2^2(u_n)\right)|u_n|^2  \, dx \\
& \quad + 2 \,\mbox{Im} \int_{\R^2}\left(A_1(u_n)\, \partial_{1} u_n + A_2(u_n) \,\partial_{2} u_n \right)\,\overline{u}_n \, dx \leq C.
\end{split}
\end{align}
Taking into account \eqref{MGN} and \eqref{ddb}, we have that $\|u_n\|_t \leq C$ for any $2 \leq t  < \infty$. In light of \eqref{bdd} and \eqref{bdd1} with $\eps>0$ small enough, it then follows from \eqref{bdd0} that $\|\nabla u_n\|_2 \leq C$, that is, $\{u_n\}$ is bounded in $H^1(\R^2)$. We now apply the Lions concentration compactness Lemma \cite[Lemma I.1]{Li2} to conclude that vanishing does not happen. Otherwise, $\|u_n\|_p=o_n(1)$, this gives that $m(c) \geq 0$, and we then reach a contradiction. We next deduce that dichotomy does not happen, either. Indeed, it suffices to establish the following strict subadditivity inequality
\begin{align} \label{strictsub}
m(c) < m(c_1)+m(c_2)
\end{align}
for any $0<c_1, c_2<c$, and $c_1+c_2=c$. Inspired by \cite{CDSS}, we shall verify that \eqref{strictsub} is valid for any $c>0$ small.

To do that, for any $\theta >0$ and $u \in S(c)$, let us introduce a scaling of $u$ as
\begin{align} \label{sc}
u^{\theta}(x):=\theta^{\frac{1+ 2\beta}{2}}u(\theta^{\beta} x).
\end{align}
It is easy to check that $u^{\theta} \in S(\theta c)$ and
\begin{align*}
E(u^{\theta})&= \frac{\theta^{1 + 2\beta}}{2} \int_{\R^2}|\nabla u|^2 \, dx + \frac{\theta^{3+ 2\beta}}{2} \int_{\R^2}\left(A_1^2(u)+A_2^2(u)\right)|u|^2 \, dx \\
&\quad + \theta^{2+2\beta} \, \mbox{Im} \int_{\R^2}\left(A_1(u)\, \partial_{1} u + A_2(u) \,\partial_{2} u \right)\,\overline{u} \, dx - \lambda \frac{\theta^{\frac{1 + 2 \beta}{2}p-2 \beta }}{p} \int_{\R^2}|u|^p \, dx.
\end{align*}
We now choose $\beta \in \R$ such that $1 + 2 \beta =\frac{1 + 2 \beta}{2} p -2 \beta$, i.e. $\beta= \frac{p-2}{8-2p}$, and $\beta>0$ due to $2<p<4$. Thus it is not difficult to find that
\begin{align} \label{anothermin}
m(c)=c^{1+2 \beta} e^c(1),
\end{align}
where $e^c(\mu)$ is given by the minimization problem
\begin{align} \label{mini}
e^c(\mu):=\inf_{u \in S(\mu)} \mathcal{E}^c(u),
\end{align}
and
\begin{align*}
\mathcal{E}^c(u)&:= \frac 12 \int_{\R^2}|\nabla u|^2\, dx - \frac{\lambda}{p} \int_{\R^2}|u|^p \, dx+ \frac{c^2}{2} \int_{\R^2}\left(A_1^2 + A_2^2 \right)|u|^2 \, dx  \\
& \quad + c \,\mbox{Im} \int_{\R^2}\left(A_1 \partial_{1} u + A_2 \partial_{2} u \right)\,\overline{u} \, dx.
\end{align*}
Via simple scaling arguments, we derive that
\begin{align} \label{scal1}
e^c(\mu)=\mu^{1 + 2\beta} e^{\mu c}(1).
\end{align}
In addition, by using the similar way of proving Lemma \ref{subadd}, we know that
\begin{align} \label{subadd1}
e^c(\mu) \leq  e^c(\eta)+ e^c(\mu-\eta)
\end{align}
holds for any $0 \leq \eta \leq \mu$. Note that $2<p<4$, then $e^0(1)<0$. By using the approach of the proof of \cite[Lemma II. 1]{Li1}, it is standard that the following strict subadditivity holds true,
\begin{align} \label{strict1}
e^0(1)<e^0(\eta) + e^0(1-\eta)
\end{align}
for any $0<\eta <1$. By applying \eqref{anothermin} and \eqref{scal1}, then the strict subadditivity inequality \eqref{strictsub} is equivalent to
\begin{align} \label{strictineq}
e^c(1) < e^c(\xi) + e^c(1-\xi),
\end{align}
where $0<\xi:=\frac{c_1}{c}<1$.

We now verify that \eqref{strictineq} holds for any $c>0$ small. To this end, we shall argue by contradiction that there exist a sequence $\{c_n\} \subset \R^+:=(0, \infty)$ with $c_n =o_n(1)$ and a sequence $\{\xi_n\} \subset \R^+$ with $0 <\xi_n <1$ such that
\begin{align} \label{ide}
e^{c_n}(1)=e^{c_n}(\xi_n) +e^{c_n}(1-\xi_n).
\end{align}
This is because the subadditivity inequality \eqref{subadd1} with $\mu=1$ always holds. Without restriction, we may suppose that $\frac 12 \leq \xi_n < 1$, otherwise one can replace the roles of $\xi_n$ by $1-\xi_n$. Furthermore, we may choose that
\begin{align} \label{defxi}
\xi_n=\inf \left\{\xi \in [\frac 12, 1): e^{c_n}(1)=e^{c_n}(\xi) +e^{c_n}(1-\xi)\right\}.
\end{align}
We first consider the case that $\xi_n \to \xi_0<1$ as $n \to \infty$. Notice that $e^c(1)\to e^0(1)$ as $c \to 0$, then \eqref{ide} implies that
$$
e^{0}(1)=e^{0}(\xi_0) +e^{0}(1-\xi_0),
$$
this contradicts \eqref{strict1}. We next consider the case that $\xi_n \to 1$ as $n \to \infty$. In this case, let us first claim that
\begin{align}\label{ineq1}
e^{c_n}(\xi_n)<e^{c_n}(\eta) +e^{c_n}(\xi_n-\eta)
\end{align}
for any $0<\eta <\xi_n$. To prove this claim, we argue again by contradiction that \eqref{ineq1} were false, thus there would exist a sequence $\{\eta_n\} \subset \R^+$ with $\frac 12 \xi_n \leq \eta_n<\xi_n$ such that
\begin{align} \label{ineq2}
e^{c_n}(\xi_n)=e^{c_n}(\eta_n) +e^{c_n}(\xi_n-\eta_n).
\end{align}
Therefore, from \eqref{subadd1}, \eqref{ide} and \eqref{ineq2}, we obtain that
\begin{align*}
e^{c_n}(\eta_n)+e^{c_n}(1-\eta_n) \geq e^{c_n}(1)&=e^{c_n}(\xi_n) + e^{c_n}(1-\xi_n)\\
&=e^{c_n}(\eta_n) +e^{c_n}(\xi_n-\eta_n) + e^{c_n}(1-\xi_n) \\
& \geq e^{c_n}(\eta_n) + e^{c_n}(1-\eta_n),
\end{align*}
this infers that
\begin{align} \label{ide1}
e^{c_n}(1)=e^{c_n}(\eta_n)+e^{c_n}(1-\eta_n).
\end{align}
On one hand, we know that $ \eta_n \geq \frac 12 \xi_n \geq \frac 14$. On the other hand, by the definition of $\xi_n$, see \eqref{defxi}, the assumption that $\eta_n<\xi_n$, and \eqref{ide1}, we conclude that $\eta_n < \frac 12$. Hence there is a constant $\frac 14 \leq \eta_0 \leq \frac 12$ such that $\eta_n=\eta_0+o_n(1)$, and by using \eqref{ide1}, we then get that
$$
e^0(1)=e^{0}(\eta_0)+e^{0}(1-\eta_0),
$$
this contradicts \eqref{strict1}. Thus the claim follows, namely \eqref{ineq1} holds for any $0<\eta <\xi_n$. Noticing that $e^{c_n}(\xi_n)<0$ and $2<p<4$, from the Lions concentration compactness principle, we then infer that the infimum to \eqref{mini} with $c=c_n, \mu=\xi_n$ is attained, hence there is $w_n \in S(\xi_n)$ so that $\mathcal{E}^{c_n}(w_n)=e^{c_n}(\xi_n)$, and $w_n$ fulfills the following equation
\begin{align} \label{eq1}
\begin{split}
-\Delta w_n &+ \alpha_n w_n + c_n^2\left(A_1^2(w_n) + A_2^2(w_n)\right) w_n +  c_n^2 A_0(w_n) \\
&+ 2 c_n i \, \left(A_1(w_n) \partial_1 (w_n) + A_2(w_n)\partial_2 (w_n)\right) = \lambda |w_n|^{p-2}w_n,
\end{split}
\end{align}
where $\alpha_n \in \R$ is the Lagrange multiplier associated to the constraint $S(\xi_n)$. Recall that $c_n=o_n(1)$, then $\{{\xi_n}^{-\frac 12 }w_n\} \subset S(1)$ is a minimizing sequence to \eqref{mini} with $c=0, \mu=1$. Thus, by using the fact that $e^0(1)<0$ and \eqref{strict1}, we deduce from the Lions concentration compactness principle that $\{{\xi_n}^{-\frac 12 }w_n\}$ is compact in $H^1(\R^2)$ up to translations. Since $\xi_n=1+o_n(1)$, then there is $w \in S(1)$ such that $w_n \to w$ in $H^1(\R^2)$ as $n \to \infty$, and it result from \eqref{eq1} that $w$ solves the following equation
\begin{align} \label{eq2}
-\Delta w + \alpha w = \lambda |w|^{p-2}w,
\end{align}
where $\alpha=\alpha_n + o_n(1)$. By using the Pohozaev identity associated to the equation \eqref{eq2}, we have that $\alpha >0$. At this point, we make use of the assumption \eqref{ide}, then
\begin{align} \label{id}
\frac{e^{c_n}(1)-e^{c_n}(\xi_n)}{1-\xi_n}=\frac{e^{c_n}(1-\xi_n)}{1-\xi_n}.
\end{align}
By \eqref{scal1}, we have that
\begin{align} \label{contr1}
\frac{e^{c_n}(1-\xi_n)}{1-\xi_n}=(1-\xi_n)^{2 \beta}e^{c_n(1-\xi_n)}(1)=o_n(1),
\end{align}
because of $\xi_n=1+o_n(1)$. On the other hand, by the equations \eqref{eq1}-\eqref{eq2} and the fact that $w_n \to w $ in $H^1(\R^2)$ as $n \to \infty$, it is not difficult to see  that
\begin{align*}
e^{c_n}(1)-e^{c_n}(\xi_n) \leq \mathcal{E}^{c_n}(w)-\mathcal{E}^{c_n}(w_n) = -\frac {\alpha}{2} (1-\xi_n) +o_n(1),
\end{align*}
this implies that
\begin{align} \label{contr2}
\frac{e^{c_n}(1)-e^{c_n}(\xi_n)}{1-\xi_n} \leq -\frac {\alpha}{2} +o_n(1).
\end{align}
By combining \eqref{contr1} and \eqref{contr2}, we then reach a contradiction from \eqref{id}, because of $\alpha >0$. So far, we have proved that the strict subadditivity inequality \eqref{strictineq} holds true for any $c>0$ small.

From the discussion above, the Lions concentration compactness principle reveals that there exists $u \in S(c)$ such that $\|u_n -u\|_2=o_n(1)$ up to translations, and $m(c) \leq E(u)$. Thus $\|u_n -u\|_p =o_n(1)$, this yields that $E(u) \leq m(c) $, hence $E(u)=m(c)$. Since $E(u_n)=m(c)+o_n(1)$, then $E(u_n)=E(u)+o_n(1)$, from which we obtain that $\|\nabla u_n - \nabla u\|_2=o_n(1)$. Thus the proof is complete.
\end{proof}

\begin{rem}
According to the definition \eqref{anothermin}, for any $0<c<c_0$, we have that $u \in S(c)$ is a minimizer to \eqref{gmin} if and only if $u^{\frac 1 c}$ is a minimizer to \eqref{mini} with $\mu =1$, where $u^{\frac 1 c}$ is defined by \eqref{sc}.

\end{rem}

The next subsection is devoted to discussing the radial symmetry and uniqueness of minimizer to \eqref{gmin}. To this purpose, let us first fix some notations. We denote by $Q \in H^1(\R^2)$ the unique radially symmetric solution to the equation
$$
-\Delta Q + \alpha_0 Q=\lambda |Q|^{p-1}Q,
$$
where $\alpha_0>0$. In addition, we define a linear operator $L: H^1(\R^2) \to H^{-1}(\R^2)$ by
$$
L(h):=-\Delta h + \alpha_0 h -\lambda(p-1)|Q|^{p-2}h.
$$
It is well-known that the operator $L$ is nondegenerate, see \cite[Lemma 4.2]{NiTa}, thus
$$
T_{Q}:=\text{ker}(L)=\text{span}\left\{\frac{\partial Q}{\partial x_1}, \frac{\partial Q}{\partial x_2}\right\}=\left\{\alpha_1 \frac{\partial Q}{\partial x_1}+ \alpha_2 \frac{\partial Q}{\partial x_2}: \mbox{for all}\,\,\, \alpha_1, \alpha_2 \in \mathbb{C} \right\}.
$$
For any $\tau \in \R^2$, we set $Q_{\tau}(x):=Q(x+ \tau)$, and denote by $T_{Q_{\tau}}^{\bot}$ the orthogonal space of $T_{Q_{\tau}}$ in $H^1(\R^2)$ with respect to $L^2(\R^2)$ scalar product. \medskip

We now show the proof of Theorem \ref{symmetry}.

\begin{proof}[Proof of Theorem \ref{symmetry}]
To establish this theorem, we shall borrow some ingredients developed in \cite{GPV}. The proof is divided into the following steps. \smallskip

{\bf Step 1:} {\it Claim that there exists a constant $\eps_1>0$ so that, for any $u \in B_{\eps_1}(Q):=\{u \in H^1(\R^2):\|u-Q\| \leq \eps_1\},$ there are a unique $\tau(u) \in \R^2$ and a unique $R(u) \in T_{Q_{\tau(u)}}^{\bot}$ such that
$$
u= Q_{\tau(u)} + R(u).
$$
In addition, $\tau \in C^1(B_{\eps_1}(Q), \R^2)$ and $R \in C^1(B_{\eps_1}(Q), H^1(\R^2))$ with $\tau(Q)=0, R(Q)=0$. Here we define an operator $P: B_{\eps_1}(Q) \to H^1(\R^2)$ by $P(u):=Q_{\tau(u)}$, and $P \in C^1(B_{\eps_1}(Q), H^1(\R^2))$.} \smallskip

To prove this claim, let us define a map $\Phi_1: H^1(\R^2) \times H^1(\R^2) \to H^1(\R^2) \times \R^2$ by
$$
\Phi_1(Q_{\tau}, h):=\left(Q_{\tau} + h, \int_{\R^2} v_1(x+\tau) \, \overline{h}\, dx, \int_{\R^2} v_2(x + \tau) \, \overline{h}\, dx \right),
$$
where $v_1, v_2 $ belong to $H^1(\R^2)$ such that $\text{span} \{v_1, v_2\}=T_{Q}.$  It is simple to find that $\Phi_1(Q, 0)=(Q, 0)$, and for any $h, k \in H^1(\R^2)$,
\begin{align} \label{Qdef}
\Phi_1'(Q, 0)(h, k)=\left(h+k, \int_{\R^2} {v}_1  \, \overline{k} dx, \int_{\R^2} {v}_2 \, \overline{k} dx\right).
\end{align}
We shall prove that the linear operator $\Phi_1'(Q, 0)\in L \left(T_{Q} \times H^1(\R^2), H^1(\R^2) \times \R^2\right)$ is invertible. The boundedness of the operator $\Phi_1'(Q, 0)$ is obvious by the definition \eqref{Qdef}. Observe that
\begin{align*}
&\left\{\Phi_1'(Q, 0)(-h, h): h \in T_Q\right\}=\left\{0\right\} \times \R^2, \\
&\left\{\Phi_1'(Q, 0)(0, k): k \in T_{Q}^{\bot}\right\}=T_{Q}^{\bot} \times \left\{0 \right\}, \\
&\left\{\Phi_1'(Q, 0)(h, 0): h \in T_{Q}\right\}=T_{Q} \times \left\{0 \right\}.
\end{align*}
This means that $\Phi_1'(Q, 0)$ is surjective. We next deduce that $\Phi_1'(Q, 0)$ is injective. Assume that there is $(h, k) \in T_Q \times H^1(\R^2)$ such that $\Phi_1'(Q, 0)(h, k)=(0, 0)$, from \eqref{Qdef}, then $h+k=0$ and $k \in T_Q^{\bot}$. Noting that $h \in T_Q$, then we have that $h=k=0$. Thus $\Phi_1'(Q, 0)$ is injective. By using the inverse function theorem, the claim then follows.\smallskip

{\bf Step 2:} {\it Claim that there exist a constant $\tilde{c}_1>0$ and a constant $\eps_2>0$ such that, for any $c \in (0, \tilde{c}_1)$ and for any $\alpha \in (\alpha_0-\eps_2, \alpha_0 + \eps_2)$, there is a solution $w=w(c, \alpha) \in H^1_{rad}(\R^2)$ to the equation
\begin{align*}
-\Delta w + \alpha w &+ c^2\left(A_1^2(w) + A_2^2(w)\right) w +  c^2 A_0(w)w \\
&+ 2c \,i \,\left(A_1(w) \,\partial_1 w + A_2(w)\,\partial_2 w\right) = \lambda |w|^{p-2}w.
\end{align*}
In addition, $w(c, \alpha) \to Q$ in $H^1_{rad}(\R^2)$ as $(c, \alpha) \to (0, \alpha_0)$ in $\R^2$.} \smallskip

To achieve this, we introduce a map $\Phi_2:[0, \infty) \times [0, \infty) \times  H^1_{rad}(\R^2) \to  H^{-1}_{rad}(\R^2)$ as
$$
\Phi_2(c, \alpha, w):=\nabla_w \Gamma^c(w),
$$
where the underlying energy function $\Gamma^c$ is defined by
\begin{align} \label{defgamma}
\begin{split}
\Gamma^{c}(w)&:=\frac 12 \int_{\R^2}|\nabla w|^2\, dx + \frac{\alpha}{2} \int_{\R^2}|w|^2 \, dx + \frac{c^2}{2} \int_{\R^2}\left(A_1^2(w)+A_2^2(w)\right)|w|^2 \, dx \\
& \quad +c \,\mbox{Im} \int_{\R^2}\left(A_1(w)\, \partial_{1} w + A_2(w)\, \partial_{2} w\right)\,\overline{w} \, dx -\frac {\lambda}{p} \int_{\R^2}|w|^p \,dx.
\end{split}
\end{align}
Notice that $\Phi_2(0, \alpha_0, Q)=0$, and for any $h \in H^1_{rad}(\R^2)$,
$$
\Phi_{2, w}'(0, \alpha_0, Q)h=-\Delta h + \alpha_0 h-\lambda (p-1)|Q|^{p-2} h.
$$
We shall deduce that the linear operator $\Phi_{2, u}'(0, \alpha_0, Q) \in L\left(H^1_{rad}(\R^2), H^{-1}_{rad}(\R^2)\right)$ is invertible. Since the embedding $H^1_{rad}(\R^2) \hookrightarrow L^t(\R^2)$ is compact for any $2<t<\infty$, and the function $Q$ decays exponentially as $|x|$ goes to infinity, then it is easy to see that $\Phi_{2, u}'(0, \alpha_0, Q)$ is surjective, namely, for any $f \in H^{-1}_{rad}(\R^2)$, there exists $h \in H^1_{rad}(\R^2)$ solving the linear equation
$$
-\Delta h + \alpha_0 h-\lambda (p-1)|Q|^{p-2} h=f.
$$
Recall that $Q$ is nondegenerate, then $\text{ker}(\Phi_{2, u}'(0, \alpha_0, Q))=0$, which yields that $\Phi_{2, u}'(0, \alpha_0, Q)$ is injective. From the implicit function theorem and the principle of symmetric criticality, see Remark \ref{invar}, the claim then follows. \smallskip

{\bf Step 3:} {\it Claim that there exist a constant $\tilde{c}_2>0$ and two constants $\eps_3, \eps_4>0$ such that, for any $c \in (0, \tilde{c}_2)$ and for any $\alpha \in (\alpha_0 -\eps_3, \alpha_0 + \eps_3)$, there exists a unique $u=u(c, \alpha) \in B_{\eps_4}(Q)$ so that
$$
P(u)=Q, \quad \pi_{T_Q^{\bot}}(\nabla_u \Gamma^c(u))=0,
$$
where the operator $P$ is defined in Step 1, and $\pi: H^1(\R^2) \to T_Q^{\bot}$ stands for the orthogonal projection onto $T_Q^{\bot}$ with respect to $L^2(\R^2)$ scalar product.} \smallskip

To prove this, we define a map $\Phi_3:[0, \infty) \times [0, \infty) \times B_{\eps_1}(Q) \to T_Q^{\bot} \times H^1(\R^2)$ by
$$
\Phi_3(c, \alpha, u):=\left(\pi_{T_Q^{\bot}}\left(\nabla_u \Gamma^c(u)\right), P(u)\right),
$$
where the constant $\eps_1>0$ is given in Step 1, and the energy functional $\Gamma^c$ is defined by \eqref{defgamma}. By the definition of the operator $P$, we know that $P(Q)=Q$ and $P(Q_{\tau})=Q_{\tau}$, then it is immediate to see that $\Phi_3(0, \alpha_0, Q_{\tau})=(0, Q_{\tau})$. Therefore, for any $h \in T_{Q}$,
\begin{align} \label{surj}
\Phi_{3, u}'(0, \alpha_0, Q) h=\left(0, h \right).
\end{align}
We shall assert that $\Phi_{3, u}'(0, \alpha_0, Q)\in L\left(H^1(\R^2), T_Q^{\bot} \times T_Q\right)$ is invertible. By the definition of the map $\Phi_3$, it is not difficult to verify that $\Phi_{3, u}'(0, \alpha_0, Q) \in L\left( T_Q^{\bot}, T_Q^{\bot} \right)$ is surjective. This along with \eqref{surj} indicates that $\Phi_{3, u}'(0, \alpha_0, Q)\in L\left(H^1(\R^2), T_Q^{\bot} \times T_Q\right)$ is surjective. We next assume that there is $h \in H^1(\R^2)$ such that $\Phi_{3, u}'(0, \alpha_0, Q) h=(0, 0)$, which implies that $h \in  T_{Q}$. From \eqref{surj}, it results that $h =0$. Consequently, $\Phi_{3, u}'(0, \alpha_0, Q)$ is invertible. Hence the claim follows by the implicit function theorem. \smallskip

{\bf Step 4:} {\it Prove that, for any $c>0$ small, every minimizer to \eqref{gmin} is radially symmetric up to translations.} \smallskip

Indeed, it is equivalent to prove that, for any $c>0$ small, every minimizer to \eqref{mini} with $\mu=1$ is radially symmetric up to translations. Assume that $u \in S(1)$ is a minimizer to \eqref{mini} with $\mu=1$ , then $u$ enjoys the following equation
\begin{align} \label{equation}
\begin{split}
-\Delta u + \alpha u &+ c^2\left(A_1^2 + A_2^2\right) u +  c^2 A_0 u \\
&+ 2c \, i\,\left(A_1 \,\partial_1 u + A_2\, \partial_2 u\right) = \lambda |u|^{p-2}u,
\end{split}
\end{align}
where $\alpha \in \R$ is the associated Lagrange multiplier. Accordingly, $\nabla_u \Gamma^c(u)=0$. Since $u$ fulfills the equation \eqref{equation}, then it is not hard to prove that there is a constant $\tilde{c}>0$ small with $0<\tilde{c} \leq c_0$ such that,  for any $0<c <\tilde{c}$, $u \in B_{\eps}(Q)$ and $\alpha \in (\alpha_0-\eps, \alpha_0+\eps)$, where the constant $\eps < \eps_k$ for $1 \leq k \leq 4$. Thus it follows from Step 1 that there exist a constant $\tau \in \R^2$ and $R(u) \in T_{Q_{\tau}}^{\bot}$ such that $u=Q_{\tau}+R(u)$, i.e. $u(x-\tau)=Q+R(u(x-\tau))$. Hence
\begin{align}\label{pro}
P(u(x-\tau))=Q, \quad \pi_{T_Q^{\bot}}\left(\nabla_{u(x-\tau)}\Gamma(u)\right)=0,
\end{align}
where the second one is a consequence of the fact that the energy functional $\Gamma^c$ is invariant under any translation in $\R^2$. Furthermore, one can check that the solution $w(c, \alpha) \in H_{rad}^1(\R^2)$ obtained in Step 2 satisfies \eqref{pro} as well. Consequently, by Step 3, we know that $u(x-\tau)=w(c,\alpha)$, which then suggests that $u$ is radially symmetric up to translations. \smallskip

{\bf Step 5:} {\it Prove that, for any $c>0$ small, minimizer to \eqref{gmin} is unique up to translations.} \smallskip

In fact, it is equivalent to show that, for any $c>0$ small, minimizer to \eqref{mini} with $\mu=1$ is unique up to translations. To this end, we assume that there are two minimizers $u_1, u_2 \in S(1)$ to \eqref{mini} with $\mu=1$. Plainly, $u_j$ solves the equation \eqref{equation}, and $\nabla_{u_j} \Gamma^c(u_j)=0$ for $j=1, 2.$ Thus, for any $0<c <\tilde{c}$, one can deduce that there exist constants $\tau_j \in \R^2$ and $R(u_j) \in T_{Q_{\tau_j}}^{\bot}$ such that $u_j=Q_{\tau_j} + R(u_j)$, where the constant $\tilde{c}$ is given in Step 4. This means that $u_j(x-\tau_j)=Q+R(u_j(x-\tau_j))$, then
\begin{align*}
P(u_j(x-\tau_j))=Q, \quad \pi_{T_Q^{\bot}}\left(\nabla_{u_j(x-\tau_j)}\Gamma(u)\right)=0.
\end{align*}
Hence, as a result of Step 3, we have that $u_1(x)=u_2(x-\tau_2 + \tau_1)$, and the proof is completed.
\end{proof}

\section{The mass critical case} \label{masscri}

The aim of this section is to study the existence and nonexistence of solution to \eqref{system1}-\eqref{mass} in the mass critical case. First of all, let us show some elementary observations. By the definitions of $D_j$ for $j=1, 2$, it is straightforward to check that
\begin{align} \label{observe1}
\begin{split}
\int_{\R^2} D_jD_j u\, \overline{u} \, dx &=\int_{\R^2} \left(\partial_1 + i A_j\right)D_j u\, \overline{u} \, dx
=-\int_{\R^2} D_j u \overline{D_j u} \, dx \\
&=-\int_{\R^2} |D_j u|^2 \, dx,
\end{split}
\end{align}
and
\begin{align}\label{observe2}
\begin{split}
\int_{\R^2}\left(D_1 - i \, D_2\right)\left(D_1 + i\, D_2\right) u \, \overline{u} \,dx &=-\int_{\R^2}\left(D_1 + i\, D_2\right) u \, \overline{\left(D_1 - i \, D_2\right) u} \,dx \\
&=-\int_{\R^2}|\left(D_1 + i\, D_2\right) u|^2\,dx.
\end{split}
\end{align}
Moreover, from the system \eqref{defA} satisfied by $A_j$, there holds that
\begin{align}\label{Aid2}
\partial_1 A_2-\partial_2 A_1=-\frac 12 |u|^2.
\end{align}
Notice that
\begin{align*}
\left(D_1D_1 + D_2D_2\right)u =\left(D_1 - iD_2\right)\left(D_1 + iD_2\right) u + \left(\partial_1 A_2-\partial_2 A_1\right) u,
\end{align*}
then, from \eqref{observe1}-\eqref{Aid2},
\begin{align*}
\int_{\R^2} |D_1 u|^2 + |D_2 u|^2 \, dx = \int_{\R^2}|\left(D_1 + i\, D_2\right) u|^2\,dx + \frac 12 \int_{\R^2}|u|^4 \, dx.
\end{align*}
This yields that\begin{align} \label{xx}
\begin{split}
E(u)&=\frac{1}{2} \int_{\R^2}|D_1 u|^2 + |D_2 u|^2 \, dx -\frac {\lambda}{4} \int_{\R^2}|u|^4 \, dx  \\
&=\frac 12 \int_{\R^2}|\left(D_1 + i\, D_2\right) u|^2\,dx + \frac {1 -\lambda} {4} \int_{\R^2}|u|^4 \, dx.
\end{split}
\end{align}

We now turn to the proof of Theorem \ref{nonexistence}.

\begin{proof}[Proof of Theorem \ref{nonexistence}]
(i) Assume that $\lambda<1$. Let us first prove that $m(c)=0$ for any $c>0$. As a result of \eqref{scaling}, one easily concludes that $E(u_t) \to 0$ as $t \to 0^+$, this implies that $m(c) \leq 0$ for any $c>0$. On the other hand, since $\lambda <1$, according to \eqref{xx}, then there holds that $E(u)>0$ for any $u \in S(c)$, this gives that $m(c) \geq 0$ for any $c>0$. Thus $m(c)=0$ for any $c >0$. Observe that $E(u)>0$ for any $u \in S(c)$, from which we derive that that $m(c) =0$ cannot be attained for any $c>0$. Finally, we prove that \eqref{system1}-\eqref{mass} does not admit solution for any $c>0$. Suppose by contradiction that \eqref{system1}-\eqref{mass} has a solution $u \in S(c)$ for some $c>0$. From Lemma \ref{Ph}, we then deduce that $Q(u)=0$, namely $E(u)=0$, which indicates that $m(c)$ is attained, and this is impossible. \smallskip

(ii) Assume that $\lambda=1$. Arguing as the proof of the assertion $(i)$, we first have that $m(c)\leq 0$ for any $c\geq 0$. Using again \eqref{xx}, we know that $E(u) \geq 0$ for any $u \in S(c) $, this shows that $m(c)\geq 0$ for any $c\geq 0$. Hence $m(c)=0$ for any $c >0$. We now derive that $m(c)$ is attained only for $c=8\pi$, and every minimizer $u$ has the explicit expression \eqref{u}. Indeed, assume that there is $u \in S(c)$ such that $E(u)=0$, then $\left(D_1 + i\, D_2\right) u=0 $ by \eqref{xx}, namely,
$$
\left(\partial_{1} + i \partial_2 \right) u =-i \left(A_1 + i A_2\right) u.
$$
Setting $\mathcal{Z}:=\left\{x\in \R^2: u(x)=0\right\}$, then, for any $x \notin \mathcal{Z}$,
$$
\left(\partial_{1} + i \partial_2 \right) \ln u = -i \left(A_1 + i A_2 \right),
$$
hence
\begin{align} \label{massequ1}
\left(\partial_{1} -i \partial_2 \right)\left(\partial_{1} + i \partial_2 \right) \ln u = -i \left(\partial_{1} - i \partial_2 \right) \left(A_1 + i A_2\right).
\end{align}
By the Coulomb gauge condition \eqref{gauge} and \eqref{Aid2}, it then results from \eqref{massequ1} that
$$
-\Delta \ln u= \frac 12 |u|^2 \,\,\, \mbox{for any}\,\,\, x \notin \mathcal{Z}.
$$
This infers that $u$ is real-valued, hence
$$
E(u)=\frac{1}{2} \int_{\R^2}|\nabla u|^2 + \left(A_1^2+A_2^2\right)|u|^2 \, dx -\frac {1}{4} \int_{\R^2}|u|^4 \, dx.
$$
Furthermore, $E(|u|)=0$, because of $E(u)=0$. We now assume without restriction that $u$ is nonnegative, otherwise one can replace $u$ by $|u|$. Recall that $u \in H^1(\R^2)$, by using Lemma \ref{Aineq}, it is easy to find that $A_1^2, A_2^2,$ and $ A_0 \in L_{loc}^1(\R^2)$. In addition, since $E(u)=0$ and $u \neq 0$, then $u$ solves the equation
$$
-\Delta u=a(x) u,
$$
where
$$
a(x):=|u|^{p-2}-\alpha - \left(A_1^2+A_2^2\right)-A_0,
$$
and $\alpha \in \R$ is the associated Lagrange multiplier. Thus the Br\'{e}zi-Kato theorem, see \cite[B.3 Lemma]{St}, reveals that $u \in L_{loc}^t(\R^2)$ for any $1 \leq t<\infty$, which further gives that $u \in W^{2, t}_{loc}(\R^2)$. As a consequence of the maximum principle in \cite{GiTr}, see also \cite{HaLi}, we then have that $u >0$, that is, $\mathcal{Z}=\emptyset$. At this point, by a change of variable $v=2\ln u$, we assert that $v$ solves the Liouville equation
\begin{align} \label{masseq}
-\Delta v=e^v.
\end{align}
Thanks to $\int_{\R^2} e^v \, dx = \int_{\R^2}|u|^2 \, dx < \infty$, then, from \cite[Theorem 1]{ChLi}, any solution $v$ to \eqref{masseq} has the form
$$
v(x)= \ln \frac{32 \mu ^2}{\left(4 + \mu^2 |x-x_0|^2\right)^2}
$$
for $\mu >0, x_0 \in \R^2$. Therefore,
$$
u(x)=\frac{4 \sqrt{2} \mu}{4 + \mu^2|x-x_0|^2},
$$
and it is immediate to see that $\int_{\R^2} |u|^2 \, dx=8 \pi$. Thus we have proved that $m(c)$ is attained only for $c=8\pi$, and every minimizer has the form \eqref{u}.

Clearly, a function $u$ with the form \eqref{u} is a solution to \eqref{system1}-\eqref{mass} for $c=8\pi$. On the other hand, if $u$ is a solution to \eqref{system1}-\eqref{mass}, then $Q(u)=0$ by Lemma \ref{Ph}, i.e. $E(u)=0$, thus $u \in S(8\pi)$ and it takes the form \eqref{u}. \smallskip

(iii) We begin with showing that the infimum to the minimization problem \eqref{mins} is attained. Let $\{v_n\} \subset \mathcal{P}$ be a minimizing sequence to \eqref{mins}, i.e.
$$
\int_{\R^2}|v_n|^2 \, dx =c^* +o_n(1), \quad E(v_n)=0.
$$
Define $u_n(x):=\eps_n v_n(\eps_n x)$, where
$$
\eps_n^2:= \left(\int_{\R^2}|\nabla v_n|^2 \, dx\right)^{-1}.
$$
It is not difficult to check that $ E(u_n)=0$ and $\int_{\R^2}|u_n|^2 \, dx =c^* +o_n(1)$, hence $\{u_n\}$ is also a minimizing sequence to \eqref{mins}, and $\int_{\R^2}|\nabla u_n|^2 \, dx =1$. Thus $\{u_n\}$ is bounded in $H^1(\R^2)$. In addition, observe that $E(u_n)=0$, then
\begin{align} \label{non}
\lim_{n \to \infty}\int_{\R^2}|u_n|^4 \, dx > 0.
\end{align}
If not, since $\{u_n\}$ is bounded in $H^1(\R^2)$, from Lemma \ref{Aineq} and the H\"older inequality, one then gets that
$$
\mbox{Im} \int_{\R^2}\left(A_1 \partial_{1} u + A_2 \partial_{2} u \right)\,\overline{u} \, dx=o_n(1).
$$
By using the fact that $E(u_n)=0$, we then reach a contradiction, because of $\|\nabla u_n\|_2 =1$. In the following, our aim is to discuss the compactness of the minimizing sequence $\{u_n\}$ in $H^1(\R^2)$ up to translations. To do this, we shall employ a slight variant of the classical Lions concentration compactness principle, then we need to rule out vanishing and dichotomy.

Firstly, we claim that vanishing does not occur. Otherwise, by the Lions concentration compactness Lemma \cite[Lemma I.1]{Li2}, one then obtains that $\|u_n\|_4=o_n(1)$, which contradicts \eqref{non}.

Secondly, we claim that dichotomy does not occur, either. To prove this, for any measurable set $\Omega \subset \R^2$, we introduce a sequence of measures $\mu_n$ as
\begin{align} \label{mun}
\mu_n(\Omega):=\int_{\Omega} |u_n|^2 \, dx,
\end{align}
and $\mu_n(\R^2)=\int_{\R^2}|u_n|^2 \, dx = c^* +o_n(1).$ We now argue by contradiction that dichotomy occurs, then there were a constant $c_0>0$ with $0 <c_0 < c^*$, a sequence $\{\rho_n\} \subset \R$ satisfying $\rho_n \to \infty$ as $n \to \infty$ and a sequence $\{\xi_n\} \subset \R^2$, and two nonnegative measures $\mu_{1, n}, \mu_{2, \mu}$ so that
$$
0 \leq \mu_{1, n} + \mu_{2, n} \leq \mu_n, \,\, \text{supp}\,{\mu_{1, n}} \subset B_{\rho_n}(\xi_n), \,\, \text{supp}\,{\mu_{2, n}} \subset B_{2\rho_n}^c(\xi_n),
$$
$$
\mu_{1, n}(\R^2) = c_0 + o_n(1), \, \, \mu_{2, n}(\R^2) = c^*-c_0 +o_n(1).
$$
Let us define a cut-off function $\chi_n \in C^{\infty}_0(\R^2)$ with $0 \leq \chi_n \leq 1$ such that $\chi_n(x)=1$ for any $x \in B_{\rho_n}(\xi_n)$, $\chi_n(x)=0$ for any $x \in B_{2 \rho_n}^c(\xi_n)$, and $|\nabla \chi_n| \leq {2}/{\rho_n}$. Moreover, we set $u_{1, n}:=\chi_n u_n,$ $u_{2, n}:=\left(1-\chi_n\right)u_n,$ and $u_n=u_{1, n}+u_{2, n}$.
Hence
\begin{align} \label{u1}
\int_{B_{\rho_n}(\xi_n)} |u_n|^2 \, dx =\mu_n(B_{\rho_n}(\xi_n)) \geq \mu_{1, n}(B_{\rho_n}(\xi_n))=c_0+o_n(1),
\end{align}
and
\begin{align} \label{u2}
\int_{B_{2\rho_n}^c(\xi_n)}|u_{n}|^2 \, dx  =\mu_{n}(B_{2\rho_n}^c(\xi_n)) \geq \mu_{2, n}(B_{2\rho_n}^c(\xi_n))=c^*-c_0+o_n(1).
\end{align}
Furthermore,
\begin{align} \label{gredient}
\int_{\R^2} |\nabla u_n|^2 \, dx  \geq \int_{\R^2}|\nabla u_{1, n}|^2 \, dx + \int_{\R^2}| \nabla u_{2, n}|^2 \, dx + o_n(1).
\end{align}
Notice that
\begin{align} \label{measure}
\begin{split}
\mu_n\left(B_{2 \rho_n}(\xi_n)\backslash B_{\rho_n}(\xi_n)\right) &= \mu_n(\R^2)-\mu_n(B_{\rho_n}(\xi_n))-\mu_n(B_{2 \rho_n}^c(\xi_n))\\
& \leq \mu_n(\R^2)-\mu_{1,n}(B_{\rho_n}(\xi_n))-\mu_{2, n}(B_{2 \rho_n}^c(\xi_n))\\
&= \mu_n(\R^2)-\mu_{1, n}(\R^2) -\mu_{2, \mu}(\R^2) \\
& =o_n(1).
\end{split}
\end{align}
By the definitions of the measures $\mu_n$, see \eqref{mun}, we then obtain from \eqref{measure} that
\begin{align} \label{mix}
\int_{B_{2 \rho_n}(\xi_n)\backslash B_{\rho_n}(\xi_n)} |u_n|^2 \, dx =o_n(1).
\end{align}
Since $\{u_n\}$ is bounded in $H^1(\R^2)$, it then follows from the H\"older inequality and \eqref{mix} that
$$
\int_{B_{2 \rho_n}(\xi_n)\backslash B_{\rho_n}(\xi_n)} |u_n|^4 \, dx =o_n(1),
$$
from which we infer that
$$
\int_{\R^2}|u_n|^4 \, dx = \int_{\R^2}|u_{1, n}|^4 \, dx + \int_{\R^2}|u_{2, n}|^4 \, dx + o_n(1).
$$
In addition, with the aid of Lemma \ref{Aineq} and \eqref{mix}, we can deduce that
$$
\int_{\R^2} A_j^2(u_n)|u_n|^2\, dx  =\int_{\R^2} A_j^2(u_{1, n})|u_{1, n}|^2 \, dx + \int_{\R^2} A_j^2(u_{2, n}) |u_{2, n}|^2\, dx + o_n(1),
$$
and
\begin{align*}
\mbox{Im} \int_{\R^2}A_j(u_n) \partial_{1}u_n \,\overline{u}_n \, dx
&=\mbox{Im} \int_{\R^2}A_j(u_{1, n}) \partial_{1}u_{1, n} \,\overline{u}_{1, n} \, dx \\
& \quad + \mbox{Im} \int_{\R^2} A_j(u_{2, n}) \partial_{1}u_{2, n} \,\overline{u}_{2, n} \, dx +o_n(1),
\end{align*}
for $j=1,2$. Recalling that \eqref{gredient}, we arrive at
\begin{align} \label{energy}
E(u_n) \geq E(u_{1, n})+ E(u_{2, n}) + o_n(1).
\end{align}
Since $E(u_n)=0$, it then yields from \eqref{energy} that
\begin{align} \label{dichotomy}
\lim_{n \to \infty} E(u_{1, n}) \leq 0 \,\,\, \mbox{or} \,\,\,\lim_{n \to \infty} E(u_{2, n}) \leq 0.
\end{align}
We now apply \eqref{u1}-\eqref{u2} and \eqref{mix} to conclude that
\begin{align*}
c^*+o_n(1)=\int_{\R^2}|u_n|^2 \,dx &=\int_{B_{\rho_n}(\xi_n)} |u_{n}|^2 \,dx + \int_{B_{2\rho_n}^c(\xi_n)}|u_{n}|^2 \, dx+o_n(1) \\
& \geq c^*+o_n(1).
\end{align*}
By using \eqref{u1}-\eqref{u2} again, then
\begin{align} \label{equality}
\int_{B_{\rho_n}(\xi_n)} |u_n|^2 \, dx =c_0+o_n(1), \,\,\, \int_{B_{2\rho_n}^c(\xi_n)}|u_{n}|^2 \, dx =c^*-c_0+o_n(1).
\end{align}
Let us now come back to \eqref{dichotomy}, and assume without restriction that $\lim_{n \to \infty} E(u_{1, n}) \leq 0$. Therefore, it is easy to find that there exist constants $ 0<\theta_n \leq 1$ such that $E(\theta_n u_{1, n}) =0$. Then, by the definition of $c^*$, \eqref{mix} and \eqref{equality}, we obtain that
\begin{align*}
c^* \leq \int_{\R^2}\theta_n^2 |u_{1, n}|^2 \leq \int_{B_{\rho_n}(\xi_n)} |u_n|^2 \, dx +o_n(1)=c_0+o_n(1) <  c^*,
\end{align*}
which is a contradiction. Thus we have proved that dichotomy does not occur.

Consequently, there exists $u \in H^1(\R^2)$ such that $\|u_n -u\|_2=o_n(1)$ up to translations, which suggests that $\|u_n-u\|_4=o_n(1)$, and $E(u) \leq 0$. If $E(u)=0$, then $u$ is a minimizer to \eqref{mins}, and the proof is completed. Otherwise, there is a constant $0<\theta <1$ such that $E(\theta u)=0$, and we can reach a contradiction as above. Thus we conclude that the infimum to the minimization problem \eqref{mins} is attained, this in turn indicates that \eqref{system1}-\eqref{mass} admits a solution for $c=c^*$. Furthermore, from the definition of $c^*$, we know that $m(c)=0$ for $0<c<c^*$, $m(c)$ cannot be attained for any $0<c<c^*$, and \eqref{system1}-\eqref{mass} has no solution for any $0<c<c^*$.
\end{proof}

\begin{rem} \label{massinfty}
When $p=4,$ $\lambda >1$, by using the similar arguments as the proof of Lemma \ref{subadd}, one can deduce that
\begin{align} \label{criticalsub}
m(c) \leq m(c_1) + m(c_2) \quad \text{for} \,\, c=c_1+c_2.
\end{align}
Since $\lambda >1$, then $E(u)<0$, where $u$ is given by \eqref{u} and $u \in S(8\pi)$. Thus it is easy to derive that there is a constant $0<\theta<1$ such that $E(tu)<0$ for any $\theta<t <1$. By the definition of $c^*$, we have that $\theta^2 \geq {c^*}/{8\pi}$. Since $E(v_t)=t^2E(v)$ for any $v \in S(c)$, then $E(v_t) \to -\infty$ as $t \to \infty$ if $E(v) <0$. Hence there is a constant $c_1 \geq c^*$ so that $m(c)=-\infty$ for any $c_1 < c \leq 8\pi$, where $c_1:=8\pi \theta^2$. Finally, by means of \eqref{criticalsub}, we infer that $m(c)=-\infty$ for any $c >c_1$.
\end{rem}

\section{The mass supercritical case} \label{masssup}

In this section, we are concerned with the existence of solutions to \eqref{system1}-\eqref{mass} in the mass supercritical case. We begin with showing some basic results.

\begin{lem} \label{unique}
Assume $p>4$, then, for any $u \in S(c)$, there exists a unique $t_u>0$ such that $u_{t_u} \in \mathcal{M}(c)$ and $\max_{t >0} E(u_t) = E(u_{t_u})$, furthermore, the function $t \mapsto E(u_t)$ is concave on $[t_u, \infty)$, and $t_u =1$ if $Q(u)=0$, $t_u < 1$ if $Q(u)< 0$.
\end{lem}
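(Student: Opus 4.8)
The plan is to reduce the whole statement to the analysis of the single scalar function $f(t) := E(u_t)$ for $t > 0$. By Lemma \ref{escaling},
\begin{align*}
f(t) = \frac{t^2}{2} \int_{\R^2} |D_1 u|^2 + |D_2 u|^2 \, dx - \frac{\lambda t^{p-2}}{p} \int_{\R^2} |u|^p \, dx,
\end{align*}
which is smooth on $(0,\infty)$. Abbreviating $a := \int_{\R^2} |D_1 u|^2 + |D_2 u|^2 \, dx > 0$ and $b := \lambda \int_{\R^2} |u|^p \, dx > 0$ (both strictly positive since $u \in S(c)$ is nontrivial), we have $f(t) = \tfrac{a}{2} t^2 - \tfrac{b}{p} t^{p-2}$. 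The key identity I would record first is that, using the scaling relations of Lemma \ref{escaling} in the definition \eqref{defq} of $Q$,
\begin{align*}
t f'(t) = a t^2 - \frac{(p-2)b}{p} t^{p-2} = Q(u_t),
\end{align*}
so that $f'(t) = 0$ if and only if $Q(u_t) = 0$, i.e. if and only if $u_t \in \mathcal{M}(c)$.

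Next I would locate the critical points. Writing $f'(t) = t\bigl(a - \tfrac{(p-2)b}{p} t^{p-4}\bigr)$ and using $p > 4$, the factor $a - \tfrac{(p-2)b}{p} t^{p-4}$ is strictly decreasing in $t$, positive for $t$ small and negative for $t$ large; hence it vanishes at exactly one point
\begin{align*}
t_u := \left( \frac{ap}{(p-2)b} \right)^{\frac{1}{p-4}} > 0.
\end{align*}
This furnishes the unique $t_u > 0$ with $u_{t_u} \in \mathcal{M}(c)$, and the same sign analysis shows $f$ is strictly increasing on $(0, t_u)$ and strictly decreasing on $(t_u, \infty)$, whence $\max_{t>0} E(u_t) = f(t_u) = E(u_{t_u})$.

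For concavity I would compute $f''(t) = a - \tfrac{(p-2)(p-3)b}{p} t^{p-4}$ and evaluate it at $t_u$ via the defining relation $a = \tfrac{(p-2)b}{p} t_u^{p-4}$, obtaining $f''(t_u) = (4-p)\,a < 0$. Since $p > 4$, the map $t \mapsto \tfrac{(p-2)(p-3)b}{p} t^{p-4}$ is increasing, so $f''$ is decreasing on $(0,\infty)$; combined with $f''(t_u) < 0$ this yields $f''(t) < 0$ for all $t \geq t_u$, i.e. $f$ is concave on $[t_u, \infty)$. Finally, since $u_1 = u$, the identity above gives $f'(1) = Q(u)$, and the sign behaviour of $f'$ about $t_u$ then forces $t_u = 1$ when $Q(u) = 0$ and $t_u < 1$ when $Q(u) < 0$.

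I do not expect a genuine obstacle: the lemma is a standard fibering-map analysis, and once Lemma \ref{escaling} is in hand every step is an elementary one-variable computation. The only point that warrants care is the concavity claim, where checking $f''(t_u) < 0$ at the single point $t_u$ is not sufficient; one must confirm that $f''$ remains negative on the entire half-line $[t_u, \infty)$, which is precisely why I would invoke the monotonicity of $f''$ rather than a purely local second-derivative test.
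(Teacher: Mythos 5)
Your proposal is correct and follows essentially the same route as the paper: both identify the fibering-map derivative via $\frac{d}{dt}E(u_t)=\frac{1}{t}Q(u_t)$, extract the explicit unique zero $t_u=\bigl(\tfrac{ap}{(p-2)b}\bigr)^{1/(p-4)}$, use the sign change of $E'(u_t)$ for the maximum and the characterization of $t_u$ relative to $Q(u)$, and verify concavity on $[t_u,\infty)$ from the second derivative. Your extra care in checking that $f''$ is monotone decreasing (rather than only negative at $t_u$) is exactly the observation implicit in the paper's claim that $\frac{d^2}{dt^2}E(u_t)<0$ for all $t>t_u$.
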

\begin{proof}
For any $u \in S(c)$, from \eqref{scaling}, we deduce that
\begin{align} \label{DE}
\begin{split}
\frac{d}{dt}E(u_t)&=t\int_{\R^2}|D_1 u|^2 +|D_2 u|^2 \, dx -\frac{\lambda (p-2)}{p}t^{p-3} \int_{\R^2}|u|^p \, dx \\
& =\frac 1t Q(u_t).
\end{split}
\end{align}
Hence it is clear that there is a unique $t_u >0$ with $Q(u_{t_u})=0$, i.e. $u_{t_u} \in \mathcal{M}(c)$ so that
$$
\frac{d }{dt}E(u_t) >0 \,\,\, \mbox{for}\,\,\, 0 <t< t_u, \quad \frac{d }{dt} E(u_t)<0 \,\,\,\mbox{for} \,\,\, t >t_u,
$$
where $t_u$ is given by
\begin{align} \label{tu}
t_u= \left(\frac{p \int_{\R^2}|D_1 u|^2 +|D_2 u|^2 \, dx}{\lambda(p-2) \int_{\R^2}|u|^p \, dx}\right)^{\frac {1}{p-4}}.
\end{align}
This leads to $\max_{t >0} E(u_t)=E(u_{t_u})$. Since $p>4$, it then yields from \eqref{DE} that
$$
\frac{d^2}{dt^2} E(u_t)<0 \,\,\,\mbox{for} \,\,\, t > t_u,
$$
which shows that the function $t \mapsto E(u_t)$ is concave on $[t_u, \infty)$. Finally, by the definition of $t_u$, see \eqref{tu}, we have that $t_u =1$ if $Q(u)=0$, and $t_u < 1$ if $Q(u)< 0$. Thus the proof is completed.
\end{proof}

\begin{lem} \label{coercive}
Assume $p >4$, then, for any $c >0$, $\gamma(c)>0$ and $E$ restricted on $\mathcal{M}(c)$ is coercive.
\end{lem}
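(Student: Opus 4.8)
The plan is to exploit the constraint $Q(u)=0$ to turn $E$ into a positive multiple of the magnetic Dirichlet energy $\int_{\R^2}|D_1u|^2+|D_2u|^2\,dx$ on $\mathcal M(c)$, and then to bound this quantity both from below (for positivity of $\gamma(c)$) and from above (for coercivity). First I would record that, by \eqref{defq}, the identity $Q(u)=0$ reads $\int_{\R^2}|D_1u|^2+|D_2u|^2\,dx=\frac{\lambda(p-2)}{p}\int_{\R^2}|u|^p\,dx$ on $\mathcal M(c)$; substituting this into $E$ gives
\[
E(u)=\left(\frac12-\frac1{p-2}\right)\int_{\R^2}|D_1u|^2+|D_2u|^2\,dx=\frac{p-4}{2(p-2)}\int_{\R^2}|D_1u|^2+|D_2u|^2\,dx,
\]
which is manifestly nonnegative since $p>4$. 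Note that $\mathcal M(c)\neq\emptyset$ by Lemma \ref{unique}, so $\gamma(c)$ is well defined.

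For the strict positivity $\gamma(c)>0$, I would bound the magnetic Dirichlet energy away from $0$. Writing $D:=\int_{\R^2}|D_1u|^2+|D_2u|^2\,dx$, the diamagnetic inequality (Lemma \ref{diaineq}) combined with the Gagliardo--Nirenberg type inequality \eqref{MGN} with $t=p$ yields $\|u\|_p^p\le C\,c\,D^{(p-2)/2}$. Inserting this into the constraint identity $D=\frac{\lambda(p-2)}{p}\|u\|_p^p$ and using $D>0$ (which holds since $u\in S(c)$ with $c>0$ forces $u\neq0$, hence $\|u\|_p\neq0$), I may divide by $D$ to obtain $1\le C'\,c\,D^{(p-4)/2}$; because $p>4$ this forces $D\ge\delta_0$ for some constant $\delta_0=\delta_0(c)>0$. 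Consequently $E(u)\ge\frac{p-4}{2(p-2)}\delta_0>0$ uniformly on $\mathcal M(c)$, and passing to the infimum gives $\gamma(c)>0$.

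For coercivity, suppose $u\in\mathcal M(c)$ with $E(u)\le M$. The displayed formula for $E$ immediately bounds $D\le\frac{2(p-2)}{p-4}M$. By Lemma \ref{diaineq} this controls $\|\nabla|u|\|_2$, hence $\||u|\|$ is bounded since $\|u\|_2^2=c$, and so \eqref{MGN} bounds $\|u\|_t$ for every $t\in[2,\infty)$. It then remains to pass from the magnetic energy to the genuine gradient $\|\nabla u\|_2$. For this I would use the identity \eqref{idd} in the form
\[
\int_{\R^2}|\nabla u|^2\,dx=D-\int_{\R^2}(A_1^2+A_2^2)|u|^2\,dx-2\,\mbox{Im}\int_{\R^2}(A_1\partial_1u+A_2\partial_2u)\overline u\,dx,
\]
estimating the last term by \eqref{bdd1} with a small $\eps$ so that the resulting $\eps\int|\nabla u|^2$ can be absorbed into the left side, while $\int_{\R^2}(A_1^2+A_2^2)|u|^2\,dx$ is bounded through \eqref{bdd} in terms of the already-controlled Lebesgue norms $\|u\|_{2s}$ and $\|u\|_{s'}$. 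This produces a uniform bound on $\|\nabla u\|_2$, hence on $\|u\|$, showing the sublevel sets of $E$ on $\mathcal M(c)$ are bounded.

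The main obstacle is precisely this last step: unlike the purely gradient case, the magnetic Dirichlet energy $D$ does not directly dominate $\|\nabla u\|_2$, owing to the indefinite cross term $2\,\mbox{Im}\int_{\R^2}(A_1\partial_1u+A_2\partial_2u)\overline u\,dx$ and the nonlocal term $\int_{\R^2}(A_1^2+A_2^2)|u|^2\,dx$. The argument hinges on the order of operations, namely first extracting boundedness of all the $L^t$ norms via the diamagnetic inequality, and only afterwards invoking the auxiliary estimates \eqref{bdd}--\eqref{bdd1} to close the loop.
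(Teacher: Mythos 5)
Your proposal is correct and takes essentially the same route as the paper: both rest on the identity $E(u)=\frac{p-4}{2(p-2)}\int_{\R^2}|D_1u|^2+|D_2u|^2\,dx$ on $\mathcal{M}(c)$ combined with \eqref{MGN} to get $\gamma(c)>0$, and on the chain diamagnetic inequality $\to$ Gagliardo--Nirenberg $\to$ \eqref{bdd}--\eqref{bdd1} with $\eps$-absorption to recover control of $\|\nabla u\|_2$ from the magnetic energy. Your formulation via bounded sublevel sets is just the contrapositive of the paper's sequence-based claim (that $\|\nabla u_n\|_2\to\infty$ forces $\|\nabla |u_n|\|_2\to\infty$), so there is no essential difference.
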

\begin{proof}
For any $u \in \mathcal{M}(c)$, since $Q(u)=0$, by using \eqref{MGN} and the assumption that $p>4$, we then obtain that
\begin{align} \label{D}
\int_{\R^2} |D_1 u|^2 +|D_2 u|^2 \, dx \geq C.
\end{align}
In addition, for any $u \in \mathcal{M}(c)$, there holds that
\begin{align} \label{j}
E(u)=E(u)-\frac{1}{p-2} Q(u) &=\frac{p-4}{2\left(p-2\right)}\int_{\R^2}|D_1 u|^2 +|D_2 u|^2 \, dx.
\end{align}
Consequently, $\gamma(c) >0$ for any $c>0$. Next we shall prove that $E$ restricted on $\mathcal{M}(c)$ is coercive. To do this, we claim that if $\{u_n\} \subset \mathcal{M}(c)$ and $\|\nabla u_n\|_2 \to \infty$ as $n \to \infty$, then $\|\nabla |u_n|\|_2 \to \infty$ as $n \to \infty$. Assume the contrary, by the Gagliardo-Nirenberg inequality \eqref{GN}, we then derive that $\|u_n\|_t \leq C$ for any $2 \leq t < \infty$. Since $Q(u_n)=0$, then, from \eqref{bdd} and \eqref{bdd1} with $\eps>0$ small enough, we know that $\|\nabla u_n\|_2 \leq C$, and this contradicts the assumption. Thus the claim follows. Now, applying the claim and \eqref{MGN}, we conclude from \eqref{j} that $E$ is coercive on $\mathcal{M}(c)$, and the proof is completed.
\end{proof}

\begin{lem} \label{codi}
Assume $p >4$, then $\mathcal{M}(c)$ is a $C^1$ manifold of codimension $2$ in $H^1(\R^2)$, and it is a $C^1$ manifold of codimension $1$ in $S(c)$.
\end{lem}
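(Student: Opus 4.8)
The plan is to realize $\mathcal{M}(c)$ as a regular level set of a $C^1$ map. Set $G(u):=\int_{\R^2}|u|^2\,dx$ and recall $Q$ from \eqref{defq}. Both are of class $C^1$ on $H^1(\R^2)$: $G$ is smooth, and since $E=\tfrac12\int_{\R^2}|D_1u|^2+|D_2u|^2\,dx-\tfrac{\lambda}{p}\int_{\R^2}|u|^p\,dx$ is $C^1$ (as recalled in Section \ref{preliminaries}) one has the convenient identity $Q(u)=2E(u)+\tfrac{\lambda(4-p)}{p}\int_{\R^2}|u|^p\,dx$, whence $Q\in C^1(H^1(\R^2),\R)$ as well. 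Because $\mathcal{M}(c)=\Psi^{-1}(c,0)$ for $\Psi:=(G,Q):H^1(\R^2)\to\R^2$, the submersion theorem reduces both assertions to showing that $d\Psi(u)$ is surjective at every $u\in\mathcal{M}(c)$, i.e. that $G'(u)$ and $Q'(u)$ are linearly independent in $(H^1(\R^2))^*$. Granting this, $\mathcal{M}(c)$ is a $C^1$ manifold of codimension $2$ in $H^1(\R^2)$; moreover $S(c)=G^{-1}(c)$ is itself a $C^1$ manifold since $G'(u)u=2c\neq0$, and independence says $Q'(u)$ is not a multiple of $G'(u)$, so $Q'(u)$ does not vanish on $T_uS(c)=\ker G'(u)$. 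Hence $Q|_{S(c)}$ has non-vanishing differential on $\mathcal{M}(c)$, giving codimension $1$ in $S(c)$.

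The mechanism behind the independence is visible from the scaling $u_t(x)=tu(tx)$ of Lemma \ref{escaling}. By \eqref{eut} one computes
\begin{equation*}
Q(u_t)=t^2\int_{\R^2}|D_1u|^2+|D_2u|^2\,dx-\frac{\lambda(p-2)}{p}\,t^{p-2}\int_{\R^2}|u|^p\,dx,
\end{equation*}
so that, using $Q(u)=0$ to eliminate the kinetic term,
\begin{equation*}
\left.\frac{d}{dt}Q(u_t)\right|_{t=1}=(4-p)\,\frac{\lambda(p-2)}{p}\int_{\R^2}|u|^p\,dx<0
\end{equation*}
for $p>4$, where $\int_{\R^2}|u|^p\,dx>0$ follows from $Q(u)=0$ together with the lower bound \eqref{D}. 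Since $u_t\in S(c)$ for all $t$, this is a strict first-order decrease of $Q$ along a curve lying inside $S(c)$, which heuristically already forces $Q'(u)|_{T_uS(c)}\neq0$.

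The main obstacle is to turn this heuristic into a rigorous statement. The infinitesimal generator $u+x\cdot\nabla u$ of the scaling need not lie in $H^1(\R^2)$ for a general $u\in H^1(\R^2)$ — it requires the weighted integrability $|x|\,|\nabla u|\in L^2(\R^2)$, exactly the kind of decay that may fail here because of the long-range gauge fields (the difficulty flagged for $\Sigma$ elsewhere) — so one cannot legitimately evaluate $Q'(u)$ on this vector. I would therefore argue by contradiction. Suppose $G'(u)$ and $Q'(u)$ are dependent; since $G'(u)\neq0$ this means $Q'(u)=\mu G'(u)=2\mu u$ for some $\mu\in\R$. Differentiating $Q=2E+\tfrac{\lambda(4-p)}{p}\int_{\R^2}|u|^p\,dx$ gives $Q'(u)=2E'(u)+\lambda(4-p)|u|^{p-2}u$, so the relation reads $E'(u)=\mu u-\tfrac{\lambda(4-p)}{2}|u|^{p-2}u$, i.e. $u$ is a weak solution of
\begin{equation*}
-\left(D_1D_1+D_2D_2\right)u+A_0u-\mu u=\frac{\lambda(p-2)}{2}\,|u|^{p-2}u.
\end{equation*}
This is precisely the system \eqref{system1} with $\alpha=-\mu$ and $\lambda$ replaced by $\tilde\lambda:=\tfrac{\lambda(p-2)}{2}$ (the gauge fields $A_\mu$ depend on $u$ alone and are unchanged). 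Applying the Pohozaev identity of Lemma \ref{Ph} with $\lambda$ replaced by $\tilde\lambda$ yields $\int_{\R^2}|D_1u|^2+|D_2u|^2\,dx=\tfrac{\tilde\lambda(p-2)}{p}\int_{\R^2}|u|^p\,dx$, whereas $u\in\mathcal{M}(c)$ gives the same quantity with $\lambda$ in place of $\tilde\lambda$. Since $\int_{\R^2}|u|^p\,dx>0$, this forces $\tilde\lambda=\lambda$, i.e. $p=4$, contradicting $p>4$. Thus $G'(u)$ and $Q'(u)$ are independent, $d\Psi(u)$ is surjective, and the implicit function theorem delivers both manifold assertions.
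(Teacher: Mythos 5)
Your proof is correct and takes essentially the same route as the paper's: both argue by contradiction that linear dependence of the two constraint differentials would force $u$ to solve the stationary system with $\lambda$ replaced by $\tfrac{\lambda(p-2)}{2}$, and then contradict $Q(u)=0$ via the Pohozaev identity of Lemma \ref{Ph}, which is only consistent when $p=4$. The differences are purely expository: you compute $Q'$ through the identity $Q=2E+\tfrac{\lambda(4-p)}{p}\int_{\R^2}|u|^p\,dx$ and spell out the submersion-theorem bookkeeping for both codimension claims, whereas the paper writes the dependence relation and the resulting equation directly.
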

\begin{proof}
For any $u \in \mathcal{M}(c)$, we know that $P(u):= \int_{\R^2} |u|^2 \, dx -c=0$, $Q(u) =0$, and it is obvious that $P, Q$ are of $C^1$ class. Next we shall prove that, for any $u \in \mathcal{M}(c)$,
$$
\left(P'(u), Q'(u)\right): \mathcal{M}(c) \to \R^2 \mbox{\, is surjection.}
$$
We argue by contradiction that $P'(u)$ and $Q'(u)$ are linearly dependent for some $u \in \mathcal{M}(c)$. This means that there is a constant $\mu \in \R$ such that, for any $ \psi \in H^1(\R^2)$,
\begin{align*}
-2 \int_{\R^2} \left(D_1 D_1 u +D_2D_2 u \right)\overline{\psi} + A_0 \overline{\psi} \, dx = 2\mu \int_{\R^2} u \overline{\psi} \, dx+\lambda (p-2) \int_{\R^2}|u|^{p-2} u  \overline{\psi}\, dx,
\end{align*}
that is,
$$
-\left(D_1 D_1 +D_2D_2 \right) u + A_0 u ={\mu} u + \frac{\lambda (p-2)}{2} |u|^{p-2} u.
$$
Thus $u$ satisfies the Pohozaev identity
\begin{align} \label{above}
\int_{\R^2} |D_1 u|^2 + |D_2 u|^2 \, dx =\frac{\lambda (p-2)^2}{2p} \int_{\R^2} |u|^p \, dx.
\end{align}
Recalling that $Q(u) =0$, we then reach a contradiction from \eqref{above}, because of $u \neq 0$ and $p >4$. Therefore, $\mathcal{M}(c)$ is a $C^1$ manifold of codimension $2$ in $H^1(\R^2)$, and it is a $C^1$ manifold of codimension $1$ in $S(c)$.
\end{proof}

\begin{rem}
As we have already seen, when $p=4$, then Lemma \ref{codi} fails. Thus, in this case, the Pohozaev manifold $\mathcal{M}(c)$ is unavailable, which causes more hard to establish the existence of solutions to \eqref{system1}-\eqref{mass}.

\end{rem}

\begin{defi}\label{homotopy}
Let $B$ be a closed subset of a set $Y \subset H^1(\R^2)$. We say that a class $\mathcal{G}$ of compact subsets of $Y$ is a homotopy stable family with the closed boundary $B$ provided that
\begin{enumerate}
\item [(i)] every set in $\mathcal{G}$ contains $B$;
\item [(ii)] for any $A \in \mathcal{G}$ and any function $\eta \in C([0, 1] \times Y, Y)$ satisfying $\eta(t, x)=x$ for all $(t, x) \in (\{0\} \times Y) \cup([0, 1] \times B)$, then $\eta(\{1\} \times A) \in \mathcal{G}$.
\end{enumerate}
\end{defi}

\begin{lem} \label{ps}
Assume $p>4$. Let $\mathcal{G}$ be a homotopy stable family of compact subsets of $S(c)$ with closed boundary $B$, and let
$$
e_{\mathcal{G}}:=\inf_{A \in \mathcal{G}} \max_{u\in A} F(u),
$$
where $F: S(c) \to \R$ is defined by $F(u):=E(u_{t_u})=\max_{t >0} E(u_t)$. Suppose that $B$ is contained in a connected component of $\mathcal{M}(c)$ and
$$
\max\{\sup F(B), 0\}< e_{\mathcal{G}}< +\infty.
$$
Then there exists a Palais-Smale sequence $\{u_n\} \subset \mathcal{M}(c)$ for $E$ restricted on $S(c)$ at the level $e_{\mathcal{G}}$.
\end{lem}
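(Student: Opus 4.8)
The plan is to recast the constrained minimax problem for $E$ as an unconstrained minimax problem for the auxiliary functional $F$ on $S(c)$, produce a Palais--Smale sequence for $F$ by the general minimax principle for homotopy stable families, and then transport it, through the fiber map $t\mapsto u_t$, to a Palais--Smale sequence for $E$ restricted on $S(c)$ that lives on $\mathcal{M}(c)$.

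First I would set up the fiber map and check that $F$ is smooth. By Lemma \ref{unique}, for each $u\in S(c)$ there is a unique $t_u>0$ with $u_{t_u}\in\mathcal{M}(c)$ and $F(u)=E(u_{t_u})=\max_{t>0}E(u_t)$. From \eqref{scaling} the map $t\mapsto E(u_t)$ is explicit, and by \eqref{DE} one has $\frac{d}{dt}E(u_t)=\frac{1}{t}Q(u_t)$; since $p>4$ this function has a unique, strict, nondegenerate maximum at $t_u$, so that $\frac{d}{dt}Q(u_t)\big|_{t=t_u}<0$. The implicit function theorem then gives that $u\mapsto t_u$ is of class $C^1$ on $S(c)$, whence the retraction $m(u):=u_{t_u}$ onto $\mathcal{M}(c)$ is $C^1$ and $F=E\circ m\in C^1(S(c),\R)$. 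Note that on $\mathcal{M}(c)$ one has $t_u=1$, so $m=\mathrm{id}$ and $F=E$ there; in particular $\sup F(B)=\sup E(B)$, which matches the separation hypothesis.

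Next I would record the differential identity that drives the transfer. Writing $\phi_t(x):=t\,\phi(tx)$ for the scaling of a tangent vector $\phi\in T_uS(c)$, and differentiating $F=E\circ m$ while using that the $t$-derivative of $E(u_t)$ vanishes at $t=t_u$ (so the contribution of $dt_u$ drops out), one obtains
\begin{equation*}
F'(u)[\phi]=E'(u_{t_u})\big[\phi_{t_u}\big],\qquad \phi\in T_uS(c),
\end{equation*}
where $\phi_{t_u}\in T_{u_{t_u}}S(c)$ because scaling preserves the $L^2$ inner product. In particular $u$ is a critical point of $F|_{S(c)}$ exactly when $u_{t_u}$ is a critical point of $E|_{S(c)}$, and the identity lets one compare the dual norms of $F'(u)$ and $E'(u_{t_u})$. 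The only genuinely new feature compared with the local case is that the covariant kinetic energy $\int_{\R^2}|D_1u|^2+|D_2u|^2$, including its nonlocal gauge contributions, still scales exactly like $t^2$ under $u\mapsto u_t$ (Lemma \ref{escaling}); this is what keeps the fiber computation clean in spite of the nonlocal terms.

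With these tools the minimax step is routine: $\mathcal{G}$ is a homotopy stable family of compact subsets of $S(c)$ with closed boundary $B$, $F\in C^1(S(c))$, and $\max\{\sup F(B),0\}<e_{\mathcal{G}}<\infty$; Ghoussoub's minimax principle for homotopy stable families then yields a Palais--Smale sequence $\{v_n\}\subset S(c)$ for $F$ at the level $e_{\mathcal{G}}$. Setting $u_n:=m(v_n)=(v_n)_{t_{v_n}}$ gives $u_n\in\mathcal{M}(c)$ and $E(u_n)=F(v_n)\to e_{\mathcal{G}}$, and it remains to promote this to the Palais--Smale property for $E|_{S(c)}$ through the identity above. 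I expect this last transfer to be the main obstacle: one must control the scaling parameter $t_{v_n}$, showing it stays bounded and bounded away from $0$, so that $\phi\mapsto\phi_{t_{v_n}}$ is a uniform isomorphism of tangent spaces and $\|E'(u_n)|_{T_{u_n}S(c)}\|\to 0$ follows from $\|F'(v_n)|_{T_{v_n}S(c)}\|\to 0$. The $H^1$-boundedness of $\{u_n\}$ is furnished by the coercivity of $E$ on $\mathcal{M}(c)$ (Lemma \ref{coercive}) together with \eqref{bdd}--\eqref{bdd1} and \eqref{MGN}, and this, with the bound $E(u_n)\to e_{\mathcal{G}}>0$, is what pins down $t_{v_n}$. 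An equivalent and slightly cleaner route is to lift the construction to $\R\times S(c)$ via $\tilde{E}(s,u):=E(u_{e^s})$, where the scaling becomes an explicit coordinate whose derivative is exactly $Q(u_{e^s})$, so that the Palais--Smale condition in the $s$-variable yields $Q(u_n)\to 0$ directly and the remaining tangential derivative yields the constrained Palais--Smale property.
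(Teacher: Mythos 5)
Your overall strategy---project a Palais--Smale sequence of the fiber-invariant functional $F$ onto $\mathcal{M}(c)$ through $u\mapsto u_{t_u}$ and transfer the derivative bound via the identity $F'(u)\psi=E'(u_{t_u})\psi_{t_u}$---is the same as the paper's, and you correctly isolate the crux: one needs $t_{v_n}$ bounded away from $0$ so that $\psi\mapsto\psi_{t_{v_n}^{-1}}$ is uniformly bounded between tangent spaces. But your proposed resolution of that crux does not close. Since $u_n=(v_n)_{t_{v_n}}$, one has $t_{v_n}^2=\|\nabla u_n\|_2^2/\|\nabla v_n\|_2^2$; the coercivity of $E$ on $\mathcal{M}(c)$ together with $Q(u_n)=0$ and \eqref{D} controls the numerator from above and below, but says nothing whatsoever about the denominator. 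A Palais--Smale sequence $\{v_n\}$ produced by the plain minimax principle need not be bounded in $H^1$: $F$ is constant along the fibers $s\mapsto v_s$, so, for instance, the entire fiber $\{w_s: s>0\}$ through any constrained critical point $w$ of $F$ consists of critical points of $F$ restricted on $S(c)$ (indeed $F'(w_s)\phi=E'(w_{t_w})\phi_{t_w/s}=0$), and a sequence running to infinity along such a fiber is an admissible Palais--Smale sequence with $\|\nabla v_n\|_2\to\infty$, hence $t_{v_n}\to 0$. In that regime the transfer inequality $\|E'(u_n)|_{T_{u_n}S(c)}\|\leq \max\left(1, t_{v_n}^{-1}\right)\|F'(v_n)|_{T_{v_n}S(c)}\|$ gives nothing. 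So the level $e_{\mathcal{G}}>0$ and the $H^1$-boundedness of the projected sequence $\{u_n\}$ alone cannot ``pin down'' $t_{v_n}$, and this is a genuine gap, not a technicality.

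The missing idea---which is precisely where the paper uses the homotopy stability of $\mathcal{G}$, a hypothesis your outline never exploits except to quote a theorem---is to deform the minimizing sets into the Pohozaev manifold \emph{before} extracting the Palais--Smale sequence. Given $A_n\in\mathcal{G}$ with $\max_{A_n}F=e_{\mathcal{G}}+o_n(1)$, the map $\eta(s,u):=u_{1-s+st_u}$ is an admissible homotopy (it fixes $B$, since $t_u=1$ on $B\subset\mathcal{M}(c)$ by Lemma \ref{unique}), so $D_n:=\eta(\{1\}\times A_n)=\{u_{t_u}:u\in A_n\}\in\mathcal{G}$, $D_n\subset\mathcal{M}(c)$, and $\max_{D_n}F=\max_{A_n}F$. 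One then invokes the refined form of the minimax principle, \cite[Theorem 3.2]{Gh}, which yields a Palais--Smale sequence $\{\tilde{u}_n\}$ for $F$ at level $e_{\mathcal{G}}$ with the additional conclusion $\mbox{dist}_{H^1}(\tilde{u}_n, D_n)=o_n(1)$. Because the sets $D_n$ lie in $\mathcal{M}(c)$ with uniformly bounded energy, Lemma \ref{coercive} bounds them in $H^1$ and \eqref{D} bounds their gradients from below; the proximity then transfers both bounds to $\tilde{u}_n$, and only at that point is $\tilde{t}_n=t_{\tilde{u}_n}$ trapped in a compact subinterval of $(0,\infty)$, after which your transfer argument goes through verbatim. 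Your closing remark about the augmented functional $\tilde{E}(s,u)=E(u_{e^s})$ is a genuinely different route (in the spirit of Jeanjean), but as stated it is only a remark and faces the same unaddressed boundedness issue.
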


\begin{proof}
We shall follow the ideas from \cite{BaSo} to prove this lemma. By the definition of $e_{\mathcal{G}}$, there exists a minimizing sequence $\{A_n\} \subset \mathcal{G}$ such that
$$
\max_{u \in A_n} F(u) = e_{\mathcal{G}} + o_n(1).
$$
Let a map $\eta: [0, 1] \times S(c) \to S(c)$ be defined by $\eta(t, u):=u_{1-t + t t_u}$. In view of the definition of $t_u$, see \eqref{tu}, we have that $\eta \in C([0, 1] \times S(c), S(c))$. Since $B \subset \mathcal{M}(c)$, it then follows from Lemma \ref{unique} that $t_u=1$ for any $u \in B$, hence $\eta(t, u)=u$ for any $(t, u) \in \left(\{0\} \times S(c)\right) \cup \left([0, 1] \times B\right)$. Recall that $\mathcal{G}$ is a homotopy stable family of compact subsets of $S(c)$ with closed boundary $B$, by Definition \ref{homotopy}, then $D_n:=\eta(\{1\} \times A_n)=\{u_{t_u}: u \in A_n\} \in \mathcal{G}$. It is immediate to see that $D_n \subset \mathcal{M}(c)$, and for any $v \in D_n$, there is $u \in A_n$ such that $v=u_{t_u}$. Thus $F(v)=F(u_{t_u})=E(u_{t_u})=F(u)$, this means that
$$
\max_{v \in D_n}F(v)=\max_{u \in A_n} F(u).
$$
As a consequence, there is another minimizing sequence $\{D_n\} \subset \mathcal{M}(c)$ such that
$$
\max_{v \in D_n} F(v) =e_{\mathcal{G}} +o_n(1).
$$
Using the equivalent minimax principle \cite[Theorem 3.2]{Gh}, we are able to obtain a Palais-Smale sequence $\{\tilde{u}_n\} \subset S(c)$ for $F$ restricted on $S(c)$ at the level $e_{\mathcal{G}}$ such that $\mbox{dist}_{H^1}(\tilde{u}_n, D_n) =o_n(1)$.

We now set $u_n:=({\tilde{u}_n})_{\tilde{t}_n} \in \mathcal{M}(c)$, where $\tilde{t}_n:=t_{\tilde{u}_n}$. At this point, to end the proof, it suffices to deduce that $\{u_n\}$ is a Palais-Smale sequence for $E$ restricted on $S(c)$ at the level $e_{\mathcal{G}}$. To this end, we first claim that there exist two constants $C_1, C_2>0$ such that $C_1 \leq \tilde{t}_n \leq C_2$. Indeed, in view of the definition of the functional $F$, we have that $E({u}_n)=F(\tilde{u}_n)=e_{\mathcal{G}} +o_n(1)$. Noticing that $e_{\mathcal{G}}< \infty$ and $E$ restricted on $\mathcal{M}(c)$ is coercive by Lemma \ref{coercive}, we then know that $\{u_n\}$ is bounded in $H^1(\R^2)$. In particular, $\{\|\nabla u_n\|_2\}$ is bounded from above. Moreover, thanks to $Q(u_n) =0$, we know that $\{\|\nabla u_n\|_2\}$ is bounded from below by a positive constant. Otherwise, by using Lemma \ref{Aineq}, then
$$
\int_{\R^2} |D_1 u_n|^2 +|D_2 u_n|^2 \, dx=o_n(1),
$$
and this contradicts \eqref{D}. Hence $\{\|\nabla u_n\|_2\}$ is bounded from upper and below by a positive constant. On the other hand, recalling that $\{D_n\} \subset \mathcal{M}(c)$ is a minimizing sequence for $F$ at the level $e_{\mathcal{G}}$, using again Lemma \ref{coercive} and the fact that $\mbox{dist}_{H^1}(\tilde{u}_n, D_n) =o_n(1)$, we can derive that $\{\|\nabla \tilde{u}_n\|_2\}$ is bounded from upper and below by a positive constant. Notice that
\begin{align*}
\tilde{t}_n^2= \frac{\int_{\R^2} |\nabla {u}_n|^2 \, dx }{\int_{\R^2} |\nabla \tilde{u}_n|^2 \, dx},
\end{align*}
the claim then follows.

Since
$$
\int_{\R^2} u\psi \, dx =\int_{\R^2}u_{t_u} \psi_{t_u} \, dx,
$$
then $\psi_{t_u} \in T_{u_{t_u}} S(c)$ if and only if $ \psi \in T_uS(c)$. Thus it is not difficult to find that a map $T_uS(c) \to T_{u_{t_u}} S(c)$ defined by $\psi \mapsto \psi_{t_u}$ is an isomorphism with the inverse $\psi \mapsto \psi_{t^{-1}_u}$. Furthermore, by using the definition that $F(u)=\max_{t >0} E(u_t)=E(u_{t_u})$, one can check that $F'(u) \psi=E'(u_{t_u})\psi_{t_u}$ for any $u \in S(c)$ and $\psi \in T_u S(c)$.
Hence
\begin{align*}
\|E_{\mid_{S(c)}}'(u_n)\| &= \sup_{\psi \in T_{u_n}S(c), \|\psi\| \leq 1} |E'(u_n) \psi| = \sup_{\psi \in T_{u_n}S(c), \|\psi\| \leq 1} |E'(u_n) (\psi_{{t^{-1}_n}})_{t_n}| \\
&=\sup_{\psi \in T_{u_n} S(c), \|\psi\| \leq 1} |F'(\tilde{u}_n)(\psi_{{t^{-1}_n}})|.
\end{align*}
Observe that $\psi_{{t^{-1}_n}} \in T_{\tilde{u}_n}S(c)$ if and only if $\psi \in T_{u_n} S(c)$. In addition, it follows from the claim that $\|\psi_{t^{-1}_n}\| \leq C \|\psi\|$. Thus
\begin{align} \label{pssequence}
\|E_{\mid_{S(c)}}'(u_n)\| \leq \sup_{\psi_{{t^{-1}_n}} \in T_{\tilde{u}_n} S(c), \|\psi_{{t^{-1}_n}}\| \leq C} |F'(\tilde{u}_n)\psi_{{t^{-1}_n}}|
= C\sup_{\phi \in T_{\tilde{u}_n} S(c), \|\phi \| \leq 1} |F'(\tilde{u}_n)\phi|.
\end{align}
Recall that $\{\tilde{u}_n\} \subset S(c)$ is a Palais-Smale sequence for $F$ restricted on $S(c)$ at the level $e_{\mathcal{G}}$, then, from \eqref{pssequence}, we obtain that $\{u_n\} \subset \mathcal{M}(c)$ is a Palais-Smale sequence for $E$ restricted on $S(c)$ at the level $e_{\mathcal{G}}$, and the proof is completed.
\end{proof}

\begin{lem}
Assume $p >4$, then there exists a Palais-Smale sequence $\{u_n\} \subset \mathcal{M}(c)$ for $E$ restricted on $S(c)$ at the level $\gamma(c)$.
\end{lem}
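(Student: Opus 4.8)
The plan is to deduce this lemma directly from Lemma \ref{ps} by selecting the simplest possible homotopy stable family, so that the minimax level collapses onto the Pohozaev value $\gamma(c)$. Concretely, I would take the closed boundary to be $B=\emptyset$ and let $\mathcal{G}:=\{\{u\}: u\in S(c)\}$ be the collection of all singletons of $S(c)$. Each singleton is trivially a compact subset of $S(c)$, and condition (i) of Definition \ref{homotopy} holds vacuously because $B=\emptyset$. For the homotopy stability condition (ii), given any $\{u\}\in\mathcal{G}$ and any $\eta\in C([0,1]\times S(c),S(c))$ with $\eta(t,x)=x$ on $(\{0\}\times S(c))\cup([0,1]\times\emptyset)$, the image $\eta(\{1\}\times\{u\})=\{\eta(1,u)\}$ is again a singleton and hence lies in $\mathcal{G}$. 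Thus $\mathcal{G}$ is a homotopy stable family of compact subsets of $S(c)$ with closed boundary $B=\emptyset$.

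The key step is to identify the minimax value $e_{\mathcal{G}}$ with $\gamma(c)$. By definition, $e_{\mathcal{G}}=\inf_{A\in\mathcal{G}}\max_{u\in A}F(u)=\inf_{u\in S(c)}F(u)=\inf_{u\in S(c)}E(u_{t_u})$. By Lemma \ref{unique}, for each $u\in S(c)$ the rescaled function $u_{t_u}$ belongs to $\mathcal{M}(c)$, so $E(u_{t_u})\geq\gamma(c)$ and therefore $e_{\mathcal{G}}\geq\gamma(c)$. Conversely, every $v\in\mathcal{M}(c)$ satisfies $Q(v)=0$, whence Lemma \ref{unique} gives $t_v=1$ and $F(v)=E(v_{t_v})=E(v)$; taking the infimum over $\mathcal{M}(c)$ yields $e_{\mathcal{G}}\leq\inf_{v\in\mathcal{M}(c)}E(v)=\gamma(c)$. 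Combining the two inequalities gives $e_{\mathcal{G}}=\gamma(c)$.

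It then remains only to verify the quantitative hypotheses of Lemma \ref{ps}. Since $B=\emptyset$, we have $\sup F(B)=-\infty$ under the usual convention, so $\max\{\sup F(B),0\}=0$, and the empty set is vacuously contained in a connected component of $\mathcal{M}(c)$. Because $\mathcal{M}(c)\neq\emptyset$ (indeed $u_{t_u}\in\mathcal{M}(c)$ for any fixed $u\in S(c)$ by Lemma \ref{unique}), we get $\gamma(c)=e_{\mathcal{G}}<+\infty$; and Lemma \ref{coercive} gives $\gamma(c)>0$. Hence $0<e_{\mathcal{G}}<+\infty$, all hypotheses of Lemma \ref{ps} hold, and that lemma furnishes a Palais-Smale sequence $\{u_n\}\subset\mathcal{M}(c)$ for $E$ restricted on $S(c)$ at the level $\gamma(c)$, which is exactly the assertion.

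I expect no genuine obstacle here: all the analytic difficulty has already been absorbed into Lemma \ref{ps} (which, via the scaling $t\mapsto u_t$ and the equivalent minimax principle, converts an arbitrary minimizing family into one living on the Pohozaev manifold) and into Lemma \ref{coercive} (positivity and coercivity of $\gamma$). The only point that requires mild care is the identity $e_{\mathcal{G}}=\gamma(c)$, which rests entirely on the characterization $t_v=1\iff Q(v)=0$ from Lemma \ref{unique}; the choice $B=\emptyset$ is what makes the boundary hypotheses trivial and lets the strict inequality $0<e_{\mathcal{G}}$ reduce to $\gamma(c)>0$.
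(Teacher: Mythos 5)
Your proposal is correct and follows essentially the same route as the paper: take $B=\emptyset$, let $\mathcal{G}$ be the family of all singletons in $S(c)$, identify $e_{\mathcal{G}}=\gamma(c)$ via the two inequalities coming from Lemma \ref{unique}, and then invoke Lemma \ref{ps}. Your extra care in checking the quantitative hypotheses ($0<e_{\mathcal{G}}<\infty$ via Lemma \ref{coercive} and the nonemptiness of $\mathcal{M}(c)$) is a detail the paper leaves implicit, but it is the same argument.
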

\begin{proof}
Let $\mathcal{G}$ be all singletons in $S(c)$, $B=\emptyset$, and it is easy to check that $\mathcal{G}$ is a homotopy stable family of compact subsets of $S(c)$ without boundary. Moreover,
$$
e_{\mathcal{G}}=\inf_{A \in \mathcal{G}} \max_{u \in A} F(u)=\inf_{u \in S(c)} \max_{t >0} E(u_t).
$$
We next prove that $e_{\mathcal{G}} =\gamma(c)$. By Lemma \ref{unique}, on one hand, we know that, for any $u \in S(c)$, there exists a unique $t_u >0$ such that $u_{t_u} \in \mathcal{M}$ and $\max_{t >0}E(u_t)=E(u_{t_u})$, then
$$
\inf_{u \in S(c)} \max_{t>0} E(u_t) \geq \inf_{u \in \mathcal{M}(c)} E(u).
$$
On the other hand, for any $u \in \mathcal{M}(c)$, we have that
$$
\inf_{u \in \mathcal{M}(c)} E(u)\geq \inf_{u \in S(c)} \max_{t >0} E(u_t).
$$
Thus $e_{\mathcal{G}} =\gamma(c)$. It then follows from Lemma \ref{ps} that the lemma necessarily holds.
\end{proof}

\begin{lem} \label{pre}
Assume $p>4$. Let $\{u_n\} \subset \mathcal{M}(c)$ be a Palais-Smale sequence for $E$ restricted on $S(c)$ at the level $\gamma(c)$, then there exist a nontrivial $u_c \in  H^1(\R^2),$ a sequence $\{\alpha_n\} \subset \R$ and a constant $\alpha_c \in \R$ such that, up to translations,
\begin{enumerate}
\item [(i)] $u_n \rightharpoonup u_c$ in $H^1(\R^2)$ as $n \to \infty$;
\item [(ii)] $\alpha_n \to \alpha_c$ in $\R$ as $n \to \infty$;
\item [(iii)] $-\left(D_1D_1 + D_2D_2\right) u_n+ A_0(u_n) u_n + \alpha_n u_n- \lambda|u_n|^{p-2}u_n=o_n(1)$;
\item [(iv)] $-\left(D_1D_1 + D_2D_2\right) u_c+ A_0(u_c) u_c + \alpha_c u_c- \lambda|u_c|^{p-2}u_c=0$.
\end{enumerate}
In addition, if $\|u_n -u_c\|_p =o_n(1)$ and $\alpha_c >0$, then $\|u_n- u_c\|=o_n(1)$.
\end{lem}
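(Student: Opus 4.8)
The plan is to combine the coercivity of $E$ on $\mathcal{M}(c)$ (Lemma \ref{coercive}) with a non-vanishing/translation argument to produce a nontrivial weak limit, to extract bounded Lagrange multipliers, and then to pass to the weak limit in the equation by invoking the weak-continuity properties of the nonlocal gauge terms established in Lemmas \ref{BL}--\ref{BL1}. First I would note that since $E(u_n) \to \gamma(c) < \infty$ and $E$ is coercive on $\mathcal{M}(c)$, the sequence $\{u_n\}$ is bounded in $H^1(\R^2)$. Because $\{u_n\}$ is a Palais-Smale sequence for $E$ restricted on $S(c)$, the Lagrange multiplier rule produces real numbers $\alpha_n = -\tfrac{1}{c}E'(u_n)[u_n] + o_n(1)$ with $\|E'(u_n) + \alpha_n u_n\|_{H^{-1}} = o_n(1)$; in view of the explicit formula for $E'$ recalled in Section \ref{preliminaries}, this is precisely assertion $(iii)$. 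Using that same formula together with Lemma \ref{Aineq} and \eqref{bdd}--\eqref{bdd1}, the quantity $E'(u_n)[u_n]$ is bounded, so $\{\alpha_n\}$ is bounded and, up to a subsequence, $\alpha_n \to \alpha_c$, which gives $(ii)$.

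Next I would establish nontriviality up to translation. Since $u_n \in \mathcal{M}(c)$ we have $Q(u_n)=0$, so by \eqref{D} in the proof of Lemma \ref{coercive}, $\int_{\R^2}|u_n|^p\,dx = \tfrac{p}{\lambda(p-2)}\int_{\R^2}|D_1 u_n|^2+|D_2 u_n|^2\,dx \geq C > 0$, i.e. $\|u_n\|_p$ is bounded away from zero. Were vanishing to occur, the Lions lemma \cite[Lemma I.1]{Li2} would force $\|u_n\|_p \to 0$, a contradiction; hence there exist $y_n \in \R^2$ and $\delta > 0$ with $\int_{B_1(y_n)}|u_n|^2\,dx \geq \delta$. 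Replacing $u_n$ by $u_n(\cdot + y_n)$, which is legitimate since $E$, $Q$ and the equation are translation invariant, the compact embedding $H^1(B_1)\hookrightarrow L^2(B_1)$ yields a weak limit $u_c \neq 0$, giving $(i)$. Passing to the weak limit in $(iii)$ against a fixed test function then yields $(iv)$: the gradient term converges by weak convergence in $H^1(\R^2)$, the quadratic gauge terms $A_j^2(u_n)u_n$ and $A_0(u_n)u_n$ converge by Lemma \ref{BL}$(i)$--$(ii)$, the first-order terms $A_j(u_n)\partial_j u_n$ converge by Lemma \ref{BL1}$(i)$, and $|u_n|^{p-2}u_n \rightharpoonup |u_c|^{p-2}u_c$ in $L^{p'}(\R^2)$ (boundedness in $L^{p'}$ plus a.e.\ convergence).

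For the final implication, assume $\|u_n - u_c\|_p \to 0$ and $\alpha_c > 0$. Testing $(iii)$ and $(iv)$ against $u_n - u_c$ and subtracting, and using $\langle u_c, u_n - u_c\rangle \to 0$ together with the boundedness of $\{\alpha_n\}$, I obtain
\[
\langle E'(u_n) - E'(u_c),\, u_n - u_c\rangle + \alpha_n\|u_n - u_c\|_2^2 = o_n(1).
\]
In the first bracket the gradient part contributes exactly $\|\nabla(u_n - u_c)\|_2^2$, while the nonlinearity is controlled by $\big(\||u_n|^{p-2}u_n\|_{p'} + \||u_c|^{p-2}u_c\|_{p'}\big)\|u_n - u_c\|_p \to 0$. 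The crux is to show that all gauge contributions vanish. Using the Brezis--Lieb-type splittings of Lemma \ref{BL}$(iii)$ and Lemma \ref{BL1}$(ii)$ together with Lemma \ref{BL}$(i)$--$(ii)$ and Lemma \ref{BL1}$(i)$, these contributions reduce, up to $o_n(1)$, to $\int_{\R^2} A_j^2(u_n - u_c)|u_n - u_c|^2\,dx$ and $\mbox{Im}\int_{\R^2} A_j(u_n - u_c)\partial_j(u_n - u_c)(\bar{u}_n - \bar{u}_c)\,dx$; by Lemma \ref{Aineq} and the H\"older inequality these are bounded by products of $L^t(\R^2)$-norms of $u_n - u_c$ with $t > 2$ and $\|\nabla(u_n - u_c)\|_2$.

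Since $\|u_n - u_c\|_p \to 0$ and $\{u_n - u_c\}$ is bounded in every $L^t(\R^2)$ for $t \geq 2$, interpolation yields $\|u_n - u_c\|_t \to 0$ for all $2 < t < \infty$, so all these gauge terms tend to zero. Consequently $\|\nabla(u_n - u_c)\|_2^2 + \alpha_n\|u_n - u_c\|_2^2 = o_n(1)$, and because $\alpha_n \to \alpha_c > 0$ both summands are eventually nonnegative and hence tend to zero, which gives $\|u_n - u_c\| = o_n(1)$. I expect the main obstacle to be exactly the last step, namely certifying that the long-range nonlocal gauge terms do not destroy this coercivity estimate; this is where the weak-continuity statements and the Brezis--Lieb splittings of Lemmas \ref{BL}--\ref{BL1}, combined with the $L^t$-interpolation driven by the hypothesis $\|u_n - u_c\|_p \to 0$, are indispensable.
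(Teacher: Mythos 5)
Your proposal is correct and follows essentially the same route as the paper's proof: coercivity of $E$ on $\mathcal{M}(c)$ (Lemma \ref{coercive}) for boundedness, the Lions lemma combined with $Q(u_n)=0$ to produce a nontrivial weak limit up to translations, the standard Lagrange-multiplier extraction for assertions $(ii)$--$(iii)$, Lemmas \ref{BL}--\ref{BL1} to pass to the limit in the equation for $(iv)$, and the vanishing of the nonlocal gauge pairings (via Lemma \ref{Aineq}, the H\"older inequality and $L^t$-interpolation from $\|u_n-u_c\|_p\to 0$) to obtain the final strong convergence. The only cosmetic difference is that you test $(iii)$ and $(iv)$ against $u_n-u_c$ and subtract, whereas the paper tests $(iii)$ against $u_n$ and $(iv)$ against $u_c$ and compares the resulting identities; the cancellation exploited is the same in both formulations.
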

\begin{proof}
By Lemma \ref{coercive}, we know that $\gamma(c)>0$ and $\{u_n\}$ is bounded in $H^1(\R^2)$, it then follows from the Lions concentration compactness Lemma \cite[Lemma I.1]{Li2} that there exists a nontrivial $u_c \in H^1(\R^2)$ such that $u_n\rightharpoonup u_c$ in $H^1(\R^2)$ as $n \to \infty$, up to translations. Otherwise, one has that $\|u_n\|_p=o_n(1)$, this then leads to $\gamma(c)=0$ by the fact that $Q(u_n)=0$, which is impossible. Thus the assertion $(i)$ follows. Recalling that $\{u_n\}$ is bounded in $H^1(\R^2)$ and $\|E_{|_{S(c)}}'(u_n)\|=o_n(1)$, we then obtain that $\|E'(u_n)-\left(E'(u_n) u_n\right) u_n\|=o_n(1)$, this means that, for any $\psi \in H^1(\R^2)$,
\begin{align*}
-\int_{\R^2} \left(D_1D_1 u_n +D_2D_2 u_n\right)  \overline{\psi} \, dx & + \int_{\R^2} A_0(u_n)u_n \overline{\psi} \,dx + \alpha_n \int_{\R^2}u_n \overline{\psi} \, dx \\
&- \lambda \int_{\R^2} |u_n|^{p-2} u_n \overline{\psi}=o_n(1),
\end{align*}
where
\begin{align} \label{an}
\begin{split}
\alpha_n:= -\frac {1}{c} \left(\int_{\R^2} \left( |D_1 u_n|^2 + |D_2 u_n|^2 \right) + A_0(u_n)|u_n|^2 \, dx -\lambda \int_{\R^2} |u_n|^p \, dx \right).
\end{split}
\end{align}
Thus the assertion $(iii)$ follows. From \eqref{an} and the boundedness of $\{u_n\}$ in $H^1(\R^2)$, we obtain that the sequence $\{\alpha_n\} \subset \R$ is bounded in $\R$, hence the assertion $(ii)$ follows. Since $u_n \rightharpoonup u_c$ in $H^1(\R^2)$ as $n \to \infty$, by using Lemmas \ref{BL}-\ref{BL1}, we then know from the assertion $(iii)$ that the assertion $(iv)$ follows.

If $\|u_n- u\|_p=o_n(1)$, it then yields from the assertions $(ii)$-$(iv)$ that
\begin{align} \label{com}
&\int_{\R^2} \left( |D_1 u_n|^2 + |D_2 u_n|^2 \right) + A_0(u_n)|u_n|^2 \, dx + \alpha_n \int_{\R^2} |u_n|^2 \, dx \\
&= \int_{\R^2} \left( |D_1 u_c|^2 + |D_2 u_c|^2 \right) + A_0(u_c)|u_c|^2 \, dx + \alpha_c \int_{\R^2}|u_c|^2 \, dx + o_n(1).
\end{align}
Since $\alpha_c >0$ and $u_n \rightharpoonup u_c$ in $H^1(\R^2)$ as $n \to \infty$, then \eqref{com} indicates that $\|u_n -u_c\|=o_n(1)$. Thus the proof is completed.
\end{proof}

\begin{lem} \label{lpconv}
Assume $p>4$. Let $\{u_n\} \subset \mathcal{M}(c)$ be a Palais-Smale sequence for $E$ restricted on $S(c)$ at the level $\gamma(c)$ such that $u_n\rightharpoonup u_c \neq 0$ in $H^1(\R^2)$ as $n \to \infty$. If
\begin{align} \label{rc}
\gamma(c) \leq \gamma(\bar{c}) \, \, \, \mbox{for any} \, \, \, \bar{c} \in (0, c],
\end{align}
then $\|u_n -u_c\|_p=o_n(1)$ and $E(u_c)=\gamma(c)$.
\end{lem}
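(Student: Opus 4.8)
The plan is to combine a Brezis--Lieb type decomposition with the Pohozaev identity and the monotonicity hypothesis \eqref{rc}. Write $v_n := u_n - u_c$, so that $v_n \rightharpoonup 0$ in $H^1(\R^2)$. First I would record that $u_c$ is a nontrivial solution of \eqref{system1} with frequency $\alpha_c$: this is precisely assertion $(iv)$ of Lemma \ref{pre}. Hence Lemma \ref{Ph} applies and yields $Q(u_c)=0$. Setting $\bar c := \|u_c\|_2^2$, weak lower semicontinuity of the $L^2$-norm together with $u_c \neq 0$ gives $\bar c \in (0,c]$, and since $Q(u_c)=0$ we have $u_c \in \mathcal{M}(\bar c)$, whence $E(u_c) \geq \gamma(\bar c)$. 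The monotonicity assumption \eqref{rc} then forces
\[
E(u_c) \geq \gamma(\bar c) \geq \gamma(c).
\]
This inequality is the decisive consequence of \eqref{rc}; it is what will pin down $E(v_n)$ from above.

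Next I would establish the splitting of the energy and of the constraint functional. Using the representation of $E$ and $Q$ in terms of the four quantities $\int_{\R^2}|\nabla u|^2\,dx$, $\int_{\R^2}(A_1^2+A_2^2)|u|^2\,dx$, $\mbox{Im}\int_{\R^2}(A_1\partial_1 u + A_2\partial_2 u)\overline{u}\,dx$ and $\int_{\R^2}|u|^p\,dx$, each of these splits along $u_n = v_n + u_c$: the gradient term by the weak convergence $v_n \rightharpoonup 0$ in $H^1(\R^2)$, the $L^p$ term by the classical Brezis--Lieb lemma, and the two nonlocal terms precisely by Lemma \ref{BL}$(iii)$ and Lemma \ref{BL1}$(ii)$. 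Assembling these gives
\[
E(u_n) = E(v_n) + E(u_c) + o_n(1), \qquad Q(u_n) = Q(v_n) + Q(u_c) + o_n(1).
\]
Since $\{u_n\}\subset\mathcal{M}(c)$ forces $Q(u_n)=0$ and we already have $Q(u_c)=0$, the second identity yields $Q(v_n)=o_n(1)$.

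I would then exploit $p>4$ through the identity \eqref{j}. From $Q(v_n)=o_n(1)$ one gets
\[
E(v_n) = \Big(E(v_n)-\tfrac{1}{p-2}Q(v_n)\Big)+o_n(1) = \tfrac{p-4}{2(p-2)}\int_{\R^2}|D_1 v_n|^2 + |D_2 v_n|^2 \, dx + o_n(1) \geq o_n(1),
\]
so $\liminf_{n} E(v_n)\geq 0$. On the other hand, the energy splitting together with $E(u_n)=\gamma(c)+o_n(1)$ and the lower bound $E(u_c)\geq\gamma(c)$ gives $E(v_n)=\gamma(c)-E(u_c)+o_n(1)\leq o_n(1)$. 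Hence $E(v_n)\to 0$. Feeding this back into the displayed identity shows $\int_{\R^2}|D_1 v_n|^2+|D_2 v_n|^2\,dx\to 0$, and then $Q(v_n)=o_n(1)$ forces $\|v_n\|_p^p\to 0$, i.e.\ $\|u_n-u_c\|_p=o_n(1)$. Finally $E(u_c)=\gamma(c)-E(v_n)+o_n(1)=\gamma(c)$.

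I expect the only genuinely technical point to be the \emph{exact} Brezis--Lieb splitting of the two nonlocal terms, but this is already furnished by Lemmas \ref{BL} and \ref{BL1}, so once those are invoked the argument is bookkeeping. The conceptual engine, and the real content of the proof, is the interplay between the Pohozaev identity (giving $Q(u_c)=0$), the monotonicity \eqref{rc} (giving $E(u_c)\geq\gamma(c)$, hence the upper bound on $E(v_n)$), and the sign of $E(v_n)$ dictated by $p>4$; it is this sandwich that upgrades weak convergence to strong $L^p$ convergence.
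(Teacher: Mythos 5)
Your proof is correct and follows essentially the same route as the paper's: the Brezis--Lieb-type splitting of $E$ via Lemmas \ref{BL}--\ref{BL1}, the Pohozaev identity $Q(u_c)=0$ from Lemmas \ref{pre} and \ref{Ph}, and the monotonicity \eqref{rc} giving $E(u_c)\geq\gamma(\bar{c})\geq\gamma(c)$, sandwiched against the nonnegativity of $E(u_n-u_c)$ forced by $p>4$. The only cosmetic difference is that the paper works with the combination $E-\tfrac12 Q=\tfrac{\lambda(p-4)}{2p}\|\cdot\|_p^p$ to pass directly from $E(u_n-u_c)\to 0$ to $\|u_n-u_c\|_p\to 0$, whereas you use $E-\tfrac{1}{p-2}Q$ and then return to $Q(u_n-u_c)=o_n(1)$; your explicit derivation of $Q(u_n-u_c)=o_n(1)$ from the splitting of $Q$ in fact makes precise a step the paper uses only implicitly.
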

\begin{proof}
In view of Lemma \ref{pre}, we know that there exists a constant $\alpha_c \in \R$ such that $u_c$ satisfies the equation
$$
- \left(D_1D_1+D_2D_2\right)u_c + A_0(u) u_c + \alpha_c u_c=\lambda |u_c|^{p-2} u_c,
$$
and $Q(u_c)=0$ by Lemma \ref{Ph}. Since $u_n \rightharpoonup u_c$ in $H^1(\R^2)$ as $n \to \infty$, then
\begin{align*}
\int_{\R^2}|\nabla u_n-\nabla u_c|^2 \, dx + \int_{\R^2}|\nabla u_c|^2 \, dx &= \int_{\R^2}|\nabla u_n|^2 \, dx +o_n(1), \\
\int_{\R^2} |u_n- u_c|^p \, dx + \int_{\R^2} |u_c|^p \, dx &= \int_{\R^2}|u_n|^p \, dx +o_n(1).
\end{align*}
It then follows from Lemmas \ref{BL}-\ref{BL1} that
\begin{align} \label{ju}
E(u_n-u_c) +E(u_c)=E(u_n) +o_n(1)=\gamma (c)+ o_n(1).
\end{align}
We now assume that $\bar{c}:=\|u_c\|_2^2 \leq c$. Since $Q(u_c)=0$, then $u_c \in \mathcal{M}(\bar{c})$, this infers that $\gamma(\bar{c}) \leq E(u_c)$. By using the assumption \eqref{rc}, it then results from \eqref{ju} that $E(u_c-u_c) \leq o_n(1)$. On the other hand, observe that
\begin{align} \label{jq}
\begin{split}
E(u_n-u_c)&=E(u_n-u_c)- \frac 12 Q(u_n-u_c) \\
&=\frac{\lambda(p-4)}{2p}\int_{\R^2} |u_n-u_c|^p \, dx \geq 0.
\end{split}
\end{align}
Consequently, $E(u_n-u_c)=o_n(1)$. Thus, from \eqref{jq}, we have that $\|u_n-u_c\|_p=o_n(1)$. Finally, by applying \eqref{ju}, we get that $E(u_c)=\gamma(c)$. Therefore, we finish the proof.
\end{proof}

\begin{lem} \label{nonincreasing}
Assume $p>4$, then the function $c \mapsto \gamma(c)$ is nonincreasing on $(0, +\infty)$.
\end{lem}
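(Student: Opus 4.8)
The plan is to exploit the minimax characterisation $\gamma(c)=\inf_{u\in S(c)}\max_{t>0}E(u_t)$ established above, together with the observation that, by Lemma \ref{unique} and \eqref{scaling}, the quantity $\max_{t>0}E(u_t)$ depends on $u$ only through the two numbers $T(u):=\int_{\R^2}|D_1u|^2+|D_2u|^2\,dx$ and $N(u):=\int_{\R^2}|u|^p\,dx$; explicitly, maximising $\frac{t^2}{2}T(u)-\frac{\lambda t^{p-2}}{p}N(u)$ in $t$ gives
\[
\max_{t>0}E(u_t)=\Phi\big(T(u),N(u)\big):=\frac{p-4}{2(p-2)}\left(\frac{p}{\lambda(p-2)}\right)^{\frac{2}{p-4}}\frac{T(u)^{\frac{p-2}{p-4}}}{N(u)^{\frac{2}{p-4}}},
\]
so that $\Phi$ is continuous on $(0,\infty)\times(0,\infty)$, strictly increasing in its first argument and strictly decreasing in its second. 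I would fix $c_1<c_2$ and aim to prove $\gamma(c_2)\le\gamma(c_1)$, which yields the claimed monotonicity.

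Given $\eps>0$, I would choose (by density of $C_0^\infty(\R^2)$ in $H^1(\R^2)$ and continuity of $T,N$) a function $u\in S(c_1)\cap C_0^\infty(\R^2)$ with $\Phi\big(T(u),N(u)\big)\le\gamma(c_1)+\eps$. The idea is to append to $u$ a ``cheap'' bump carrying the extra mass $c_2-c_1$ without increasing the energy. To this end I would fix any $\phi\in S(c_2-c_1)$ and set $w_\delta:=\phi_\delta$, i.e. $w_\delta(x):=\delta\phi(\delta x)$; then $w_\delta\in S(c_2-c_1)$ and, by Lemma \ref{escaling}, $T(w_\delta)=\delta^2\,T(\phi)$ and $N(w_\delta)=\delta^{p-2}N(\phi)$, both tending to $0$ as $\delta\to0^+$. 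Thus for $\delta$ small the bump has arbitrarily small kinetic and $L^p$ content while retaining its prescribed mass.

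Next I would transport this bump far from $u$: for $y\in\R^2$ set $v:=u+w_\delta(\cdot-y)$, which lies in $S(c_2)$ once $|y|$ is large enough that the two supports are disjoint. Repeating verbatim the cross-term estimates of Lemma \ref{subadd} — whose only input is the long-range decay of the kernels $G_j(x)\sim|x|^{-1}$ acting on disjointly supported profiles — gives $N(v)=N(u)+N(w_\delta)$ exactly, and $T(v)=T(u)+T(w_\delta)+o_{|y|}(1)$ as $|y|\to\infty$ with $u,\delta$ fixed (here $T$ and $N$ of the translated bump coincide with those of $w_\delta$ by translation invariance). Consequently, by the minimax characterisation and the continuity of $\Phi$,
\[
\gamma(c_2)\le\Phi\big(T(v),N(v)\big)=\Phi\big(T(u)+T(w_\delta)+o_{|y|}(1),\,N(u)+N(w_\delta)\big).
\]
Letting first $|y|\to\infty$ and then $\delta\to0^+$, the right-hand side converges to $\Phi\big(T(u),N(u)\big)\le\gamma(c_1)+\eps$; since $\eps>0$ is arbitrary, $\gamma(c_2)\le\gamma(c_1)$.

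The main obstacle is precisely the presence of the nonlocal gauge terms $\int_{\R^2}(A_1^2+A_2^2)|u|^2\,dx$ and $\mbox{Im}\int_{\R^2}(A_1\partial_1u+A_2\partial_2u)\,\overline{u}\,dx$ inside $T$: unlike for a purely local nonlinearity, a naive mass-changing rescaling of a single profile inflates these terms, since the $A_j$ are quadratic in $u$ and homogeneous of degree $-1$ in $x$, so the three pieces of $T$ scale with mutually incompatible powers and no single dilation lowers $T$ while raising the mass. The device of carrying an independent, slowly spreading bump to spatial infinity circumvents this difficulty, because Lemma \ref{subadd} already guarantees that every interaction integral between two far-apart profiles is negligible, so the bump contributes only its own vanishing $T$ and $N$.
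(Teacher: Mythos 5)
Your proposal is correct and follows essentially the same route as the paper's proof: both rest on the characterisation $\gamma(c)=\inf_{u\in S(c)}\max_{t>0}E(u_t)$ and on appending to a compactly supported near-optimal profile of mass $c_1$ a disjointly supported, flattened bump carrying the extra mass $c_2-c_1$, whose kinetic and $L^p$ content vanish and whose nonlocal interactions with the main profile are killed by the decay estimates of Lemma \ref{subadd}. The only cosmetic differences are your explicit formula $\Phi(T,N)$ (the paper instead compares $\max_{s>0}E((w_t^{\delta})_s)$ with $\max_{s>0}E((u_1)_s)$ directly) and your use of two parameters, flattening $\delta$ and translation $y$, where the paper's single scaling parameter simultaneously flattens the annular bump and pushes its support outward; just make sure to take $\phi\in C^{\infty}_0(\R^2)\cap S(c_2-c_1)$ so that the supports are genuinely disjoint for large $|y|$.
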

\begin{proof}
To prove this, it is equivalent to show that $\gamma(c_2) \leq \gamma(c_1)$ if $0 <c_1 <c_2$. From the definition of $\gamma(c)$ and Lemma \ref{unique}, we know that, for any $\eps>0$, there exists $u_1 \in \mathcal{M}(c_1)$ such that
$$
E(u_1) \leq \gamma(c_1) + \frac{\eps}{2}, \quad \max_{t >0} E((u_1)_t)=E(u_1).
$$
By the density of $C^{\infty}_0(\R^2)$ in $H^1(\R^2)$, then there is $u_1^{\delta} \in C^{\infty}_0(\R^2)$ with $\text{supp}\, u_1^{\delta} \subset B_{\frac{1}{\delta}}(0)$ such that $\|u_1 -u_1^{\delta}\|=o(\delta)$. Hence
\begin{align} \label{nonincr}
\begin{split}
\int_{\R^2} |D_1 u_1^{\delta}|^2 + |D_2 u_1^{\delta}|^2\,dx&=\int_{\R^2} |D_1 u_1|^2 + |D_2 u_1|^2\,dx+o(\delta),\\
\int_{\R^2}|u_1^{\delta}|^p \, dx &=  \int_{\R^2}|u_1|^p \, dx +o(\delta).
\end{split}
\end{align}
Let $v^{\delta} \in C^{\infty}_0(\R^2)$ be such that $\text{supp}\,v^{\delta} \subset B_{1+\frac{2}{\delta}}(0)\backslash B_{\frac{2}{\delta}}(0)$, and define
$$
v_0^{\delta}:=\left(c_2-\|u_1^{\delta}\|_2^2\right)^{\frac 12} \frac{v^{\delta}}{\|v^{\delta}\|_2}.
$$
For any $t \in (0, 1)$, we now set $w_t^{\delta}:=u_1^{\delta} + (v_0^{\delta})_t$. Note that
$$
\text{dist}(\text{supp} \, u_1^{\delta}, \text{supp} \,(v_0^{\delta})_t) \geq \frac{1}{\delta} \left(\frac{2}{{t}}-1\right)>0,
$$
then $w_{t}^{\delta} \in S(c_2)$. By applying \eqref{nonincr}, we infer that
\begin{align*}
\int_{\R^2} |D_1 w_t^{\delta}|^2 + |D_2 w_t^{\delta}|^2\,dx &\to \int_{\R^2} |D_1 u_1|^2 + |D_2 u_1|^2\,dx, \\
\int_{\R^2}|w_t^{\delta}|^p \, dx &\to \int_{\R^2} |u_1|^p \, dx
\end{align*}
as $\delta, t \to 0$. Thus, for any $\delta, t >0$ small enough,
$$
\gamma(c_2) \leq \max_{s > 0}E((w_t^{\delta})_s) \leq \max_{s >0} E((u_1)_s) + \frac{\eps} {2} \leq \gamma(c_1) +\eps,
$$
and the proof is completed.
\end{proof}

\begin{lem} \label{c}
Assume $p>4$. Let $u_c \in S(c)$ satisfy the equation
\begin{align} \label{equ}
-\left(D_1D_1 + D_2D_2\right) u_c +  A_0 u_c +\alpha_c u_c = \lambda |u_c|^{p-2} u_c,
\end{align}
then there exists a constant $\hat{c} >0$ such that, for any $0<c<\hat{c}$, $\alpha_c >0$.
\end{lem}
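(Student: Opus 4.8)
The plan is to derive an exact formula for the Lagrange multiplier $\alpha_c$ and then show that the nonlocal gauge contribution to it is negligible when $c$ is small. First I would pair the equation \eqref{equ} with $\overline{u_c}$ and integrate; using the integration-by-parts identity \eqref{observe1}, namely $\int_{\R^2}D_jD_j u_c\,\overline{u_c}\,dx=-\int_{\R^2}|D_j u_c|^2\,dx$, together with $u_c\in S(c)$ and the fact that $A_0$ is real-valued, this gives
\begin{align*}
\int_{\R^2}|D_1 u_c|^2+|D_2 u_c|^2\,dx+\int_{\R^2}A_0|u_c|^2\,dx+\alpha_c c=\lambda\int_{\R^2}|u_c|^p\,dx.
\end{align*}
On the other hand, $u_c$ solves the system \eqref{system1}, so by Lemma \ref{Ph} it satisfies $Q(u_c)=0$, i.e. $\int_{\R^2}|D_1 u_c|^2+|D_2 u_c|^2\,dx=\frac{\lambda(p-2)}{p}\int_{\R^2}|u_c|^p\,dx$. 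Writing $K:=\int_{\R^2}|D_1 u_c|^2+|D_2 u_c|^2\,dx>0$ and eliminating the $L^p$-term, these two relations combine into the clean expression
\begin{align*}
\alpha_c c=\frac{2\lambda}{p}\int_{\R^2}|u_c|^p\,dx-\int_{\R^2}A_0|u_c|^2\,dx=\frac{2}{p-2}K-\int_{\R^2}A_0|u_c|^2\,dx.
\end{align*}
Thus it suffices to establish $\int_{\R^2}A_0|u_c|^2\,dx\le CKc$ with $C$ independent of $c$, since then $\alpha_c c\ge K\left(\frac{2}{p-2}-Cc\right)$, which is positive once $c<\hat c:=\frac{2}{C(p-2)}$.

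To control the gauge term I would use identity \eqref{Aid} to split $\int_{\R^2}A_0|u_c|^2\,dx=2\int_{\R^2}(A_1^2+A_2^2)|u_c|^2\,dx+2\,\mbox{Im}\int_{\R^2}(A_1\partial_1 u_c+A_2\partial_2 u_c)\overline{u_c}\,dx$ and estimate each piece. The point is that $A_1,A_2$ are built from the mass density $|u_c|^2$ through Lemma \ref{Aineq}, so every such term carries extra factors of the $L^2$-norm. Concretely, the diamagnetic Gagliardo--Nirenberg inequality \eqref{MGN} yields $\|u_c\|_t\le CK^{(1-2/t)/2}c^{1/t}$ for all $t\ge2$; combined with Lemma \ref{Aineq} and the H\"older inequality, a direct bookkeeping of exponents produces $\int_{\R^2}(A_1^2+A_2^2)|u_c|^2\,dx\le CKc^2$. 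For the first-order term I would first note, via \eqref{bdd1} with $\eps=\frac14$ and the bound just obtained, that $\|\nabla u_c\|_2^2\le CK$ uniformly for $c\le1$; then H\"older together with $\|A_j\|_q\le C\|u_c\|_{2s}^2$ and \eqref{MGN}, using the dual exponents $\frac1s+\frac1{s'}=1$, yields $\left|\mbox{Im}\int_{\R^2}(A_1\partial_1 u_c+A_2\partial_2 u_c)\overline{u_c}\,dx\right|\le CKc$. Adding the two estimates gives $\int_{\R^2}A_0|u_c|^2\,dx\le CKc$ for $c\le1$, as required.

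The main obstacle is the sharp tracking of the scaling in the second paragraph. Unlike the pure power $\int_{\R^2}|u_c|^p\,dx$, the first-order gauge term mixes $\nabla u_c$ (where only $\nabla|u_c|$ is controlled by $K$ through the diamagnetic inequality) with the nonlocal factors $A_j$, so one must first recover the \emph{a priori} bound $\|\nabla u_c\|_2\le CK^{1/2}$ and then choose the H\"older exponents so that the powers of $c$ add up to exactly $c^{+1}$ rather than $c^{0}$; the crude bound obtained directly from \eqref{bdd1} only gives $O(K)$ and is insufficient to beat the leading term $\frac{2}{p-2}K$. Once the refined estimate $\int_{\R^2}A_0|u_c|^2\,dx\le CKc$ is in hand, the positivity of $\alpha_c$ for small $c$ follows at once from the displayed identity together with $K>0$, and I would set $\hat c$ accordingly.
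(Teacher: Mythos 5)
Your proposal is correct and follows essentially the same route as the paper: both combine testing \eqref{equ} against $\overline{u_c}$ with the Pohozaev identity $Q(u_c)=0$ from Lemma \ref{Ph} to reach the identity $\alpha_c\, c=\frac{2}{p-2}\int_{\R^2}|D_1u_c|^2+|D_2u_c|^2\,dx-\int_{\R^2}A_0|u_c|^2\,dx$, split the gauge term via \eqref{Aid}, and then show it is negligible for small $c$ using Lemma \ref{Aineq}. The only divergence is in the final bookkeeping: the paper rewrites everything in terms of $\|\nabla u_c\|_2^2$ and controls the cross term with \eqref{bdd1} for a fixed small $\eps$ (so its smallness comes from $\eps$ plus a factor $c^2$), whereas you keep $K=\int_{\R^2}|D_1u_c|^2+|D_2u_c|^2\,dx$ as the reference quantity, prove $\|\nabla u_c\|_2^2\le CK$, and extract genuine bounds $CKc^2$ and $CKc$ — a slightly sharper variant of the same estimate that also yields an explicit threshold $\hat c$.
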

\begin{proof}
Since $u_c$ is a solution to the equation \eqref{equ}, then $Q(u_c)=0$ by Lemma \ref{Ph}, namely
\begin{align} \label{a1}
\int_{\R^2}|D_1 u_c|^2 +|D_2 u_c|^2 \, dx = \frac{\lambda (p-2)}{p} \int_{\R^2}|u|^p \, dx.
\end{align}
On the other hand, multiplying \eqref{equ} by $u_c$ and integrating on $\R^2$, then
\begin{align} \label{a2}
\int_{\R^2}|D_1 u_c|^2 +|D_2 u_c|^2 \, dx +\int_{\R^2} A_0(u_c) |u_c|^2 \, dx + \alpha_c \int_{\R^2} |u_c|^2 \, dx =\lambda \int_{\R^2} |u_c|^p \, dx.
\end{align}
By combining \eqref{a1} and \eqref{a2}, thus
\begin{align} \label{alpha}
\begin{split}
\alpha_c \int_{\R^2} |u_c|^2 \, dx &= \frac{2}{p-2} \int_{\R^2}|D_1 u_c|^2 +|D_2 u_c|^2 \, dx -\int_{\R^2} A_0(u_c) |u_c|^2 \, dx  \\
&=\frac{2}{p-2} \int_{\R^2}|\nabla u_c|^2 \, dx + \frac{6- 2p}{p-2} \int_{\R^2} \left(A_1^2(u_c) + A_2^2(u_c)\right) |u_c|^2 \, dx \\
& \quad +\frac{8- 2p}{p-2} \, \mbox{Im} \int_{\R^2} \left(A_1(u_c)\, \partial_1 u_c + A_2(u_c) \,\partial_2 u_c\right) \overline{u}_c \, dx,
\end{split}
\end{align}
where we used the identity \eqref{Aid}. Additionally, according to \eqref{bdd} and the Gagliardo-Nirenberg inequality \eqref{GN}, we get that
\begin{align} \label{csmall}
\int_{\R^2}\left(A_1^2(u_c) + A_2^2(u_c)\right) |u_c|^2 \, dx  \leq C c^{2} \int_{\R^2} |\nabla u_c|^2 \, dx.
\end{align}
Applying \eqref{csmall} and  \eqref{bdd1} with $\eps>0$ small enough, we then infer from \eqref{alpha} that $\alpha_c >0$ for $c>0$ small enough.
\end{proof}

Based upon the previous results, we now are ready to prove the existence of ground state to \eqref{system1}-\eqref{mass}.

\begin{proof}[Proof of Theorem \ref{existence}]
By Lemma \ref{ps}, there exists a Palais-Smale sequence $\{u_n\} \subset \mathcal{M}(c)$ for $E$ restricted on $S(c)$ at the level $\gamma(c)$. From Lemma \ref{pre}, there are a constant $\alpha_c \in \R$ and a nontrivial $u_c \in H^1(\R^2)$ as the weak limit of the sequence $\{u_n\}$ in $H^1(\R^2)$ satisfying the equation \eqref{equ}. Moreover, it follows from Lemma \ref{c} that $\alpha_c>0$ for any $0<c<\hat{c}$. As a consequence of Lemmas \ref{lpconv}-\ref{nonincreasing}, we obtain that $\|u_n-u_c\|_p=o_n(1)$ and $E(u_c)=\gamma(c)$. Thus we conclude from Lemma \ref{pre} that $\|u_n-u_c\| =o_n(1)$, which in turn suggests that $u_c \in S(c)$ is a ground state to \eqref{system1}-\eqref{mass}, and the proof is completed.
\end{proof}

\begin{prop} \label{nonnegative}
Assume $p>4$. If $u\in S(c)$ with $E(u) = \gamma(c)$ satisfies the equation
\begin{align} \label{uequa}
-\left(D_1D_1 + D_2D_2\right) u +  A_0 u +\alpha u = \lambda |u|^{p-2} u,
\end{align}
then $\alpha \geq 0$. In addition, if $\alpha >0$, then the function $c \mapsto \gamma(c)$ is strictly decreasing on a right neighborhood of $c$.
\end{prop}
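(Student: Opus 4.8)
The plan is to exploit the minimality of $\gamma$ together with the mass-changing scaling $u\mapsto su$, and to identify the Lagrange multiplier $\alpha$ with a multiple of the one-sided derivative of $\gamma(s^2c)$ at $s=1$. Concretely, for $s>0$ near $1$ I would set $w_s:=su\in S(s^2c)$ and, using Lemma \ref{unique}, let $t(s):=t_{w_s}>0$ be the unique dilation with $(w_s)_{t(s)}\in\mathcal{M}(s^2c)$. Writing $g(s):=\max_{t>0}E((w_s)_t)=E((w_s)_{t(s)})$, the definition of $\gamma$ gives the comparison $\gamma(s^2c)\le g(s)$ for all $s$, while $Q(u)=0$ forces $t(1)=1$ and hence the equality $g(1)=E(u)=\gamma(c)$.

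The heart of the argument is the computation $g'(1)=-\alpha c$. Using the scaling identities of Lemma \ref{escaling} and the homogeneities $A_j(sw)=s^2A_j(w)$, and noting $(su)_t=s\,u_t$, I would first record
\[
E(s\,u_t)=\frac{s^2t^2}{2}\,a+\frac{s^6t^2}{2}\,b+s^4t^2\,d-\frac{\lambda s^pt^{p-2}}{p}\,e,
\]
where $a=\int_{\R^2}|\nabla u|^2$, $b=\int_{\R^2}(A_1^2+A_2^2)|u|^2$, $d=\mbox{Im}\int_{\R^2}(A_1\partial_1u+A_2\partial_2u)\overline{u}$ and $e=\int_{\R^2}|u|^p$. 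Since $p>4$, the first-order condition $\partial_tE(su_t)=0$ together with $Q(u)=0$ and the implicit function theorem shows that $s\mapsto t(s)$ is $C^1$ near $1$ with $t(1)=1$; because $\partial_tE(su_t)\big|_{s=t=1}=Q(u)=0$ by \eqref{DE}, the chain rule collapses to $g'(1)=\partial_sE(su_t)\big|_{s=t=1}=a+3b+4d-\lambda e$. Finally, substituting $\lambda e=\tfrac{p}{p-2}(a+b+2d)$ (which is exactly $Q(u)=0$) and comparing with identity \eqref{alpha} gives $a+3b+4d-\lambda e=-\alpha c$, i.e. $g'(1)=-\alpha c$.

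With $g'(1)=-\alpha c$ in hand, both assertions follow from the monotonicity of $\gamma$ (Lemma \ref{nonincreasing}). For $\alpha\ge0$ I would let $s\to1^-$: since $s^2c<c$ gives $\gamma(s^2c)\ge\gamma(c)$, the difference quotient $\big(\gamma(s^2c)-\gamma(c)\big)/(s-1)$ is $\le0$, whereas $\gamma(s^2c)\le g(s)$ with $\gamma(c)=g(1)$ forces it to be $\ge\big(g(s)-g(1)\big)/(s-1)\to g'(1)=-\alpha c$; letting $s\uparrow1$ yields $-\alpha c\le0$, hence $\alpha\ge0$. For the second assertion, if $\alpha>0$ then $g'(1)=-\alpha c<0$, so $g(s)<g(1)$ for $s\in(1,1+\delta)$, whence $\gamma(\bar c)\le g(s)<g(1)=\gamma(c)$ for every $\bar c=s^2c$ in a right neighbourhood of $c$; this strict drop below $\gamma(c)$ is the asserted strict decrease to the right of $c$.

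The main obstacle I anticipate is the rigorous justification that $g$ is differentiable at $s=1$ — that the maximizer $t(s)$ is attained, unique, and $C^1$ — so that the envelope/chain-rule identity $g'(1)=-\alpha c$ is licit; here Lemma \ref{unique} supplies existence and uniqueness of $t(s)$, and the nondegeneracy of the maximum (equivalently $p>4$, giving $\partial_t^2E<0$ there) feeds the implicit function theorem. A secondary point demanding care is the bookkeeping of the four distinct scaling exponents $2,6,4,p-2$ attached to $a,b,d,e$, since any slip there would corrupt the crucial identification $g'(1)=-\alpha c$.
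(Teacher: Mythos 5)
Your proposal is correct and is essentially the paper's own argument in envelope-theorem clothing: the paper also works with the two-parameter family $u_{t,\tau}(x)=\tau t u(tx)$, computes $\partial_\tau \beta_E(1,1)=-\alpha c$ (your $g'(1)=-\alpha c$, and your exponent bookkeeping $s^2,s^6,s^4,s^p$ checks out), uses the implicit function theorem on $Q=0$ — the same curve as your $\partial_t E=0$, since $\frac{d}{dt}E(u_t)=\frac1t Q(u_t)$ — and concludes via Lemma \ref{nonincreasing}. The only cosmetic difference is that the paper phrases the case $\alpha<0$ as a contradiction with nonincreasingness by moving mass to the left, whereas you extract $\alpha\geq 0$ from one-sided difference quotients; the substance is identical.
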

\begin{proof}
To begin with, we claim that if $\alpha>0$ and $\alpha<0$, then the function $c  \mapsto \gamma(c)$ is strictly decreasing
and strictly increasing on a right neighborhood of $c$, respectively. To prove this, for any $t,\lambda > 0$, let us introduce $u_{t,\tau}(x):=\tau tu(tx)$, and define
$\beta_E (t,\tau):=E(u_{t,\tau})$, $\beta_Q (t,\tau):= Q(u_{t,\tau})$. Since $u$ is a solution to \eqref{uequa}, then $Q(u)=0$ by Lemma \ref{Ph}. Thus it is easy to compute that
$$
\dfrac{\partial \beta_E}{\partial t}(1,1)=0, \quad  \dfrac{\partial^2 \beta_E}{\partial t^2}(1,1)<0, \quad \dfrac{\partial \beta_E}{\partial \tau}(1,1)=-\alpha\, c.
$$
As a consequence, for any $|\delta_t|>0$ small enough and $\delta_\tau>0$,
\begin{align} \label{lessthan}
\beta_E (1+\delta_t, 1+\delta_{\tau})<\beta_E (1,1) \,\,\,\mbox{ if } \alpha >0,
\end{align}
and
\begin{align} \label{lessthan+}
\beta_E (1+\delta_t, 1-\delta_{\tau})<\beta_E (1,1) \,\,\,\mbox{ if } \alpha <0.
\end{align}
Observe that
$$
\beta_{Q}(1, 1)=0, \quad \dfrac{\partial \beta_Q}{\partial t}(1,1)\neq 0,
$$
it then follows from the implicit function theorem that there exist a constant $\varepsilon>0$ and a continuous function $g :[1-\varepsilon ,1+\varepsilon] \mapsto \R$ satisfying $g(1)=1$ such that $\beta_Q (g(\tau), \tau)=0$ for any $\tau \in [1-\varepsilon, 1+\varepsilon]$.
Therefore, we derive from \eqref{lessthan} and \eqref{lessthan+} that
$$
\gamma((1+\varepsilon)c)=\inf_{u\in \mathcal{M}((1+\varepsilon )c)}E(u)\leq E (u_{g(1+\varepsilon), 1+\varepsilon , })< E(u)=\gamma(c) \,\,\,\mbox{ if } \alpha >0,
$$
and
$$
\gamma((1-\varepsilon)c)=\inf_{u\in \mathcal{M}((1-\varepsilon )c)}E(u)\leq E (u_{g(1-\varepsilon), 1-\varepsilon ,})< E(u)=\gamma(c) \,\,\,\mbox{ if } \alpha <0,
$$
respectively, thus the claim follows. With the help of the claim and Lemma \ref{nonincreasing}, we then deduce that $\alpha \geq 0$, and the proof is completed.
\end{proof}

We now are in a position to discuss the existence of infinitely many radially symmetric solutions to \eqref{system1}-\eqref{mass}. To do this, let us first define a transformation $\sigma: H^1_{rad}(\R^2) \to H^1_{rad}(\R^2)$ by $\sigma(u)=-u$, and we say that a set $A \subset H^1_{rad}(\R^2)$ is $\sigma$-invariant if $\sigma (A)=A$. We next introduce the definition of genus of a set, due to M.A. Krasnosel'skii.

\begin{defi}\label{genus}
For any closed $\sigma$-invariant set $A \subset H^1_{rad}(\R^2)$, the genus of $A$ is defined by
$$
\mathcal{\gamma}(A):= \min \{n \in \N^+ : \exists \ \psi : A \rightarrow \R^n\backslash \{0\}, \psi \,\, \text{is continuous and odd}\}.
$$
When there is no $\psi$ as described above, we set $\mathcal{\gamma}(A):= \infty.$

\end{defi}
Let $\mathcal{A}$ be a family of compact and $\sigma$-invariant sets contained in $\mathcal{M}_{rad}(c)$. For any $k \in \N^+$, set
$$
\mathcal{A}_{k}:=\{A \in \mathcal{A}: \mathcal{\gamma}(A) \geq k\},
$$
and
\begin{align*}
\beta_k:=\inf_{A \in \mathcal{A}_{k}}\sup_{u \in A}E(u).
\end{align*}

First of all, we justify that, for any $k \in \N^+ $, $\beta_k$ is well-defined.

\begin{lem} \label{nonempty}
Assume $p>4$, then, for any $k \in \N^+ $, $\mathcal{A}_k \neq \emptyset.$
\end{lem}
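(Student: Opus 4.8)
The plan is to construct, for each fixed $k \in \N^+$, an explicit compact $\sigma$-invariant subset of $\mathcal{M}_{rad}(c)$ whose genus is at least $k$, which immediately gives a member of $\mathcal{A}_k$. First I would exploit that $H^1_{rad}(\R^2)$ is infinite-dimensional: fix $k$ linearly independent functions $\phi_1,\dots,\phi_k \in C^{\infty}_0(\R^2)\cap H^1_{rad}(\R^2)$, put $V_k:=\text{span}\{\phi_1,\dots,\phi_k\}$, and consider the $L^2$-sphere $\mathcal{S}_k:=V_k\cap S(c)=\{u\in V_k:\|u\|_2^2=c\}$. As a sphere of radius $\sqrt{c}$ in the $k$-dimensional inner-product space $(V_k,\langle\cdot,\cdot\rangle_{L^2})$, the set $\mathcal{S}_k$ is compact, invariant under $u\mapsto -u$, and homeomorphic to the standard sphere $S^{k-1}$, so that $\gamma(\mathcal{S}_k)=k$.

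Next I would transport $\mathcal{S}_k$ onto the Pohozaev manifold by means of the fibering map. By Lemma \ref{unique}, for every $u\in S(c)$ there is a unique $t_u>0$ with $u_{t_u}\in\mathcal{M}(c)$, given explicitly by \eqref{tu}; set $m(u):=u_{t_u}$. Since the scaling $u_t(x)=tu(tx)$ preserves radial symmetry, $m$ maps $S_{rad}(c)$ into $\mathcal{M}_{rad}(c)$, and I would take $A:=m(\mathcal{S}_k)$. To see that $A$ has the required properties, note that the continuity of $m$ on $\mathcal{S}_k$ follows from formula \eqref{tu} once one observes that on the compact set $\mathcal{S}_k$ both $\int_{\R^2}|D_1u|^2+|D_2u|^2\,dx$ and $\int_{\R^2}|u|^p\,dx$ are continuous and strictly positive — the latter because $\|u\|_2^2=c>0$, the former by the diamagnetic bound in \eqref{MGN} since a nonzero $H^1$ function cannot have constant modulus. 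Hence $t_u$ is continuous and bounded away from $0$ and $\infty$ on $\mathcal{S}_k$, so $A$ is compact. Moreover $m$ is odd: for $-u$ one has $A_j(-u)=A_j(u)$, whence $D_j(-u)=-D_ju$ and $|D_j(-u)|^2=|D_ju|^2$, so $t_{-u}=t_u$ by \eqref{tu} and $m(-u)=-m(u)$. In particular $A=m(\mathcal{S}_k)$ is $\sigma$-invariant and contained in $\mathcal{M}_{rad}(c)$, so $A\in\mathcal{A}$.

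Finally I would invoke the genus estimate. Since $m|_{\mathcal{S}_k}:\mathcal{S}_k\to A$ is an odd continuous surjection, the standard monotonicity property of the genus (compatible with Definition \ref{genus}: composing an odd map $\psi:A\to\R^n\setminus\{0\}$ with $m|_{\mathcal{S}_k}$ produces an odd map on $\mathcal{S}_k$) yields $\gamma(A)\geq\gamma(\mathcal{S}_k)=k$. Therefore $A\in\mathcal{A}_k$ and $\mathcal{A}_k\neq\emptyset$.

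The only delicate point is the continuity and oddness of the fibering map $m$ on the finite-dimensional sphere, since this is precisely what legitimizes the transfer of genus from $\mathcal{S}_k$ to $A$; everything else reduces to the finite-dimensional structure of $V_k$ together with Lemma \ref{unique}. I expect this continuity/positivity verification — ensuring $t_u$ stays uniformly controlled on $\mathcal{S}_k$ — to be the main (though routine) obstacle, while the genus computation $\gamma(S^{k-1})=k$ and its monotonicity are classical.
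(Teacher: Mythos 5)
Your proposal is correct and follows essentially the same route as the paper: take a $k$-dimensional subspace of $H^1_{rad}(\R^2)$, note that its intersection with $S(c)$ is a compact symmetric sphere of genus $k$, push it onto $\mathcal{M}_{rad}(c)$ via the fibering map $u \mapsto u_{t_u}$ from Lemma \ref{unique}, and conclude by genus monotonicity under odd continuous maps. The paper leaves the continuity and oddness of this map as ``easy to see,'' which you verify explicitly via \eqref{tu} and the identity $A_j(-u)=A_j(u)$; otherwise the two arguments coincide.
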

\begin{proof}
For any fixed $k \in \N^+$, let $V \subset H^1_{rad}(\R^2)$ be such that $\dim(V)=k$, and set $SV(c):=V \cap S(c)$. By the basic property of genus, see \cite[Theorem 10.5]{AmMa}, we then have that $\gamma (SV(c))= \dim V =k$. From Lemma \ref{unique}, for any $u \in SV(c)$, there exists a unique $t_u >0$ such that $ u_{t_u} \in \mathcal{M}_{rad}(c)$. Thus we are able to define a map $\eta: SV(c)\rightarrow \mathcal{M}_{rad}(c)$ by $\eta(u)=u_{t_u}$, and it is easy to see that $\eta$ is continuous and odd. By using \cite[Lemma 10.4]{AmMa}, hence we obtain that $\gamma(\eta(SV(c))) \geq \gamma(SV(c))=k$, and this indicates that $\mathcal{A}_k \neq \emptyset$.
\end{proof}

The following Definition \ref{defi1} and Lemma \ref{ps1} can be regarded as a counterpart of Definition \ref{homotopy} and Lemma \ref{ps}, respectively.

\begin{defi} \label{defi1}
Let $B$ be a closed $\sigma$-invariant subset of $Y \subset H^1_{rad}(\R^2)$. We say that a class $\mathcal{F}$ of compact subsets of $Y$ is a $\sigma$-homotopy stable family with closed boundary $B$ if
\begin{enumerate}
\item [(i)] every set in $\mathcal{F}$ is $\sigma$-invariant;
\item [(ii)]every set in $\mathcal{F}$ contains $B$;
\item [(iii)] for any $A\in \mathcal{F}$ and for any $\eta \in C([0,1]\times Y, Y)$ satisfying $\eta (t,u)=\eta (t,\sigma (u))$ for all $t\in [0,1]$, and $\eta (t,x)=x$ for all $(t,x)\in (\{0\}\times Y)\cup ([0,1]\times B)$, then $\eta (\{1\} \times A)\in
\mathcal{F}$.
\end{enumerate}
\end{defi}

\begin{lem} \label{ps1}
Let $\mathcal{F}$ be a $\sigma$-homotopy stable family of compact subsets of $\mathcal{M}_{rad}(c)$ with a close boundary $B$. Let
$$
c_{\mathcal{F}}:= \inf_{A\in \mathcal{F}}\max_{u\in A}E(u).
$$
Suppose that $B$ is contained in a connected component of $\mathcal{M}_{rad}(c)$ and
$$
\max \{\sup E(B),0\}<c_{\mathcal{F}}<\infty.
$$
Then there exists a Palais-Smale sequence $\{u_n\} \subset \mathcal{M}_{rad}(c)$ for $E$ restricted to $S_{rad}(c)$ at the level $c_{\mathcal{F}}$.
\end{lem}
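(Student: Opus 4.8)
The plan is to mirror the proof of Lemma~\ref{ps}, carrying out every step $\sigma$-equivariantly and working throughout in the radial space $H^1_{rad}(\R^2)$; by Remark~\ref{invar} a radial critical point of $E$ is a genuine one, so it suffices to produce a radial Palais--Smale sequence. The central device is again the reduced functional $F\colon S_{rad}(c)\to\R$, $F(u):=\max_{t>0}E(u_t)=E(u_{t_u})$, which by Lemma~\ref{unique} is well defined and of class $C^1$. It is \emph{even}, i.e. $\sigma$-invariant, because the fields $A_1,A_2$ depend only on $|u|^2$, whence $D_j(-u)=-D_ju$ and $E(-u)=E(u)$, while $t_{-u}=t_u$ by \eqref{tu}. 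Two structural facts will be used repeatedly: $F$ is constant along scaling orbits, $F(u_t)=F(u)$ for all $t>0$, and the map $m(u):=u_{t_u}$ is a continuous, $\sigma$-equivariant retraction of $S_{rad}(c)$ onto $\mathcal{M}_{rad}(c)$ that fixes $\mathcal{M}_{rad}(c)$ pointwise (since $t_u=1$ there) and satisfies $F\circ m=F$; moreover $F=E$ on $\mathcal{M}_{rad}(c)$, so $\inf_{A\in\mathcal{F}}\max_{A}F=\inf_{A\in\mathcal{F}}\max_{A}E=c_{\mathcal{F}}$.

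Next I would run the minimax step for $F$ on $S_{rad}(c)$ via the $\mathbb{Z}_2$-equivariant counterpart of the minimax principle \cite[Theorem~3.2]{Gh}. The delicate point is that the given family $\mathcal{F}$ is only $\sigma$-homotopy stable relative to deformations of $\mathcal{M}_{rad}(c)$, whereas the pseudo-gradient deformations of $F$ live a priori on $S_{rad}(c)$. This is reconciled by composing any such deformation with the retraction $m$: since $F\circ m=F$, the projected flow $m\circ\Phi_s$ preserves $\mathcal{M}_{rad}(c)$ and lowers $F$ exactly as $\Phi_s$ does, so the stability of $\mathcal{F}$ on $\mathcal{M}_{rad}(c)$ is enough to carry out the equivariant deformation argument at the level $c_{\mathcal F}$. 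Together with the boundary hypotheses $\max\{\sup E(B),0\}<c_{\mathcal F}<\infty$ and $B$ lying in one component of $\mathcal{M}_{rad}(c)$, this yields an (even) Palais--Smale sequence $\{\tilde u_n\}\subset S_{rad}(c)$ for $F|_{S_{rad}(c)}$ at the level $c_{\mathcal F}$ with $\mathrm{dist}_{H^1}(\tilde u_n,D_n)=o_n(1)$ for some minimizing sequence $\{D_n\}\subset\mathcal{M}_{rad}(c)$.

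The final step transports this to the desired sequence exactly as in Lemma~\ref{ps}. Set $u_n:=(\tilde u_n)_{\tilde t_n}\in\mathcal{M}_{rad}(c)$ with $\tilde t_n:=t_{\tilde u_n}$, so that $E(u_n)=F(\tilde u_n)=c_{\mathcal F}+o_n(1)$. Using the coercivity of $E$ on $\mathcal{M}(c)$ (Lemma~\ref{coercive}), the lower bound \eqref{D}, and $\mathrm{dist}_{H^1}(\tilde u_n,D_n)=o_n(1)$, I obtain $0<C_1\le\tilde t_n\le C_2$. Consequently the scaling isomorphism $\psi\mapsto\psi_{\tilde t_n^{-1}}$ between $T_{u_n}S_{rad}(c)$ and $T_{\tilde u_n}S_{rad}(c)$ satisfies $\|\psi_{\tilde t_n^{-1}}\|\le C\|\psi\|$, and the identity $E'(u_n)\psi=F'(\tilde u_n)(\psi_{\tilde t_n^{-1}})$ gives $\|E'|_{S_{rad}(c)}(u_n)\|\le C\,\|F'|_{S_{rad}(c)}(\tilde u_n)\|=o_n(1)$. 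Hence $\{u_n\}\subset\mathcal{M}_{rad}(c)$ is a Palais--Smale sequence for $E|_{S_{rad}(c)}$ at level $c_{\mathcal F}$, as required.

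The hardest part will be the correct bookkeeping in the two reductions above. First, one must verify that the equivariant deformation argument genuinely descends to $\mathcal{M}_{rad}(c)$ through $m$, i.e. that projecting an odd pseudo-gradient flow by the scaling retraction keeps it admissible and level-lowering; this is where the scaling-invariance $F\circ m=F$ is essential. Second, the reduced functional $F$ is indispensable in the transport step: a direct Lagrange-multiplier argument on $\mathcal{M}_{rad}(c)$ would force testing the relation $E'(u_n)=\mu_nP'(u_n)+\nu_nQ'(u_n)+o(1)$ against the scaling direction $u_n+x\cdot\nabla u_n$, whose $H^1$-norm is not controlled by $\|u_n\|$, so one cannot conclude $\nu_n\to0$ that way; the passage through $F$ replaces this unbounded direction by the bounded ones $\psi_{\tilde t_n^{-1}}$, and the two-sided bound $C_1\le\tilde t_n\le C_2$, which rests on $p>4$ through Lemmas~\ref{unique} and \ref{coercive}, is precisely what certifies boundedness. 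The extra covariant term $\mathrm{Im}\int_{\R^2}(A_1\partial_1 u+A_2\partial_2 u)\overline u\,dx$ does not disturb the scheme, since the scaling behaviour of all nonlocal contributions has already been recorded in \eqref{eut}.
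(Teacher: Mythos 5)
Your proposal is correct and takes essentially the same route as the paper: the paper's entire proof of Lemma \ref{ps1} is the remark that one repeats the proof of Lemma \ref{ps} verbatim, invoking the equivariant minimax principle \cite[Theorem 7.2]{Gh} in place of \cite[Theorem 3.2]{Gh}, which is exactly the scheme you execute (even functional $F$, equivariant minimax, then the scaling transport $u_n=(\tilde u_n)_{\tilde t_n}$ with the two-sided bound on $\tilde t_n$). Your extra care about the family $\mathcal{F}$ living in $\mathcal{M}_{rad}(c)$ rather than in $S_{rad}(c)$ --- reconciled by pushing deformations through the retraction $m(u)=u_{t_u}$ and using $F\circ m=F$ --- fills in a point the paper's one-line proof leaves implicit, and is consistent with its argument.
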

\begin{proof}
By applying \cite[Theorem 7.2]{Gh} instead of \cite[Theorem 3.2]{Gh}, the proof can be completed by an almost identical manner as the one of Lemma \ref{ps}, hence we omit the proof.
\end{proof}

Benefiting from the arguments above, we shall prove Theorem \ref{infsolutions}.

\begin{proof}[Proof of Theorem \ref{infsolutions}]
For any fixed $k \in \N^+$, from Lemma \ref{nonempty}, one has that $\beta_k <\infty$. In view of Lemma \ref{ps1}, there is a Palais-Smale sequence $\{u_n\} \subset \mathcal{M}_{rad}(c)$ for $E$ restricted on $S_{rad}(c)$ at the level $\beta_k$. From Lemma \ref{pre}, there are a constant $\alpha_c \in \R$ and a nontrivial $u_c \in H^1(\R^2)$ as the weak limit of $\{u_n\}$ in $H^1_{rad}(\R^2)$ satisfying the equation \eqref{equ}. In addition, $\alpha_c>0$ for $0<c<\hat{c}$, see Lemma \ref{c}. Since the embedding $H^1_{rad}(\R^2)\hookrightarrow L^p(\R^2)$ is compact, we then apply Lemma \ref{pre} to conclude that $\|u_n-u_c\|=o_n(1)$ and $u_c \in S(c)$ is a solution to \eqref{system1}-\eqref{mass} at the level $\beta_k$. Thus we know that \eqref{system1}-\eqref{mass} has infinitely many radial symmetric solutions.
\end{proof}

\section{The dynamic behaviors} \label{dynamical}

We devote this section to the study of the dynamical behaviors of ground states to \eqref{system1}-\eqref{mass}. First of all, let us show the local well-posedness of solutions to the Cauchy problem of the system \eqref{sys1} in $H^1(\R^2)$.

\begin{lem} \label{wellposed}
Assume $p>2$, then, for any $\varphi_0 \in H^1(\R^2)$, there exist a constant $T>0$ and a unique solution $\varphi \in C([0, T); H^1(\R^2))$ to the Cauchy problem of the system \eqref{sys1} with $\varphi(0)=\varphi_0$, satisfying the conservations of mass and energy, namely, for any $t \in [0, T)$,
$$
\|\varphi(t)\|_2=\|\varphi_0\|_2,\quad E(\varphi(t)) =E(\varphi_0),
$$
and the solution map $\varphi_0 \mapsto \varphi$ is continuous from $H^1(\R^2)$ to $C([0, T); H^1(\R^2))$. In addition, either $T= \infty$ or $\lim_{t \to T^-} \left(\|D_1 \varphi(t)\|_2 + \|D_2 \varphi(t)\|_2\right) =\infty$.
\end{lem}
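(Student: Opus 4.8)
The plan is to recast the Cauchy problem for \eqref{sys1} in Coulomb gauge \eqref{gauge} as a single nonlocal nonlinear Schr\"odinger equation for $\varphi$ and then run a contraction-mapping argument adapted to the magnetic structure. Using \eqref{defA}--\eqref{AA} to express $A_0,A_1,A_2$ as prescribed functionals of $\varphi$, and the pointwise identity behind \eqref{idd}, the first equation of \eqref{sys1} becomes
\begin{align*}
i\partial_t\varphi=-\Delta\varphi+\left(A_1^2+A_2^2\right)\varphi-2i\left(A_1\partial_1\varphi+A_2\partial_2\varphi\right)+A_0\varphi-\lambda|\varphi|^{p-2}\varphi.
\end{align*}
Writing $N(\varphi)$ for all terms on the right-hand side other than $-\Delta\varphi$, I would pass to the Duhamel formulation
\begin{align*}
\varphi(t)=e^{it\Delta}\varphi_0-i\int_0^t e^{i(t-s)\Delta}N(\varphi)(s)\,ds,
\end{align*}
and seek a fixed point in a ball of $X_T:=C([0,T];H^1(\R^2))\cap L^q([0,T];W^{1,r}(\R^2))$ for an admissible Strichartz pair $(q,r)$ in dimension two.

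First I would estimate the gauge fields. Since $A_1,A_2$ are Riesz-type potentials of $|\varphi|^2$ and $\nabla A_j$ is governed by a Calder\'on-Zygmund kernel, Lemma \ref{Aineq} together with the continuous embeddings $H^1(\R^2)\hookrightarrow L^t(\R^2)$ yields, for suitable exponents, bounds of the form $\|A_j\|_{W^{1,q}}\leq C\|\varphi\|^2$ and the corresponding control of $A_0$. These reduce every term of $N(\varphi)$ except the first-order magnetic one to the familiar subcritical setting, where H\"older's inequality, the Gagliardo-Nirenberg inequality \eqref{GN} and the inhomogeneous Strichartz estimate give the stability of the ball and the contraction property of the map on a sufficiently short interval.

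The hard part will be the derivative nonlinearity $A_1\partial_1\varphi+A_2\partial_2\varphi$: at the $H^1$ level a direct estimate in a dual Strichartz norm forces a derivative onto $\partial_j\varphi$, producing a second derivative of $\varphi$ that is not controlled, so the plain subcritical fixed point does not close. To overcome this I would exploit the covariant structure rather than fight it: either (i) absorb the first-order term into the covariant Laplacian $D_1D_1+D_2D_2$ and run the energy method at the level of the covariant gradient $\|D_1\varphi\|_2+\|D_2\varphi\|_2$, using the diamagnetic inequality (Lemma \ref{diaineq}) to pass between $\nabla|\varphi|$ and $D_j\varphi$; or (ii) regularize the data, solve first in $H^2(\R^2)$ where the quasilinear term is admissible and the above computations are rigorous, derive a priori bounds depending only on the covariant gradient, and recover the $H^1$ solution by a compactness-and-uniqueness limiting argument. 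I expect option (ii), in the spirit of the $H^2$ theory of \cite{BBS} combined with the refined estimates of \cite{Lim}, to be the cleanest route.

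Once the solution is constructed, conservation of mass follows by pairing the equation with $\overline{\varphi}$ and taking imaginary parts, the magnetic and $A_0$ contributions being real, while conservation of energy follows by differentiating $E(\varphi(t))$ along the flow using the $C^1$ structure recorded after \eqref{idd}; both identities are first verified on the regularized $H^2$ solutions and then extended by continuity. Uniqueness and continuous dependence are read off from the Lipschitz bound furnished by the contraction estimate. Finally, because the local existence time produced above can be taken to depend only on $\|D_1\varphi_0\|_2+\|D_2\varphi_0\|_2$, a standard continuation argument gives the blow-up alternative: the solution extends as long as this covariant gradient stays bounded, so either $T=\infty$ or $\|D_1\varphi(t)\|_2+\|D_2\varphi(t)\|_2\to\infty$ as $t\to T^-$. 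Thus the proof reduces to making step (ii) and the magnetic estimate precise.
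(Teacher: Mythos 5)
Your overall skeleton is reasonable and, in its broad lines, matches how the literature actually handles this system: Coulomb-gauge reduction via \eqref{defA}--\eqref{AA}, identification of the derivative term $A_1\partial_1\varphi+A_2\partial_2\varphi$ as the only genuine obstruction, $H^2$ regularization in the spirit of \cite{BBS}, conservation laws verified on regular solutions (your computations for mass and energy are the same as the paper's, which multiplies the first equation of \eqref{sys1} by $\overline{\varphi}$ and by $\partial_t\overline{\varphi}$), and a continuation argument for the blow-up alternative, which is legitimate here because mass plus the covariant gradient controls the full $H^1$ norm through Lemma \ref{Aineq} and \eqref{idd}. Note, however, that the paper does not prove local well-posedness at all: it simply invokes the ideas of \cite{huh2,Lim}. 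Read as a self-contained argument, your proposal has a genuine gap exactly where those references do their work.

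The gap is that the magnetic term is never actually closed, and your final claims are inconsistent with your own analysis. You correctly observe that the $H^1$ contraction fails because estimating $A_j\partial_j\varphi$ in a dual Strichartz norm loses a derivative; you then retreat to option (ii), a compactness limit of $H^2$ solutions, which at best yields \emph{existence}; and yet you conclude that uniqueness and continuity of $\varphi_0\mapsto\varphi$ follow ``from the Lipschitz bound furnished by the contraction estimate''---a bound which, by your own admission, does not exist at the $H^1$ level. This is precisely the historically hard point: your route (ii) is essentially what \cite{BBS} did, and it left $H^1$ uniqueness open; unconditional uniqueness in $L^{\infty}_t H^1$ was only obtained later in \cite{huh2}, and continuity of the flow map in $H^1$ in \cite{Lim}. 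Concretely, for two $H^1$ solutions $\varphi,\psi$ with the same datum, the difference equation contains $\left(A_j(\varphi)-A_j(\psi)\right)\partial_j\psi$, and with only $\partial_j\psi\in L^2$ and $A_j(\varphi)-A_j(\psi)$ controlled through Lemma \ref{Aineq}, no Gronwall estimate in $L^2$ closes. The missing idea is the null structure of the magnetic term: in Coulomb gauge $(A_1,A_2)=-\tfrac12\nabla^{\perp}\Delta^{-1}\left(|\varphi|^2\right)$, so $A_1\partial_1\varphi+A_2\partial_2\varphi$ is a Jacobian-type null form $\nabla^{\perp}N\cdot\nabla\varphi$ that annihilates parallel frequency interactions; this structure, combined with bilinear refinements of Strichartz estimates, is what \cite{huh2,Lim} exploit both to close the iteration and to prove uniqueness. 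Relatedly, your extension of energy conservation ``by continuity'' presupposes the very continuous dependence that is in question. So as written you obtain (modulo routine details) existence and the a priori identities, but not uniqueness, not continuity of the solution map, and hence not the lemma as stated.
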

\begin{proof}
The local well-posedness of solutions to the problem can be achieved by following the ideas developed in \cite{huh2, Lim}, where the problem was considered in the mass critical case. We next deduce that the conservation laws hold true. Multiplying the first equation in the system \eqref{sys1} by $\overline{\varphi}$, integrating on $\R^2$ and taking the real part, we then get the conservation of mass. Multiplying the first equation in the system \eqref{sys1} by $\partial_t \overline{\varphi}$, integrating on $\R^2$ and taking the imaginary part, we then obtain the conservation of energy. Since the constant $T>0$ is the maximal existence time of the solution, then the blowup alternative necessarily follows, and the proof is completed.
\end{proof}

Based upon the local well-posedness of solutions to the Cauchy problem of the system \eqref{sys1} in $H^1(\R^2)$, we are able to establish the global well-posedness of the problem.

\begin{proof} [Proof of Theorem \ref{globalexis}]
By Theorem \ref{wellposed}, we may suppose that $\varphi \in C([0, T);H^1(\R^2))$ is the solution to the Cauchy problem of the system \eqref{sys1} with $\varphi(0)=\varphi_0$. If $2<p<4$, or $p=4$ and $\|\varphi_0\|_2$ is small enough, by using the conservation laws and \eqref{MGN}, we then deduce that $\|D_1 \varphi(t)\|_2 + \|D_2 \varphi(t)\|_2 \leq C$ for any $t\in [0, T)$, where the constant $
C>0$ is independent of $t$. Thus the blowup alternative in Lemma \ref{wellposed} shows that $\varphi$ exists globally in time. Next we consider the case that $p>4$ and $\varphi_0 \in \mathcal{O}_c$. In this case, we argue by contradiction that $T < \infty$. By Lemma \ref{wellposed}, then
\begin{align} \label{blowup}
\lim_{t \to T^-} \|D_1 \varphi(t)\|_2 + \|D_2 \varphi(t)\|_2 =\infty.
\end{align}
Note that
$$
E(\varphi(t))-\frac{1}{p-2} Q(\varphi(t))= \frac{p-4}{2(p-2)} \left(\|D_1 \varphi(t)\|_2^2 + \|D_2 \varphi(t)\|_2^2\right),
$$
and the conservation of energy $E(\varphi(t))=E(\varphi_0)$ for any $t \in [0, T)$, by virtue of \eqref{blowup}, we then derive that $\lim_{t \to T^-} Q(\varphi(t))=- \infty$. Recall that $Q(\varphi_0) >0$, thus there exists a constant $t_0 \in (0, T)$ such that $Q(\varphi(t_0)) =0$. This suggests that $\varphi(t_0) \in \mathcal{M}(c)$, hence $E(\varphi(t_0) \geq \gamma(c)$. However, $E(\varphi(t_0))=E(\varphi_0) < \gamma(c)$, we then reach a contradiction. Therefore, the solution $\varphi$ exists globally in time, and we have completed the proof.
\end{proof}

In view of the global well-posedness of solutions to the Cauchy problem of the system \eqref{sys1} in the mass subcritical case, we are now able to prove the orbital stability of the set of minimizers to \eqref{gmin}.

\begin{proof} [Proof of Theorem \ref{stable}]
We argue by contradiction that $G(c)$ were orbitally unstable for some $0<c<c_0$, then there would exist a constant $\eps_0>0$, a sequence $\{t_n\} \subset \R^+$ and a sequence $\{\varphi_{0, n}\} \subset H^1(\R^2)$ with
\begin{align} \label{distmass}
\inf_{u\in G (c)} \|\varphi_{0, n} - u\|\leq \frac 1n,
\end{align}
so that
\begin{align} \label{contra}
\inf_{u\in G(c)}\|\varphi_n(t_n)-u\| \geq \eps_0,
\end{align}
where $\varphi_n \in C([0, \infty); H^1(\R^2))$ is the solution to the Cauchy problem of the system \eqref{sys1} with $ \varphi_n(0)=\varphi_{0, n}$.
We now define
$$
\tilde{\varphi}_n:=\frac{\varphi_n(t_n)}{\|\varphi_n(t_n)\|_2} c^{\frac 12},
$$
and $\{\tilde{\varphi}_n\} \subset S(c)$. By the conservation laws, we have that $\|\varphi_n(t_n)\|_2=\|\varphi_{0, n}\|_2$ and $E(\varphi_n(t_n))=E(\varphi_{0, n})$. Thus we apply \eqref{distmass} to derive that $E(\tilde{\varphi}_n)=E(\varphi_n(t_n))+o_n(1)=E(\varphi_{0, n})+o_n(1)=m(c)+o_n(1)$. Consequently, $\{\tilde{\varphi}_n\} \subset S(c)$ is a minimizing sequence to \eqref{gmin}. As a result of Theorem \ref{compactness}, we know that $\{\tilde{\varphi}_n\}$ is compact in $H^1(\R^2)$ up to translations, so is $\{\varphi_n(t_n)\}$, which then contradicts \eqref{contra}, and the proof is completed.
\end{proof}

We are now in a position to discuss the instability of ground states to \eqref{system1}-\eqref{mass} in the mass supercritical case. To do this, we first establish the following crucial result, whose proof will be postponed until in Appendix.

\begin{lem} \label{virial}
Assume $p>2$. Let $\xi \in C^4(\R^2, \R)$ be a radially symmetric function and let $\varphi \in C([0, T); H^1(\R^2))$ be a solution to the Cauchy problem of the system \eqref{sys1}. Define
\begin{align} \label{vxi}
V_{\xi}[\varphi(t)]:=\mbox{Im} \int_{\R^2} \bar{\varphi} \left(D_1 \varphi \, \partial_1 \xi + D_2 \varphi \, \partial_2 \xi \right) \, dx,
\end{align}
then, for any $t \in [0, T)$,
\begin{align*}
\frac{d}{dt}V_{\xi}[\varphi(t)] &= 2\int_{\R^2}\left(|D_1 \varphi|^2\, \partial_1^2 \xi
+2 \mbox{Re}\,\overline{D_1 \varphi} \, D_2 \varphi \, \partial_2\partial_1 \xi + |D_2 \varphi|^2\, \partial_2^2 \xi \right)\,dx \\
& \quad -\frac{\lambda(p-2)}{p}\int_{\R^2}|\varphi|^p \left(\partial_1^2 \xi + \partial_2^2 \xi \right)\, dx \\
& \quad -\frac 12 \int_{\R^2} |\varphi|^2 \left(\partial_1^4 \xi +  2 \partial_1^2 \partial_2^2 \xi + \partial_2^4 \xi \right)\, dx.
\end{align*}
\end{lem}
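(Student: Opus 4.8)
The plan is to recognize $V_\xi[\varphi(t)]$ as a weighted (gauge-invariant) momentum and to differentiate it in time, substituting the evolution equation \eqref{sys1} together with the constraint equations for the gauge fields. Writing $p_j:=\mbox{Im}(\bar\varphi\,D_j\varphi)$ and summing over $j=1,2$, we have $V_\xi=\int_{\R^2}p_j\,\partial_j\xi\,dx$, and the first step is to pass $\partial_t$ inside the integral and apply a gauge-covariant product rule. The key algebraic observation is that
\[
\partial_t(\bar\varphi\,D_j\varphi)=\overline{D_t\varphi}\,D_j\varphi+\bar\varphi\,D_tD_j\varphi,
\]
where the explicit $A_0$ contributions cancel precisely because $\bar\varphi\,D_j\varphi$ is gauge invariant; this is what guarantees that the final identity involves only gauge-invariant quantities.

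Next I would commute the covariant derivatives via $D_tD_j=D_jD_t+[D_t,D_j]$, with $[D_t,D_j]=i(\partial_tA_j-\partial_jA_0)=iF_{tj}$. The second and third equations of \eqref{sys1} identify the temporal field strengths as $F_{t1}=-\mbox{Im}(\bar\varphi D_2\varphi)=-p_2$ and $F_{t2}=\mbox{Im}(\bar\varphi D_1\varphi)=p_1$. Substituting the Schr\"odinger equation in the form $iD_t\varphi=-(D_1D_1+D_2D_2)\varphi-\lambda|\varphi|^{p-2}\varphi$ then splits $\frac{d}{dt}V_\xi$ into three groups: a kinetic group from the covariant Laplacian, a nonlinear group from $|\varphi|^{p-2}\varphi$, and a magnetic source group $\int_{\R^2}(p_1\partial_2\xi-p_2\partial_1\xi)\,|\varphi|^2\,dx$ coming from $\bar\varphi\,iF_{tj}\varphi=iF_{tj}|\varphi|^2$.

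Then I would integrate by parts in each group. In the kinetic group, moving the covariant derivatives onto $\partial_j\xi$ and repeatedly using the spatial commutator $[D_k,D_j]=iF_{kj}$ with the Gauss law $F_{12}=\partial_1A_2-\partial_2A_1=-\tfrac12|\varphi|^2$, one produces the symmetric Hessian contraction $2\int_{\R^2}\big(|D_1\varphi|^2\partial_1^2\xi+2\mbox{Re}(\overline{D_1\varphi}\,D_2\varphi)\partial_1\partial_2\xi+|D_2\varphi|^2\partial_2^2\xi\big)\,dx$ together with the biharmonic remainder $-\tfrac12\int_{\R^2}|\varphi|^2\,\Delta^2\xi\,dx$ (note $\partial_1^4\xi+2\partial_1^2\partial_2^2\xi+\partial_2^4\xi=\Delta^2\xi$), exactly as in the classical NLS virial computation. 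The nonlinear group yields $-\tfrac{\lambda(p-2)}{p}\int_{\R^2}|\varphi|^p\,\Delta\xi\,dx$ after one integration by parts. The decisive point is that all leftover gauge-field contributions cancel: the magnetic source group combines with the commutator terms generated in the kinetic integration by parts, and the antisymmetry of $F_{jk}$ against the symmetric Hessian $\partial_j\partial_k\xi$, the radial symmetry of $\xi$, and the Gauss law conspire to annihilate the residual quartic and mixed terms, leaving no dependence on $A_0,A_1,A_2$.

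I expect the main obstacle to be precisely this bookkeeping and cancellation of the gauge-field terms, which is why it is deferred to an appendix: one must track the momentum splitting $p_j=\mbox{Im}(\bar\varphi\,\partial_j\varphi)+A_j|\varphi|^2$, the nonlocal fields $A_j=\mp\tfrac12\,G_{j'}*|\varphi|^2$ from \eqref{AA}, and the commutator-generated quartic terms simultaneously, then verify that their sum vanishes. A secondary, technical obstacle is the justification of these formal manipulations for merely $H^1$ solutions: since $\varphi\in C([0,T);H^1(\R^2))$ the integrals and integrations by parts are a priori only formal, so I would first establish the identity for smoother data, using the local well-posedness in $H^s(\R^2)$, $s\ge1$, recorded before Lemma \ref{wellposed} (e.g.\ via \cite{Lim}), where every step is rigorous, and then pass to the limit using the continuity of the solution map in Lemma \ref{wellposed} together with the decay afforded by the cut-off nature of $\xi$ in the applications.
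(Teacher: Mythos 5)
Your proposal follows essentially the same route as the paper's appendix proof: differentiate under the integral, use the gauge-covariant product rule so the $A_0$ terms cancel, identify $[D_t,D_j]=iF_{tj}$ with the momentum densities via the second and third equations of \eqref{sys1}, substitute the Schr\"odinger equation for $D_t\varphi$, and integrate by parts using the spatial commutator and the Gauss law $\partial_1A_2-\partial_2A_1=-\tfrac12|\varphi|^2$, after which the cubic gauge terms produced by the kinetic group cancel exactly against the magnetic source group $\mbox{Im}\int_{\R^2}\overline{\varphi}\left(D_1\varphi\,\partial_2\xi-D_2\varphi\,\partial_1\xi\right)|\varphi|^2\,dx$. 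The only differences are cosmetic (the paper never actually invokes the radial symmetry of $\xi$ in this cancellation, and it performs the computation formally, whereas your approximation-by-smooth-data step is an extra care the paper omits).
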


As an immediate consequence of Lemma \ref{virial}, we have the following.

\begin{lem} \label{virial1}
Assume $p>2$. Let $\varphi \in C([0, T); H^1(\R^2))$ be a solution to the Cauchy problem of the system \eqref{sys1} satisfying $\varphi(t) \in \Sigma$ for any $t \in [0, T)$. Define
$$
I(t):=\int_{\R^2}|x|^2|\varphi|^2 \, dx,
$$
then, for any $t\in [0, T)$,
\begin{align*}
\frac{d}{dt}I(t)&=4 \,\mbox{Im} \int_{\R^2} \overline{\varphi} \left(D_1 \varphi \, x_1 + D_2 \varphi \, x_2 \right) \, dx,\\
\frac{d^2}{dt^2}I(t)&=8Q(\varphi),
\end{align*}
where $Q$ is defined by \eqref{defq}.
\end{lem}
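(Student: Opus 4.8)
The plan is to apply Lemma \ref{virial} with the radially symmetric weight $\xi(x):=|x|^2$, which is smooth and radial so that it formally meets the hypotheses; the only genuine subtlety is that $\xi$ has unbounded first derivatives, and it is precisely the assumption $\varphi(t)\in\Sigma$ that renders $I(t)$ and $V_{\xi}[\varphi(t)]$ finite and the ensuing integrations by parts legitimate.

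First I would record the elementary identity $\frac{d}{dt}\int_{\R^2}\xi|\varphi|^2\,dx = 2V_{\xi}[\varphi]$, valid for any radial $\xi$ and underlying the computation behind Lemma \ref{virial}. Differentiating under the integral sign gives $\frac{d}{dt}\int_{\R^2}\xi|\varphi|^2\,dx = 2\int_{\R^2}\xi\,\mbox{Re}(\overline{\varphi}\,\partial_t\varphi)\,dx$. Solving the first equation of \eqref{sys1} for $\partial_t\varphi$ yields $\partial_t\varphi = -i\big(A_0\varphi - (D_1D_1+D_2D_2)\varphi - \lambda|\varphi|^{p-2}\varphi\big)$, and since $A_0|\varphi|^2$ and $\lambda|\varphi|^p$ are real they drop out, leaving $\mbox{Re}(\overline{\varphi}\,\partial_t\varphi) = -\mbox{Im}\big(\overline{\varphi}(D_1D_1+D_2D_2)\varphi\big)$. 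Using the covariant product rule $D_j(\xi\varphi) = (\partial_j\xi)\varphi + \xi D_j\varphi$ and the integration-by-parts formula $\int_{\R^2}\overline{\psi}D_j\phi\,dx = -\int_{\R^2}\overline{D_j\psi}\,\phi\,dx$, one gets $\mbox{Im}\int_{\R^2}\xi\overline{\varphi}D_jD_j\varphi\,dx = -\mbox{Im}\int_{\R^2}(\partial_j\xi)\overline{\varphi}D_j\varphi\,dx$ (the term $\xi|D_j\varphi|^2$ being real). Summing in $j$ gives $\frac{d}{dt}\int_{\R^2}\xi|\varphi|^2\,dx = 2\,\mbox{Im}\int_{\R^2}\overline{\varphi}\big(D_1\varphi\,\partial_1\xi + D_2\varphi\,\partial_2\xi\big)\,dx = 2V_{\xi}[\varphi]$, matching \eqref{vxi}.

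Taking $\xi=|x|^2$, so that $\partial_j\xi=2x_j$, the first formula follows at once: $\frac{d}{dt}I(t)=2V_{|x|^2}[\varphi]=4\,\mbox{Im}\int_{\R^2}\overline{\varphi}\big(D_1\varphi\,x_1+D_2\varphi\,x_2\big)\,dx$. For the second derivative I would differentiate again, writing $\frac{d^2}{dt^2}I(t)=2\frac{d}{dt}V_{|x|^2}[\varphi]$ and feeding $\xi=|x|^2$ into Lemma \ref{virial}. Here $\partial_1^2\xi=\partial_2^2\xi=2$, $\partial_1\partial_2\xi=0$, and every fourth-order derivative of $\xi$ vanishes, so the last (fourth-order) term disappears and the formula collapses to $\frac{d}{dt}V_{|x|^2}[\varphi]=4\int_{\R^2}\big(|D_1\varphi|^2+|D_2\varphi|^2\big)\,dx-\frac{4\lambda(p-2)}{p}\int_{\R^2}|\varphi|^p\,dx=4Q(\varphi)$, using the definition \eqref{defq}. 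Multiplying by $2$ gives $\frac{d^2}{dt^2}I(t)=8Q(\varphi)$.

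The main obstacle is entirely the admissibility of the unbounded weight $\xi=|x|^2$: the integration by parts and the differentiation under the integral above are transparent for $\xi$ with bounded derivatives but not for $|x|^2$. I would resolve this by a standard truncation, replacing $\xi$ with $\xi_R(x)=|x|^2\theta(x/R)$ for a fixed cut-off $\theta\in C^\infty_0(\R^2)$ equal to $1$ on the unit ball, applying the identities above and Lemma \ref{virial} to the compactly supported $\xi_R$, and then letting $R\to\infty$. The hypothesis $\varphi(t)\in\Sigma$ ensures $\varphi(t),\,x\varphi(t),\,D_1\varphi(t),\,D_2\varphi(t)\in L^2(\R^2)$, which forces all boundary and cut-off error terms (supported on annuli $R\le|x|\le 2R$ and weighted by the bounded quantities $\partial^2\xi_R,\partial^4\xi_R$) to vanish in the limit and legitimizes differentiating $I(t)$ under the integral sign, thereby completing the proof.
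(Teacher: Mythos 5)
Your proposal is correct and follows essentially the same route as the paper: the first derivative is computed directly from the equation via covariant integration by parts (exactly the paper's calculation), and the second derivative is obtained by feeding the quadratic weight into Lemma \ref{virial} (the paper takes $\xi=2|x|^2$, while you take $\xi=|x|^2$ and multiply by $2$, which is the same thing). The only difference is your truncation argument legitimizing the unbounded weight, a point the paper passes over silently by invoking Lemma \ref{virial} directly under the standing hypothesis $\varphi(t)\in\Sigma$; this is a welcome extra precaution rather than a different method.
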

\begin{proof}
Since $A_0$ is real-valued and $\varphi$ is a solution to the Cauchy problem of the system \eqref{sys1}, then
\begin{align*}
\frac{d}{dt}I(t)&=2 \, \mbox{Re} \int_{\R^2} |x|^2 \partial_t \varphi \, \overline{\varphi} \, dx
=2\, \mbox{Re} \int_{\R^2} |x|^2 D_t \varphi \, \overline{\varphi} \, dx \\
&=2 \,\mbox{Re} \int_{\R^2} i\, |x|^2 \left( \lambda |\varphi|^{p-2} \varphi + D_1D_1 \varphi + D_2D_2 \varphi\right) \overline{\varphi} \, dx \\
&=-2 \,\mbox{Im} \int_{\R^2} |x|^2 \left( \lambda |\varphi|^{p-2} \varphi + D_1D_1 \varphi + D_2D_2 \varphi\right) \overline{\varphi} \, dx \\
&=-2 \,\mbox{Im} \int_{\R^2} |x|^2 \left( D_1D_1 \varphi + D_2D_2 \varphi\right) \overline{\varphi} \, dx \\
&=-2 \,\mbox{Im} \int_{\R^2} \left( \left(\partial_1 + i \, A_1 \right) D_1 \varphi + \left(\partial_2 + i \, A_2 \right)D_2 \varphi\right) |x|^2 \overline{\varphi} \, dx \\
&= 2 \,\mbox{Im} \int_{\R^2} D_1 \varphi \,\overline{D_1 \left( |x|^2 \varphi \right)}+ D_2 \varphi \,\overline{D_2 \left( |x|^2 \varphi \right)} \, dx \\
&= 4 \,\mbox{Im} \int_{\R^2} \overline{\varphi} \left(D_1 \varphi \, x_1 + D_2 \varphi \, x_2 \right) \, dx.
\end{align*}
We next apply Lemma \ref{virial} by taking $\xi=2|x|^2$ to conclude that
$$
\frac{d^2}{dt^2}I(t)=8Q(\varphi).
$$
Thus we have completed the proof.
\end{proof}

\begin{rem}
By applying the method proposed in \cite{BBS}, one can derive that if $\varphi_0 \in \Sigma$, then the solution $\varphi \in C([0, T); H^1(\R^2)$ to the Cauchy problem of the system \eqref{sys1} with $\varphi(0)=\varphi_0$ satisfies that $\varphi(t) \in \Sigma$ for any $t \in [0, T)$.
\end{rem}

We now aim to discuss the evolution of a localized virial quantity with respect to a radially symmetric solution to the Cauchy problem of the system \eqref{sys1}. Let $\chi : \R^2 \to \R$ be a radially symmetric function with regularity property $\nabla^k \chi \in L^{\infty}(\R^2)$ for $1 \leq k \leq 4$ such that
\begin{equation*}
\chi(r):=\left\{
\begin{array}{lr}
\frac{r^2}{2}   \,\,\,&\text{for} \,\, r \leq 1,\\
\text{const.}  \,\,\,&\text{for} \, \, r \geq 10,
\end{array}
\quad \mbox{and} \,\,\, \varphi''(r) \leq 1 \,\, \mbox{for any} \,\,r \geq 0.
\right.
\end{equation*}
For $R>0$ given, we define a radially symmetric function $\chi_R: \R^2 \to \R$ by
\begin{align} \label{defchi}
\chi_R(r):=R^2 \chi\left(\frac{r}{R}\right).
\end{align}
It is simple to check that
\begin{align} \label{vpro}
1- \chi_R''(r) \geq 0, \,\,\, 1 -\frac{\chi_R'(r)}{r} \geq 0, \,\,\, 2- \Delta \chi_R(r) \geq 0 \,\, \, \mbox{for any} \,\, \, r \geq 0.
\end{align}
For a radially symmetric solution $\varphi \in C([0, T); H^1(\R^2))$ to the Cauchy problem of the system \eqref{sys1}, we now introduce a localized virial quantity
\begin{align} \label{vloc}
V_{\chi_R}[\varphi(t)]:=\mbox{Im} \int_{\R^2} \overline{\varphi} \left(D_1 \varphi \, \partial_1 \chi_R + D_2 \varphi \, \partial_2 \chi_R \right) \, dx.
\end{align}
As an easy consequence of the H\"older inequality, then
\begin{align} \label{vloc1}
\left|V_{\chi_R}[\varphi(t)]\right| \leq C \|\varphi(t)\|_2 \left(\|D_1 \varphi(t)\|_2+\|D_2\varphi(t)\|_2\right),
\end{align}
which reveals that $V_{\chi_R}[\varphi(t)]$ is well-defined for any $t \in [0, T)$. \medskip

In this direction, we have the following crucial lemma.

\begin{lem}\label{vlem}
Assume $p>2$. Let $\varphi \in C([0, T); H^1(\R^2))$ be a radially symmetric solution to the Cauchy problem of the system \eqref{sys1}, then, for any $t \in [0, T)$,
\begin{align*}
\frac{d}{dt}V_{\chi_R}[\varphi(t)]&\leq 2Q(\varphi) + C R^{-\frac{p-2}{2}} \left(\|D_1 \varphi\|^{\frac{p-2}{2}}_2 + \|D_2 \varphi\|^{\frac{p-2}{2}}_2\right) + C R^{-2} \\
& = 2(p-2)E(\varphi) -\left(p-4\right) \left(\|D_1 \varphi\|^2_2+\|D_2 \varphi\|_2^2\right) \\
& \quad + C R^{-\frac{p-2}{2}} \left(\|D_1 \varphi\|^{\frac{p-2}{2}}_2 + \|D_2 \varphi\|^{\frac{p-2}{2}}_2\right) + C R^{-2}.
\end{align*}
\end{lem}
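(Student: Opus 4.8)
The plan is to apply Lemma \ref{virial} with the radial choice $\xi = \chi_R$ and then compare the resulting identity with the exact virial computation corresponding to $\xi = |x|^2$, isolating $2Q(\varphi)$ as the main term and controlling the remainder. First I would record that, by the construction \eqref{defchi}, one has $\chi_R(x) = |x|^2/2$ on $\{|x| \le R\}$, so that $\partial_j \partial_k \chi_R = \delta_{jk}$, $\Delta \chi_R = 2$ and all fourth-order derivatives of $\chi_R$ vanish there; moreover $\chi_R$ is radial and $C^4$ with bounded derivatives, so Lemma \ref{virial} applies. Inserting $\xi = \chi_R$ and adding and subtracting the flat values $\partial_j\partial_k\chi_R = \delta_{jk}$ and $\Delta\chi_R = 2$, I would write
\begin{align*}
\frac{d}{dt}V_{\chi_R}[\varphi(t)] = 2Q(\varphi) + \mathrm{(I)} + \mathrm{(II)} + \mathrm{(III)},
\end{align*}
where (I) is the Hessian remainder $2\int_{\R^2}\sum_{j,k}(\partial_j\partial_k\chi_R - \delta_{jk})\,\mbox{Re}(\overline{D_j\varphi}\,D_k\varphi)\,dx$, (II) is the potential remainder $\frac{\lambda(p-2)}{p}\int_{\R^2}|\varphi|^p(2 - \Delta\chi_R)\,dx$, and (III) is the bi-Laplacian remainder $-\frac12\int_{\R^2}|\varphi|^2\Delta^2\chi_R\,dx$. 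Each of these is supported in $\{|x| \ge R\}$ by the observation above.

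The three remainders are then handled separately. For (I), I would use that $\chi_R$ is radial, so $\nabla^2\chi_R$ has eigenvalue $\chi_R''(|x|)$ in the radial direction and $\chi_R'(|x|)/|x|$ in the angular direction; hence $\nabla^2\chi_R - I$ is negative semidefinite by the first two inequalities in \eqref{vpro}. Writing the Hermitian form $\sum_{j,k}(\partial_j\partial_k\chi_R - \delta_{jk})\,\mbox{Re}(\overline{D_j\varphi}\,D_k\varphi)$ as a sum of two real quadratic forms, one in the real and one in the imaginary part of the covariant gradient $(D_1\varphi, D_2\varphi)$, shows that $\mathrm{(I)} \le 0$, so this term can simply be discarded in the upper bound. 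For (III), the scaling $\chi_R = R^2\chi(\cdot/R)$ gives $\|\Delta^2\chi_R\|_\infty \le CR^{-2}$, whence $|\mathrm{(III)}| \le CR^{-2}\|\varphi\|_2^2 = CR^{-2}c$ after using mass conservation from Lemma \ref{wellposed}.

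The essential term is (II). Since $2 - \Delta\chi_R$ is bounded and supported in $\{|x|\ge R\}$ by \eqref{vpro}, I would bound $\mathrm{(II)} \le C\int_{|x|\ge R}|\varphi|^p\,dx$ and then exploit radial symmetry. Applying the radial Strauss estimate $|\varphi(x)| \le C|x|^{-1/2}\|\varphi\|_2^{1/2}\|\nabla|\varphi|\,\|_2^{1/2}$ to the radial function $|\varphi|$, together with the diamagnetic inequality (Lemma \ref{diaineq}) in the form $\|\nabla|\varphi|\,\|_2 \le \|D_1\varphi\|_2 + \|D_2\varphi\|_2$ and with mass conservation, yields $\int_{|x|\ge R}|\varphi|^p\,dx \le CR^{-\frac{p-2}{2}}\bigl(\|D_1\varphi\|_2^{\frac{p-2}{2}} + \|D_2\varphi\|_2^{\frac{p-2}{2}}\bigr)$. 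This is exactly the stated error term, and combining it with the bounds on (I) and (III) gives the first inequality of the lemma.

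Finally, the second line is purely algebraic: setting $K := \|D_1\varphi\|_2^2 + \|D_2\varphi\|_2^2$ and using the definitions of $E$ and $Q$ one checks $2Q(\varphi) = 2(p-2)E(\varphi) - (p-4)K$, which I would substitute directly. The main obstacle I anticipate is term (II): obtaining the sharp power $R^{-(p-2)/2}$ relies on the radial decay estimate combined with the diamagnetic passage from $\nabla|\varphi|$ to the covariant derivatives, and some care is needed to make the pointwise negative-semidefiniteness argument for (I) fully rigorous for complex-valued $\varphi$, that is, to justify splitting the Hermitian form into real quadratic forms on the real and imaginary parts of the covariant gradient.
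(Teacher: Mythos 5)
Your proposal is correct and follows essentially the same route as the paper's proof: apply Lemma \ref{virial} with $\xi=\chi_R$, use the radial structure of the Hessian together with \eqref{vpro} to discard the (nonpositive) second-order remainder, bound the potential remainder on $\{|x|\ge R\}$ via the Strauss estimate combined with the diamagnetic inequality and mass conservation, control the fourth-order term by $CR^{-2}$, and finish with the algebraic identity $2Q(\varphi)=2(p-2)E(\varphi)-(p-4)\left(\|D_1\varphi\|_2^2+\|D_2\varphi\|_2^2\right)$. The only cosmetic difference is that the paper expresses the Hessian form as $\chi_R''|D^r\varphi|^2+\frac{\chi_R'}{r}|D^\tau\varphi|^2$ via radial/tangential projections of the covariant gradient, which is exactly the rigorous version of the negative-semidefiniteness argument you were worried about for complex-valued $\varphi$.
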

\begin{proof}
Applying Lemma \ref{virial} by taking $\xi=\chi_R$, we have that
\begin{align} \nonumber
\frac{d}{dt}V_{\chi_R}[\varphi(t)] &= 2\int_{\R^2}|D_1 \varphi|^2\, \partial_1^2 \chi_R
+2 \mbox{Re}\,\overline{D_1 \varphi} \, D_2 \varphi \, \partial_2\partial_1 \chi_R + |D_2 \varphi|^2\, \partial_2^2 \chi_R \,dx \\ \label{v0}
& \quad -\frac{\lambda(p-2)}{p}\int_{\R^2}|\varphi|^p \left(\partial_1^2 \chi_R + \partial_2^2 \chi_R \right)\, dx \\ \nonumber
& \quad -\frac 12 \int_{\R^2} |\varphi|^2 \left(\partial_1^4 \chi_R +  2 \partial_1^2 \partial_2^2 \chi_R + \partial_2^4 \chi_R \right)\, dx.
\end{align}
We now estimate three terms in the right hand side of \eqref{v0}. We begin with treating the first term. Recall that $\chi_R$ is a radially symmetric function, then it is straightforward to check that
$$
\partial_{j} \partial_l \chi_R=\left(\delta_{jl}-\frac{x_jx_l}{r^2}\right)\frac{\chi'_R}{r} + \frac{x_jx_l}{r^2} \chi''_R,
$$
where $\delta_{jl}=1$ if $j=l$, $\delta_{jl}=0$ if $j\neq l$, and $j,l=1, 2.$ Thus we can deduce that
\begin{align} \label{destimate}
\begin{split}
&|D_1 \varphi|^2\, \partial_1^2 \chi_R
+2 \mbox{Re}\,\overline{D_1 \varphi} \, D_2 \varphi \, \partial_2\partial_1 \chi_R + |D_2 \varphi|^2\, \partial_2^2 \chi_R\\
&=\chi''_R |D^r \varphi|^2 + \frac{\chi'_R}{r}|D^{\tau} \varphi|^2,
\end{split}
\end{align}
where we define $D \varphi:=\left(D_1 \varphi, D_2 \varphi \right)$, and $D^{\tau}$ is the projection of the operator $D$ on the tangent plane to the sphere such that
$$
|D^r \varphi|^2+|D^{\tau} \varphi|^2=|D \varphi|^2=|D_1 \varphi|^2+|D_2 \varphi|^2,
$$
and
$$
D^r \varphi:=\left(D \varphi \cdot \frac{x}{|x|}\right)\frac{x}{|x|}, \quad D^{\tau}\varphi \cdot D^r \varphi=0.
$$
In light of \eqref{vpro} and \eqref{destimate}, we then derive that
\begin{align} \label{vd}
\begin{split}
& \int_{\R^2}|D_1 \varphi|^2\, \partial_1^2 \chi_R
+2 \mbox{Re}\,\overline{D_1 \varphi} \, D_2 \varphi \, \partial_2\partial_1 \chi_R + |D_2 \varphi|^2\, \partial_2^2 \chi_R \, dx\\
&=\int_{\R^2} |D_1 \varphi|^2+|D_2 \varphi|^2 + \left(\chi''_R-1\right) |D^r \varphi|^2 + \left(\frac{\chi'_R}{r}-1\right)|D^{\tau} \varphi|^2 \, dx \\
&\leq \int_{\R^2} |D_1 \varphi|^2+|D_2 \varphi|^2 \, dx.
\end{split}
\end{align}
We next deal with the second term. Since $\varphi$ is radially symmetric, by using the Strauss inequality, then
\begin{align}\label{radineq}
|\varphi| \leq C |x|^{-\frac 12} \| \varphi \|_2^{\frac 12} \|\nabla |\varphi| \|_2^{\frac 12} \quad \mbox{for any} \,\,\,|x| \geq R.
\end{align}
According to Lemma \ref{diaineq}, we know that
\begin{align}\label{magineq}
|\nabla |\varphi|| \leq |\left(\partial_1 + i A_1, \partial_2 + iA_2\right) \varphi| = |D \varphi| \leq  |D_1 \varphi| + |D_2 \varphi|.
\end{align}
Combining \eqref{radineq} and \eqref{magineq}, we then obtain that
\begin{align*}
\int_{\{x: |x| \geq R\}} |\varphi|^p \, dx &\leq  C R^{-\frac{p-2}{2}} \|\varphi\|_2^{\frac{p+2}{2}}\|\nabla |\varphi|\|^{\frac{p-2}{2}}_2 \\
&\leq  CR^{-\frac{p-2}{2}} \|\varphi\|_2^{\frac{p+2}{2}} \left(\|D_1 \varphi\|^{\frac{p-2}{2}}_2 + \|D_2 \varphi\|^{\frac{p-2}{2}}_2\right).
\end{align*}
This asserts that
\begin{align} \label{error}
\begin{split}
\int_{\R^2} |\varphi|^p \Delta \chi_R \, dx &= 2 \int_{\R^2} |\varphi|^p \, dx + \int_{\R^2} |\varphi|^p\left(\Delta \chi_R-2\right) \, dx \\
&=2 \int_{\R^2} |\varphi|^p \, dx + \int_{\{x: |x| \geq R\}}  |\varphi|^p\left(\Delta \chi_R-2\right) \, dx \\
& \leq 2 \int_{\R^2} |\varphi|^p \, dx + C \int_{\{x: |x| \geq R\}}  |\varphi|^p \, dx \\
&\leq  2 \int_{\R^2} |\varphi|^p \, dx + CR^{-\frac{p-2}{2}} \left(\|D_1 \varphi\|^{\frac{p-2}{2}}_2 + \|D_2 \varphi\|^{\frac{p-2}{2}}_2\right),
\end{split}
\end{align}
where the second identity is insured by the property that $\Delta \chi_R - 2=0$ for any $|x| \leq R$.
We now estimate the last term. By the definition of $\chi_R$, see \eqref{defchi}, there holds that
\begin{align*}
\|\partial_1^4 \chi_R +  2 \partial_1^2 \partial_2^2 \chi_R + \partial_2^4 \chi_R\|_{\infty} \leq C R^{-2}.
\end{align*}
Accordingly, from the conservation of mass, we obtain the following estimate to the last term,
\begin{align}\label{last}
\int_{\R^2} |\varphi|^2 \left(\partial_1^4 \chi_R +  2 \partial_1^2 \partial_2^2 \chi_R + \partial_2^4 \chi_R \right)\, dx \leq C R^{-2}.
\end{align}
By using \eqref{vd}, \eqref{error} and \eqref{last}, it then follows from \eqref{v0} that
\begin{align*}
\frac{d}{dt}V_{\chi_R}[\varphi(t)]& \leq  2 \left(\|D_1 \varphi \|_2^2 + \|D_2 \varphi \|_2^2\right) -\frac{2\lambda (p-2)}{p} \int_{\R^2}|\varphi|^p\,dx \\
& \quad + C R^{-\frac{p-2}{2}} \left(\|D_1 \varphi\|^{\frac{p-2}{2}}_2 + \|D_2 \varphi\|^{\frac{p-2}{2}}_2\right) + C R^{-2} \\
&=2Q(\varphi) + C R^{-\frac{p-2}{2}} \left(\|D_1 \varphi\|^{\frac{p-2}{2}}_2 + \|D_2 \varphi\|^{\frac{p-2}{2}}_2\right) + C R^{-2} \\
&= 2(p-2)E(\varphi) -\left(p-4\right) \left(\| D_1 \varphi\|^2_2+\|D_2 \varphi\|_2^2\right) \\
& \quad + C R^{-\frac{p-2}{2}} \left(\|D_1 \varphi\|^{\frac{p-2}{2}}_2 + \|D_2 \varphi\|^{\frac{p-2}{2}}_2\right) + C R^{-2},
\end{align*}
and the proof is completed.
\end{proof}

Relying on the previous Lemmas \ref{virial1}-\ref{vlem}, we are now able to prove the instability of ground states to \eqref{system1}-\eqref{mass} in the mass supercritical case.

\begin{proof}[Proof of Theorem \ref{unstable}]
Let us first define $\varphi_{0, \tau}:=\tau u_c(\tau x)$ and
$$
\Theta:=\left\{v \in H^1(\R^2) : E(v) <E(u_c), \,\,\, \|v\|_2=\|u_c\|_2, \,\,\,Q(v) <0\right\}.
$$
By Lemma \ref{unique}, one can derive that $\varphi_{0, \tau} \in \Theta$ for any $\tau$ close enough to $1$ from above. In addition, there holds that $\|\varphi_{0, \tau}-u_c\| \to 0$ as $\tau \to 1^+$. For any $\eps>0$. we now fix a constant $\tau >1$ close enough to $1$ such that $\|\varphi_{0, \tau}-u_c\| \leq \eps$, and let $\varphi \in C([0, T); H^1(\R^2))$ be the solution to the Cauchy problem of the system \eqref{sys1} with $\varphi(0)= \varphi_{0, \tau}$, then $\varphi(t) \in \Theta$ for any $t \in [0, T)$. Indeed, recall that $\varphi_{0, \tau} \in \Theta$, by the conservation laws, hence, for any $t \in [0, T)$,
\begin{align} \label{laws}
\|\varphi(t)\|_2=\|u_c\|_2, \quad E(\varphi(t)) < E(u_c).
\end{align}
It remains to show that $Q(\varphi(t))<0$ for any $t \in [0, T)$. If there were a constant $t_1\in (0, T)$ such that $Q(\varphi(t_1)) \geq 0$, it then implies that there is a constant $t_2 \in (0, t_1]$ such that $Q(\varphi(t_2))=0$, thus $E(\varphi(t_2)) \geq \gamma(c)=E(u_c)$, this contradicts \eqref{laws}. Thus the assertion follows.

For simplicity of notation, we shall write $\varphi:=\varphi(t)$. Since $Q(\varphi)<0$ by the previous discussion, from Lemma \ref{unique}, then there is a constant $0<\tau^* <1$ such that $Q(\varphi_{\tau^*})=0$. Furthermore, the function $\tau \mapsto E(\varphi_{\tau})$ is concave on $[\tau^*, 1]$. Thus
\begin{align} \label{QE}
E(\varphi_{\tau^*}) -E(\varphi) \leq \left( \tau^*-1\right) \frac{\partial E(\varphi_{\tau})}{\partial \tau}{\mid_{\tau=1}} = \left( \tau^*-1\right) Q(\varphi).
\end{align}
Note that $Q(\varphi) <0$, $E(\varphi)=E(\varphi_{0, \tau})$ and $\varphi_{\tau^*} \in \mathcal{M}(c)$, it then yields from \eqref{QE} that
$$
Q(\varphi) \leq \left(1-\tau^*\right) Q(\varphi) \leq E(\varphi)-E(\varphi_{\tau^*})  \leq  E(\varphi_{0,\tau})-E(u_c),
$$
this in turn gives that
\begin{align} \label{vQ}
Q(\varphi) \leq -\beta,
\end{align}
where $\beta:= E(u_c)-E(\varphi_{0, \tau}) >0$.

At this point, in order to prove that $u_c$ is strongly unstable, it suffices to derive that the solution $\varphi$ blows up in finite time. Firstly, we assume that $u_c \in \Sigma$. Then $\varphi_{0, \tau} \in \Sigma$, and we make use of Lemma \ref{virial1} and \eqref{vQ} to obtain that
$$
\frac{d^2}{dt^2}\int_{\R^2}|x|^2|\varphi|^2 \, dx =8Q(\varphi)<-8\beta,
$$
from which we conclude that $\varphi$ has to blow up in finite time. Secondly, we assume that $u_c$ is radially symmetric and $p \leq 6$. In this situation, suppose by contradiction that the solution $\varphi$ exists globally in time. We now claim that there exist a constant $\delta>0$ such that
\begin{align} \label{evo}
\frac{d}{dt}V_{\chi_R}[\varphi(t)] \leq -\delta \left(\|D_1 \varphi(t)\|_2^2 + \|D_2 \varphi(t)\|_2^2 \right)
\,\,\, \mbox{for any} \,\,\, t \in [0, \infty),
\end{align}
and a constant $t_0>0$ such that
\begin{align} \label{evo1}
V_{\chi_R}[\varphi(t)] <0 \quad \mbox{for any} \,\,\, t \geq t_0.
\end{align}
To prove this, we consider the following two distinguishing cases. To begin with, let us note the fact that $E(\varphi_{0,\tau})>0$ for $\tau>1$ close enough to $1$.\\
{\bf Case 1:} Let
$$
T_1:=\left\{t\in [0, \infty): \|D_1 \varphi(t)\|_2^2 + \|D_1 \varphi(t)\|_2^2  \leq \frac{4(p-2)}{(p-4)} E(\varphi_{0, \tau})\right\}.
$$
In view of Lemma \ref{vlem} and \eqref{vQ}, by taking $R>0$ large enough, we have that, for any $t \in T_1$,
$$
\frac{d}{dt}V_{\chi_R}[\varphi(t)] \leq Q(\varphi(t)) \leq -\beta \leq - \delta \left(\|D_1 \varphi(t)\|_2^2 + \|D_1 \varphi(t)\|_2^2 \right)
$$
for some $\delta >0$ small enough. \\
{\bf Case 2:} Let
$$
T_2:=[0, \infty)\backslash T_1=\left\{t\in [0, \infty): \|D_1 \varphi(t)\|_2^2 + \|D_1 \varphi(t)\|_2^2 > \frac{4(p-2)}{(p-4)} E(\varphi_{0, \tau})\right\}.
$$
By the conservation of energy and Lemma \ref{vlem}, we obtain that, for any $t \in T_2$,
\begin{align*}
\frac{d}{dt}V_{\chi_R}[\varphi(t)] &\leq -\frac{p-4}{2} \left(\|D_1 \varphi(t)\|^2_2+\|D_2 \varphi(t)\|_2^2\right) \\
& \quad + C R^{-\frac{p-2}{2}} \left(\|D_1 \varphi(t)\|^{\frac{p-2}{2}}_2 + \|D_2 \varphi(t)\|^{\frac{p-2}{2}}_2\right) + C R^{-2}.
\end{align*}
Since $p \leq 6$, by taking $R>0$ large enough, then, for any $t \in T_2$,
$$
\frac{d}{dt}V_{\chi_R}[\varphi(t)] \leq -\frac{p-4}{4} \left(\|D_1 \varphi(t)\|^2_2+\|D_2 \varphi(t)\|_2^2\right).
$$
From the arguments above, the claim then follows.

We now integrate \eqref{evo} on $[t_0, t]$ and use \eqref{evo1} to conclude that
\begin{align} \label{VV1}
V_{\chi_R}[\varphi(t)] \leq  - \delta \int_{t_0}^t \left(\|D_1 \varphi(t)\|_2^2 + \|D_2 \varphi(t)\|_2^2 \right) \, dx.
\end{align}
In addition, from \eqref{vloc1} and the conservation of mass, we know that
\begin{align} \label{VV2}
|V_{\chi_R}[\varphi(t)]| \leq C \left(\|D_1 \varphi(t)\|_2 + \|D_2 \varphi(t)\|_2\right).
\end{align}
Consequently, \eqref{VV1} and \eqref{VV2} leads to
\begin{align}\label{vineq}
V_{\chi_R}[\varphi(t)] \leq  - \mu \int_{t_0}^t |V_{\chi_R}[\varphi(s)]|^2 \, ds
\end{align}
for some constant $\mu>0$. Setting
$$
z(t):=\int_{t_0}^t |V_{\chi_R}[\varphi(s)]|^2 \, ds,
$$
we then get from \eqref{vineq} that $z'(t) \geq \mu^2 z(t)^2$. By integrating this differential inequality, we have that $V_{\chi_R}[\varphi(t)] \to -\infty$ as $t$ goes to some constant $t^*>0$. This then contradicts our assumption that $\varphi$ exists globally in time. Therefore, we conclude that the solution $\varphi$ blows up in finite time, and the proof is completed.
\end{proof}

\section{Appendix}

This last section is devoted to the proof of Lemma \ref{virial}.

\begin{proof} [Proof of Lemma \ref{virial}]
Noting first the definition of the virial type quantity $V_{\xi}[\varphi(t)]$, see \eqref{vxi}, we deduce that
\begin{align*}
\frac{d}{dt}V_{\xi}[\varphi(t)]&= \mbox{Im} \int_{\R^2} \partial_t \overline{\varphi} \left(D_1 \varphi \, \partial_1 \xi + D_2 \varphi \, \partial_2 \xi \right) + \overline{\varphi} \left(\partial_t D_1 \varphi \, \partial_1 \xi + \partial_t D_2 \varphi \, \partial_2 \xi \right) \, dx \\
&= \mbox{Im} \int_{\R^2} \left(\partial_t \overline{\varphi} \, D_1 \varphi + \overline{\varphi} \, \partial_t D_1 \varphi \right) \partial_1 \xi  + \left(\partial_t \overline{\varphi} \, D_2 \varphi + \overline{\varphi} \, \partial_t D_2 \varphi \right) \partial_2 \xi \, dx.
\end{align*}
Since $A_0$ is real-valued, then
\begin{align}\label{v1}
\partial_t \overline{\varphi} \, D_j \varphi + \overline{\varphi} \, \partial_0 D_j \varphi=\overline{D_t \varphi} \, D_j \varphi + \overline{\varphi} \, D_t D_j \varphi \,\,\, \text{for} \, \, j=1,2.
\end{align}
Moreover, by the definitions of $D_{t}$, $D_j$ and the fact that $\varphi$ satisfies the system \eqref{sys1}, there holds that
\begin{align} \label{v2}
\begin{split}
D_tD_1 \varphi&=D_1D_t \varphi-i \,\mbox{Im} \left(\overline{\varphi} \,D_2 \varphi \right) \varphi,\\
D_tD_2 \varphi&=D_2D_t \varphi+i \, \mbox{Im} \left(\overline{\varphi} \,D_1 \varphi \right) \varphi.
\end{split}
\end{align}
Thus it follows from \eqref{v1} and \eqref{v2} that
\begin{align} \label{dvt}
\begin{split}
\frac{d}{dt}V_{\chi_R}[\varphi(t)]&= \mbox{Im} \int_{\R^2} \left(\overline{D_t\varphi} \, D_1 \varphi + \overline{\varphi} \, D_1D_t \varphi \right) \partial_1 \xi + \left(\overline{D_t\varphi} \, D_2 \varphi + \overline{\varphi} \,D_2D_t \varphi \right) \partial_2 \xi \, dx \\
& \quad + \mbox{Im} \int_{\R^2} \overline{\varphi} \left(\,D_1 \varphi  \,\partial_2 \xi  - D_2 \varphi \, \partial_1 \xi \right) |\varphi|^2\, dx \\
&:=I_1+I_2+\mbox{Im} \int_{\R^2} \overline{\varphi} \left(\,D_1 \varphi  \,\partial_2 \xi  - D_2 \varphi \, \partial_1 \xi \right) |\varphi|^2\, dx,
\end{split}
\end{align}
where we defined that
\begin{align*}
I_1:=\mbox{Im} \int_{\R^2} \left(\overline{D_ t\varphi} \, D_1 \varphi + \overline{\varphi} \, D_1D_t \varphi \right) \partial_1 \xi \,dx, \quad
I_2:=\mbox{Im} \int_{\R^2} \left(\overline{D_t\varphi} \, D_2 \varphi + \overline{\varphi} \,D_2D_t \varphi \right) \partial_2 \xi \, dx.
\end{align*}

In the following, we shall compute $I_1$, and $I_2$ can handled by a similar way. To do this, we decompose $I_1$ into two parts $I_{1,1}$ and $I_{1, 2}$, where
$$
I_{1,1}:=\mbox{Im} \int_{\R^2} \overline{D_t\varphi} \, D_1 \varphi \, \partial_1 \xi \, dx,
\quad I_{1,2}:=\mbox{Im} \int_{\R^2}\overline{\varphi} \, D_1D_t \varphi \, \partial_1 \xi \, dx.
$$
Firstly, let us deal with $I_{1,1}$. Recall that $\varphi$ satisfies the system \eqref{sys1}, then
\begin{align*}
I_{1, 1}&=-\mbox{Im} \int_{\R^2} i \left( \lambda |\varphi|^{p-2} \overline{\varphi} + \overline{D_1D_1 \varphi} + \overline{D_2D_2 \varphi} \right) D_1 \varphi \,\partial_1 \xi \, dx  \\
&=-\mbox{Re} \int_{\R^2} \left( \lambda |\varphi|^{p-2} \overline{\varphi} + \overline{D_1D_1 \varphi} + \overline{D_2D_2 \varphi} \right) D_1 \varphi \,\partial_1 \xi \, dx.
\end{align*}
By the definitions of $D_j$ and the fact that $A_j$ are real-valued for $j=1, 2$, we have that
\begin{align} \label{i111}
\begin{split}
&-\mbox{Re} \int_{\R^2} \lambda |\varphi|^{p-2} \overline{\varphi}\, D_1 \varphi \, \partial_1 \xi \, dx
=-\mbox{Re} \int_{\R^2} \lambda |\varphi|^{p-2} \overline{\varphi} \, \left(\partial_1+iA_1\right) \varphi \, \partial_1 \xi \, dx\\
&=-\mbox{Re} \int_{\R^2} \lambda |\varphi|^{p-2} \overline{\varphi} \,\partial_1 \varphi \, \partial_1 \xi \, dx
=-\frac {\lambda}{2} \int_{\R^2} |\varphi|^{p-2} \,  \partial_1 \left(|\varphi|^2\right) \partial_1 \xi \, dx\\
&=-\frac{\lambda}{p}  \int_{\R^2} \partial_1 \left(|\varphi|^2\right)^{\frac p2} \partial_1 \xi \, dx
=\frac{\lambda}{p}  \int_{\R^2} |\varphi|^p \,\partial_1^2 \xi \, dx,
\end{split}
\end{align}
as well as
\begin{align} \label{i112}
\begin{split}
-\mbox{Re} \int_{\R^2} \overline{D_1D_1 \varphi} \,D_1 \varphi \,\partial_1 \xi \, dx
&= -\mbox{Re} \int_{\R^2} \left(\partial_1 -iA_1\right)\overline{D_1 \varphi} \, D_1 \varphi \, \partial_1 \xi \, dx \\
&=-\frac 12 \int_{\R^2} \partial_1\left(|D_1 \varphi|^2\right) \partial_1 \xi \, dx \\
&=\frac 12 \int_{\R^2}|D_1 \varphi|^2 \partial_1^2 \xi\,dx.
\end{split}
\end{align}
Furthermore,
\begin{align} \label{i113}
\begin{split}
&-\mbox{Re} \int_{\R^2} \overline{D_2D_2 \varphi} \, D_1 \varphi \, \partial_1 \xi \, dx
=-\mbox{Re} \int_{\R^2}\left(\partial_2-iA_2\right) \overline{D_2 \varphi} \, D_1 \varphi \, \partial_1 \xi \, dx\\
&=-\mbox{Re} \int_{\R^2}  \partial_2\overline{D_2 \varphi} \, D_1 \varphi \, \partial_1 \xi \, dx + \mbox{Re} \int_{\R^2}i A_2\overline{D_2 \varphi} \, D_1 \varphi \, \partial_1 \xi \, dx\\
&= \mbox{Re}\int_{\R^2} \overline{D_2 \varphi} \, D_2D_1 \varphi \, \partial_1 \xi \, dx + \mbox{Re} \int_{\R^2}\overline{D_2 \varphi} \, D_1 \varphi \, \partial_2\partial_1 \xi \, dx.
\end{split}
\end{align}
Observe that
$$
D_2D_1 \varphi=D_1D_2 \varphi + i \left(\partial_2 A_1-\partial_1 A_2\right) \varphi = D_1D_2 \varphi + \frac i2 |\varphi|^2 \varphi.
$$
Thus
\begin{align} \label{identity1}
\begin{split}
&\mbox{Re}\int_{\R^2} \overline{D_2 \varphi} \, D_2D_1 \varphi \, \partial_1 \xi \, dx
=\mbox{Re}\int_{\R^2} \overline{D_2 \varphi} \, D_1D_2 \varphi \, \partial_1 \xi \, dx - \frac 12\,\mbox{Im} \int_{\R^2} \overline{D_2 \varphi} \, |\varphi|^2 \varphi \, \partial_1 \xi \, dx \\
&=\mbox{Re}\int_{\R^2} \overline{D_2 \varphi} \, \left(\partial_1 + i A_1\right) D_2 \varphi \, \partial_1 \xi \, dx - \frac 12\,\mbox{Im} \int_{\R^2} \overline{D_2 \varphi} \, |\varphi|^2 \varphi \, \partial_1 \xi \, dx \\
&=\mbox{Re}\int_{\R^2} \overline{D_2 \varphi} \, \partial_1 D_2 \varphi \, \partial_1 \xi \, dx - \frac 12\,\mbox{Im} \int_{\R^2} \overline{D_2 \varphi} \, |\varphi|^2 \varphi \, \partial_1 \xi \, dx \\
&=-\frac 12 \int_{\R^2} |D_2 \varphi|^2 \, \partial_1^2 \xi \, dx - \frac 12\,\mbox{Im} \int_{\R^2} \overline{D_2 \varphi} \, |\varphi|^2 \varphi \, \partial_1 \xi \, dx.
\end{split}
\end{align}
By inserting the identity \eqref{identity1} into \eqref{i113}, then
\begin{align}  \nonumber
-\mbox{Re} \int_{\R^2} \overline{D_2D_2 \varphi} \, D_1 \varphi \, \partial_1 \xi \, dx
&=-\frac 12 \int_{\R^2} |D_2 \varphi|^2 \, \partial_1^2 \xi \, dx + \mbox{Re} \int_{\R^2}\overline{D_2 \varphi} \, D_1 \varphi \, \partial_2\partial_1 \xi \, dx \\ \label{i114}
&\quad - \frac 12\, \mbox{Im} \int_{\R^2} \overline{D_2 \varphi} \, |\varphi|^2 \varphi \, \partial_1 \xi \, dx.
\end{align}
Therefore, from \eqref{i111}, \eqref{i112} and \eqref{i114},
\begin{align} \label{i11}
\begin{split}
I_{1,1}&=\frac 12 \int_{\R^2} \left(|D_1 \varphi|^2-|D_2 \varphi|^2 \right) \, \partial_1^2 \xi \, dx + \mbox{Re} \int_{\R^2}\overline{D_2 \varphi} \, D_1 \varphi \, \partial_2\partial_1 \xi \, dx \\
&\quad +\frac{\lambda}{p}  \int_{\R^2} |u|^p \,\partial_1^2 \xi \, dx - \frac 12\, \mbox{Im} \int_{\R^2} \overline{D_2 \varphi} \, |\varphi|^2 \varphi \, \partial_1 \xi \, dx.
\end{split}
\end{align}
Secondly, we shall treat $I_{1,2}$. Notice that $\varphi$ satisfies the system \eqref{sys1}, then
\begin{align*}
I_{1,2}&=\mbox{Im} \int_{\R^2} i\, \overline{\varphi} \,D_1 \left(\lambda |\varphi|^{p-2}\varphi + D_1D_1 \varphi + D_2D_2\varphi\right) \partial_1 \xi \, dx\\
&=\mbox{Re}\int_{\R^2} \overline{\varphi} \,D_1\left(\lambda |\varphi|^{p-2}\varphi + D_1D_1 \varphi + D_2D_2\varphi\right) \partial_1 \xi \, dx.
\end{align*}
Applying again the definitions of $D_j$ and the fact that $A_j$ are real-valued for $j=1, 2$, we get that
\begin{align}  \label{i121}
\begin{split}
&\mbox{Re}\int_{\R^2} \lambda \,\overline{\varphi} \,D_1 \left(|\varphi|^{p-2}\varphi\right) \partial_1 \xi \, dx
= \mbox{Re} \int_{\R^2} \lambda \,\overline{\varphi} \, \partial_1 \left(|\varphi|^{p-2}\varphi \right) \partial_1 \xi \, dx \\
&= -\mbox{Re} \int_{\R^2} \lambda \,\partial_1 \overline{\varphi} \,|\varphi|^{p-2} \varphi \, \partial_1 \xi \, dx
-\lambda\int_{\R^2} |\varphi|^p \partial_1^2 \xi \, dx \\
&= -\frac {\lambda}{2} \int_{\R^2} \partial_1\left(|\varphi|^2\right) |\varphi|^{p-2} \, \partial_1 \xi\, dx
-\lambda\int_{\R^2} |\varphi|^p \,\partial_1^2 \xi \, dx \\
&=\lambda\left(\frac 1p -1 \right) \int_{\R^2}|\varphi|^p \, \partial_1^2 \xi\, dx,
\end{split}
\end{align}
and
\begin{align} \label{identity2}
\begin{split}
&\mbox{Re}\int_{\R^2} \overline{\varphi} \,D_1D_1D_1 \varphi \, \partial_1 \xi \, dx
=\mbox{Re}\int_{\R^2} \overline{\varphi} \, \partial_1 D_1D_1 \varphi \, \partial_1 \xi \, dx
+\mbox{Re}\int_{\R^2} \overline{\varphi} \, iA_1 D_1D_1 \varphi \, \partial_1 \xi \, dx  \\
&=-\mbox{Re}\int_{\R^2} \partial_1 \overline{\varphi} \, D_1D_1 \varphi \, \partial_1 \xi \, dx
+\mbox{Re}\int_{\R^2} iA_1 \overline{\varphi} \,D_1D_1 \varphi \, \partial_1 \xi \, dx
-\mbox{Re}\int_{\R^2}\overline{\varphi} \, D_1D_1 \varphi \, \partial_1^2 \xi \, dx \\
&=-\mbox{Re}\int_{\R^2} \overline{D_1\varphi} \, D_1D_1 \varphi \, \partial_1 \xi \, dx
-\mbox{Re}\int_{\R^2}\overline{\varphi} \, D_1D_1 \varphi \, \partial_1^2 \xi \, dx \\
&=-\mbox{Re}\int_{\R^2} D_1\varphi \, \overline{D_1D_1 \varphi} \, \partial_1 \xi \, dx
-\mbox{Re}\int_{\R^2}\overline{\varphi} \, D_1D_1 \varphi \, \partial_1^2 \xi \, dx \\
&=-\mbox{Re}\int_{\R^2} D_1 {\varphi} \, \overline{D_1D_1 \varphi} \, \partial_1 \xi \, dx
-\mbox{Re}\int_{\R^2}\overline{\varphi} \, \partial_1 D_1 \varphi \, \partial_1^2 \xi \, dx -\mbox{Re}\int_{\R^2}iA_1\,\overline{\varphi} \, D_1 \varphi \, \partial_1^2 \xi \, dx \\
&=-\mbox{Re}\int_{\R^2} D_1 {\varphi} \, \overline{D_1D_1 \varphi} \, \partial_1 \xi \, dx
+ \int_{\R^2}|D_1 \varphi|^2 \partial_1^2 \xi \, dx
+ \mbox{Re}\int_{\R^2} \overline{\varphi} \, D_1 \varphi \, \partial_1^3 \xi \, dx \\
&=-\mbox{Re}\int_{\R^2} D_1 {\varphi} \, \overline{D_1D_1 \varphi} \, \partial_1 \xi \, dx
+ \int_{\R^2}|D_1 \varphi|^2 \partial_1^2 \xi \, dx
-\frac 12 \int_{\R^2} |\varphi|^2 \partial_1^4 \xi \, dx.
\end{split}
\end{align}
Plugging \eqref{i112} into the identity \eqref{identity2}, we then obtain that
\begin{align} \label{i122}
\mbox{Re}\int_{\R^2} \overline{\varphi} \,D_1D_1D_1 \varphi \, \partial_1 \xi \, dx
=\frac 32 \int_{\R^2}|D_1 \varphi|^2 \partial_1^2 \xi \, dx
-\frac 12 \int_{\R^2} |\varphi|^2 \partial_1^4 \xi \, dx.
\end{align}
In addition, observe that
\begin{align} \label{identity3}
\begin{split}
&\mbox{Re}\int_{\R^2} \overline{\varphi} D_1 D_2D_2 \varphi \, \partial_1 \xi \, dx
=\mbox{Re}\int_{\R^2} \overline{\varphi} \, \partial_1 D_2D_2 \varphi \, \partial_1 \xi \, dx
+\mbox{Re}\int_{\R^2}  iA_1 \,\overline{\varphi} \,D_2D_2 \varphi \, \partial_1 \xi \, dx  \\
&=- \mbox{Re}\int_{\R^2} \overline{ D_1 \varphi} \, D_2D_2 \varphi \, \partial_1 \xi \, dx
-\mbox{Re}\int_{\R^2} \overline{\varphi} \, D_2D_2 \varphi \, \partial_1^2 \xi \, dx  \\
&=- \mbox{Re}\int_{\R^2} D_1 \varphi \, \overline{D_2D_2 \varphi} \, \partial_1 \xi \, dx
-\mbox{Re}\int_{\R^2} \overline{\varphi} \, D_2D_2 \varphi \, \partial_1^2 \xi \, dx  \\
&=- \mbox{Re}\int_{\R^2} D_1 \varphi \, \overline{ D_2D_2 \varphi} \, \partial_1 \xi \, dx
+\int_{\R^2} |D_2 \varphi|^2 \, \partial_1^2 \xi \, dx
+ \mbox{Re} \int_{\R^2} \overline{\varphi}\,D_2 \varphi \, \partial_2 \partial_1^2 \xi \, dx \\
&=- \mbox{Re}\int_{\R^2} D_1 \varphi \, \overline{D_2D_2 \varphi} \, \partial_1 \xi \, dx
+\int_{\R^2} |D_2 \varphi|^2 \, \partial_1^2 \xi \, dx
-\frac 12 \int_{\R^2}|\varphi|^2 \, \partial_2^2 \partial_1^2 \xi\, dx.
\end{split}
\end{align}
By inserting \eqref{i114} into the identity \eqref{identity3}, then
\begin{align*}
\mbox{Re}\int_{\R^2} \overline{\varphi} D_1 D_2D_2 \varphi \, \partial_1 \xi\, dx
&=\frac 12 \int_{\R^2} |D_2 \varphi|^2 \, \partial_1^2 \xi \, dx + \mbox{Re} \int_{\R^2}\overline{D_2 \varphi} \, D_1 \varphi \, \partial_2\partial_1 \xi \, dx \\
&\quad -\frac 12 \int_{\R^2}|\varphi|^2 \, \partial_2^2 \partial_1^2 \xi \, dx - \frac 12\,\mbox{Im} \int_{\R^2} \overline{D_2 \varphi} \, |\varphi|^2 \varphi \, \partial_1 \xi \, dx.
\end{align*}
This along with \eqref{i121} and \eqref{i122} implies that
\begin{align} \label{i12}
\begin{split}
I_{1,2}&=\frac 12 \int_{\R^2}\left(3|D_1 \varphi|^2 + |D_2 \varphi|^2 \right) \, \partial_1^2 \xi \, dx
+\mbox{Re} \int_{\R^2}\overline{D_2 \varphi} \, D_1 \varphi \, \partial_2\partial_1 \xi \, dx \\
&\quad +\lambda\left(\frac 1p -1 \right) \int_{\R^2}|\varphi|^p \, \partial_1^2 \xi\, dx-\frac 12 \int_{\R^2} |\varphi|^2 \left(\partial_1^4 \xi +  \partial_2^2 \partial_1^2 \xi \right)\, dx\\
&\quad - \frac 12\,\mbox{Im} \int_{\R^2} \overline{D_2 \varphi} \, |\varphi|^2 \varphi \, \partial_1 \xi \, dx.
\end{split}
\end{align}
Recalling that $I_1=I_{1,1}+I_{1,2}$, by means of \eqref{i11} and \eqref{i12}, we then arrive at
\begin{align*}
I_1&=2\int_{\R^2}|D_1 \varphi|^2\, \partial_1^2 \xi \, dx
+2 \,\mbox{Re} \int_{\R^2}\overline{D_2 \varphi} \, D_1 \varphi \, \partial_2\partial_1 \xi \, dx-\frac{(p-2)\lambda}{p}\int_{\R^2}|\varphi|^p \, \partial_1^2 \xi \, dx \\
&\quad -\frac 12 \int_{\R^2} |\varphi|^2 \left(\partial_1^4 \xi +  \partial_2^2 \partial_1^2 \xi \right)\, dx-\mbox{Im} \int_{\R^2} \overline{D_2 \varphi} \, |\varphi|^2 \varphi \, \partial_1 \xi \, dx.
\end{align*}
Similarly, we can deduce that
\begin{align*}
I_2&=2\int_{\R^2}|D_2 \varphi|^2\, \partial_2^2 \xi \, dx
+2\,\mbox{Re} \int_{\R^2}\overline{D_1 \varphi} \, D_2 \varphi \, \partial_2\partial_1 \xi \, dx -\frac{(p-2)\lambda}{p}\int_{\R^2}|\varphi|^p \, \partial_2^2 \xi\, dx\\
&\quad -\frac 12 \int_{\R^2} |\varphi|^2 \left(\partial_2^4 \xi +  \partial_1^2 \partial_2^2 \xi \right)\, dx +\mbox{Im} \int_{\R^2} \overline{D_1 \varphi} \, |\varphi|^2 \varphi \, \partial_2 \xi \, dx.
\end{align*}
Consequently, it follows from \eqref{dvt} that
\begin{align*}
\frac{d}{dt}V_{\xi}[\varphi(t)] &= 2\int_{\R^2}\left(|D_1 \varphi|^2\, \partial_1^2 \xi
+2 \, \mbox{Re}\,\overline{D_1 \varphi} \, D_2 \varphi \, \partial_2\partial_1 \xi + |D_2 \varphi|^2\, \partial_2^2 \xi \right)\,dx \\
&\quad -\frac{\lambda(p-2)}{p}\int_{\R^2}|\varphi|^p \left(\partial_1^2 \xi + \partial_2^2 \xi\right)\, dx \\
& \quad -\frac 12 \int_{\R^2} |\varphi|^2 \left(\partial_1^4 \xi +  2 \partial_1^2 \partial_2^2 \xi + \partial_2^4 \xi\right)\, dx,
\end{align*}
and the proof is completed.
\end{proof}

\begin{ack}
The work was supported by the National Science Foundation of China (11771428) and the Postdoctoral Science Foundation of China (Y890125G21). The authors would like to thank Prof. Louis Jeanjean and Dr. Quanguo Zhang for the valuable suggestions and comments which help to improve the manuscript.
\end{ack}

\end{document}